\newcommand\myarrowR[2]{
\draw[arrows = {-Stealth[length=7pt, inset=4pt, width=7pt]},  line width=0.7pt] #1 to[bend right] #2;}
\newcommand\myarrowL[2]{
\draw[arrows = {-Stealth[length=7pt, inset=4pt, width=7pt]},  line width=0.7pt] #1 to[bend left] #2;}
\definecolor{ballcolor}{HTML}{85B8E8} % blue used for ball shading
\definecolor{simplexcolor}{HTML}{DE6C6C}  % red used for simplex color shading
\definecolor{simplexpicturecolor}{HTML}{C73E3E} % darker red for drawing simplex picture
\definecolor{twoptcolor}{HTML}{6ECA4F} % darker green used in pictures of two point sets
\definecolor{twoptcolorshading}{HTML}{ADE48B} % green for two point sets shading
\definecolor{symmbr}{HTML}{FFF594} %symmetry breaking region shading
\definecolor{mygray}{HTML}{EEEEEE} % lighter gray
\definecolor{mymidgray}{HTML}{9E9E9E} % darker gray
\definecolor{mypurple}{HTML}{5D3A9B} % purple for graphing a^s+b^s=1 
\definecolor{myorange}{HTML}{E66100} % orange for graphing a^r+b^r=1 
\newcommand{\arxiv}[1]{%
\href{https://arxiv.org/abs/#1}{ArXiv:#1}}
\newtheorem{theorem}{Theorem}%[section]
\newaliascnt{lemma}{theorem}
\newtheorem{lemma}[lemma]{Lemma}
\newaliascnt{proposition}{theorem}
\newtheorem{proposition}[proposition]{Proposition}
\newaliascnt{corollary}{theorem}
\newaliascnt{conjecture}{theorem}
\newtheorem{conjecture}[conjecture]{Conjecture}
\theoremstyle{definition}
\newcommand{\B}{{\mathbb B}}
\newcommand{\e}{\varepsilon}
\newcommand{\R}{{\mathbb R}}
\newcommand{\Rn}{{{\mathbb R}^n}}
\newcommand{\Rnp}{{{\mathbb R}^{n+1}}}
\DeclareMathOperator{\arcsinh}{arcsinh}
\DeclareMathOperator{\dist}{dist}
\DeclareMathOperator{\diam}{diam}
\DeclareMathOperator{\Length}{Length}
\DeclareMathOperator{\Area}{Area}
\DeclareMathOperator{\Vol}{Vol}
\DeclareMathOperator{\capzero}{Cap_0}
\DeclareMathOperator{\capone}{Cap_1}
\DeclareMathOperator{\captwo}{Cap_2}
\DeclareMathOperator{\capnegtwo}{Cap_{-2}}
\DeclareMathOperator{\capp}{Cap_\mathit{p}}
\DeclareMathOperator{\capq}{Cap_\mathit{q}}
\DeclareMathOperator{\capntwo}{Cap_{\mathit{n}-2}}
\title[Maximizing Riesz capacity ratios]{Maximizing Riesz capacity ratios: conjectures and theorems}
\author{Carrie Clark and Richard S. Laugesen}
\email{carriec2@illinois.edu, Laugesen@illinois.edu}
\address{University of Illinois, Urbana, IL 61801, USA}
\keywords{Logarithmic capacity, Newtonian capacity, electrostatics, potential theory, isodiametric, Brunn--Minkowski}
\subjclass[2020]{\text{Primary 31A15, 31B15}}
\begin{document}

\begin{abstract}
A shape optimization program is developed for the ratio of Riesz capacities $\capq(K)/\capp(K)$, where $K$ ranges over compact sets in $\Rn$. In different regions of the $pq$-parameter plane, maximality is conjectured for the ball, the vertices of a regular simplex, or the endpoints of an interval. These cases are separated by a symmetry-breaking transition region where the shape of maximizers remains unclear. 

On the boundary of $pq$-parameter space one encounters existing theorems and conjectures, including: Watanabe's theorem minimizing Riesz capacity for given volume, the classical isodiametric theorem that maximizes volume for given diameter, Szeg\H{o}'s isodiametric theorem  maximizing Newtonian capacity for given diameter, and the still-open isodiametric conjecture for Riesz capacity. The first quadrant of parameter space contains P\'{o}lya and Szeg\H{o}'s conjecture on maximizing Newtonian over logarithmic capacity for planar sets. The maximal shape for each of these scenarios is known or conjectured to be the ball. 

In the third quadrant, where both $p$ and $q$ are negative, the maximizers are quite different: when one of the parameters is $-\infty$ and the other is suitably negative, maximality is proved for the vertices of a regular simplex or endpoints of an interval. Much more is proved in dimensions $1$ and $2$, where for large regions of the third quadrant, maximizers are shown to consist of the vertices of intervals or equilateral triangles. 
\end{abstract}

\maketitle

\setcounter{section}{-1}

\section{\bf Introduction}
\label{sec:intro}

What shape experiences the greatest change in Riesz capacity when the exponent $p$ in the interaction energy $|x-y|^{-p}$ increases or decreases? Specifically, consider the shape optimization problem of maximizing the Riesz capacity ratio 
\[
\frac{\capq(K)}{\capp(K)} 
\]
for fixed Riesz exponents $p \neq q$, as $K$ varies over all compact sets in $\Rn$. The ratio is scale and translation invariant and so the size or location of $K$ does not matter, only its shape. 

This family of maximization problems connects naturally to geometric quantities, because Riesz $p$-capacity with $p \to -\infty$ gives diameter while as $p \to n$ it yields volume. Thus special cases of the family yield known results and open problems, including: Watanabe's theorem on minimizing Riesz capacity for given volume, P\'{o}lya and Szeg\H{o}'s conjecture on maximizing the ratio of Newtonian over logarithmic capacity for planar sets, the classical isodiametric theorem that maximizes volume for given diameter, the isodiametric theorem maximizing Newtonian capacity for given diameter (which follows from capacitary Brunn--Minkowski), and the still-open isodiametric conjecture for Riesz capacity. 

The optimal set in each of these special cases is known or conjectured to be the ball. But this perfect symmetry is known to break in some regions of the $pq$-parameter plane, in particular when $p=-\infty$ (diameter) for certain $q$ values by recent work of Burchard, Choksi and Hess-Childs \cite{BCH20}. What are the maximizing shapes for the capacity ratio in the symmetry-breaking transition region and beyond?

\subsection*{Overview}
Motivated by these questions, we develop a wide-ranging shape optimization program for $\capq(K)/\capp(K)$, summarized in the next sections by \autoref{fig:pqdiagram1D}, \autoref{fig:pqdiagram2D} and \autoref{fig:pqdiagram} for dimensions $1$, $2$ and $\geq 3$, respectively. 

The figures show in which regions of the $pq$-parameter plane we can conjecture or prove maximality of the capacity ratio for the ball, regular $(n+1)$-point set (vertices of a regular simplex), or endpoints of an interval. Those parameter regions are separated by a ``terra incognita'' transition region where symmetry breaks and the shape of  maximizers is unclear.  

Progress is strongest in low dimensions, where our theorems cover substantial interior regions in \autoref{fig:pqdiagram1D} for $1$ dimension and \autoref{fig:pqdiagram2D} for the $2$-dimensional case. In these figures, a \textbf{two-point set} means the endpoints of a segment and a \textbf{regular three-point set} means the vertices of an equilateral triangle. Results in \autoref{sec:conjectures1} and \autoref{sec:conjectures2} are:
\begin{itemize}
\item[\small $\bullet$] ($1$-dim) interval maximizes when $q=1$: Watanabe's theorem for $-\infty<p<1$ (\autoref{th:onedim}),
\item[\small $\bullet$] ($1$-dim) interval is maximizer when $p \leq -1 < q < 1$ (\autoref{pr:onedim_upperleft}),
\item[\small $\bullet$] ($1$-dim) two-point set is maximal when $-1<p<0, q \leq -1$ (\autoref{pr:onedim_lowermiddle}),
\item[\small $\bullet$] ($2$-dim) disk is not maximal in a certain region where $p<-2, -2<q<-0.86$ (\autoref{th:symmetrybreaking2dim}),
\item[\small $\bullet$] ($2$-dim) regular three-point set is maximal when $p<q \leq -2$ (\autoref{th:2deqtriangle}), 
\item[\small $\bullet$] ($2$-dim) two-point set is maximal when $q<p<0$ and $q \leq -2$ (\autoref{th:2deqtriangle}). 
\end{itemize}
Minimization results follow too, by simply interchanging $p$ and $q$.  

Restricting temporarily to the class of planar three-point sets (vertices of a triangle), \autoref{th:trianglecapformula} evaluates the $p$-capacity explicitly for negative $p$ and \autoref{th:threeptratio} solves the ratio maximization problem completely for $p$ and $q$ both negative. 

Existence of a maximizing set when $p$ is negative is proved in \autoref{th:maximizer}, in all dimensions. We have not found an existence result when $p \geq 0$. 

Regarding the nature of the maximizing set in higher dimensions, \autoref{sec:conjectures} shows:
\begin{itemize}
\item[\small $\bullet$] ($n$-dim) ball is maximal when $-\infty\leq p \leq -2, q \in \{ n-2,n-1,n \}$ (\autoref{th:isodiametric}),
\item[\small $\bullet$] ($n$-dim) regular $(n+1)$-point set is maximal when $p=-\infty, q \leq -2$ (\autoref{pr:isodiamsimplex}),
\item[\small $\bullet$] ($n$-dim) two-point set is maximizer when $p<0, q=-\infty$ (\autoref{le:twopoint}). 
\end{itemize}
Here Riesz capacity with parameter $-\infty$ means the diameter of the set. The first of these three results is Szeg\H{o}'s isodiametric theorem for Newtonian capacity ($q=n-2$), and includes the classical isodiametric theorem for volume ($q=n$). The second and third results provide isodiametral maximization and minimization for Riesz capacity with parameters $q \leq -2$ and $p<0$, respectively.  

Much remains unknown in higher dimensions. Perhaps the most difficult question in the paper is \autoref{conj:riesz}, which asserts that the ball maximizes the capacity ratio when $0 \leq p<q$. P\'{o}lya and Szeg\H{o}'s \autoref{conj:ps45} is the case $(p,q,n)=(0,1,2)$, and Watanabe's theorem is the limiting case with $n-2<p<n$ and $q \nearrow n$. The conjectures are confirmed in \autoref{sec:examples} for some families of explicit examples. 

For negative parameters $p < q \leq -2$, we raise in \autoref{conj:simplex} that the maximizer should be a regular $(n+1)$-point set. So far, we can prove only the planar case, in \autoref{th:2deqtriangle} mentioned above. \autoref{conj:twopoint} claims for $q<p<0$ that the maximizer is a two-point set. 

\subsection*{Capacity, diameter, and volume} We now summarize the definitions of energy and capacity and the limiting cases that yield diameter and volume. References for these definitions and facts are collected in \cite[Appendix A]{CL24b}. 

Consider a compact nonempty set $K \subset \R^n, n \geq 1$. When $p \neq 0$, the Riesz $p$-energy of $K$ is
\[
V_p(K) = 
\begin{cases}
\min_\mu \int_K \! \int_K |x-y|^{-p} \, d\mu(x) d\mu(y) , & p > 0 , \\
\max_\mu \int_K \! \int_K |x-y|^{-p} \, d\mu(x) d\mu(y) , & p < 0 , 
\end{cases}
\]
where the minimum and maximum are taken over all probability measures on $K$, that is, positive unit Borel measures. The minimum or maximum is attained by an ``equilibrium" measure. When $p>0$, the energy is positive or $+\infty$, and if the energy is finite then the equilibrium measure is unique. When $p<0$, the energy is always finite, and it is positive provided $K$ contains more than one point. If $-2<p<0$ then the equilibrium measure is unique. In cases where the equilibrium measure is unique, we call it the $p$-equilibrium measure of $K$. For the empty set we define $V_p(\emptyset)$ to equal $+\infty$ when $p>0$ and $0$ when $p<0$. 

Riesz energy is not defined when $p=0$. Instead, we consider the logarithmic energy 
\[
V_{log}(K) = \min_\mu \int_K \! \int_K \log \frac{1}{|x-y|} \, d\mu(x) d\mu(y) ,
\]
where the minimum is taken over all probability measures on $K$. The energy is greater than $-\infty$ since $|x-y|$ is bounded. The minimum is attained by an equilibrium measure. If the energy is less than $+\infty$ then this logarithmic equilibrium measure is unique and is called the $0$-equilibrium measure. For the empty set, define $V_{log}(\emptyset)=+\infty$. 

The Riesz $p$-capacity of $K$ is 
\begin{equation*} 
\capp(K) = 
\begin{cases}
V_p(K)^{-1/p} , & p \neq 0 , \\
\exp(-V_{log}(K)) , & p = 0 .
\end{cases}
\end{equation*}
The $0$-capacity is also called logarithmic capacity. The capacity of the unit ball is plotted as a function of $p$ in \autoref{fig:ballplots}, for the first few dimensions $n$. Ellipsoidal examples are in \autoref{sec:examples}, and for arbitrary three-point sets in the plane when $p<0$, see \autoref{th:trianglecapformula}. 
\setcounter{figure}{-1} 
\begin{figure}
\begin{center}
\includegraphics[width=0.5\textwidth]{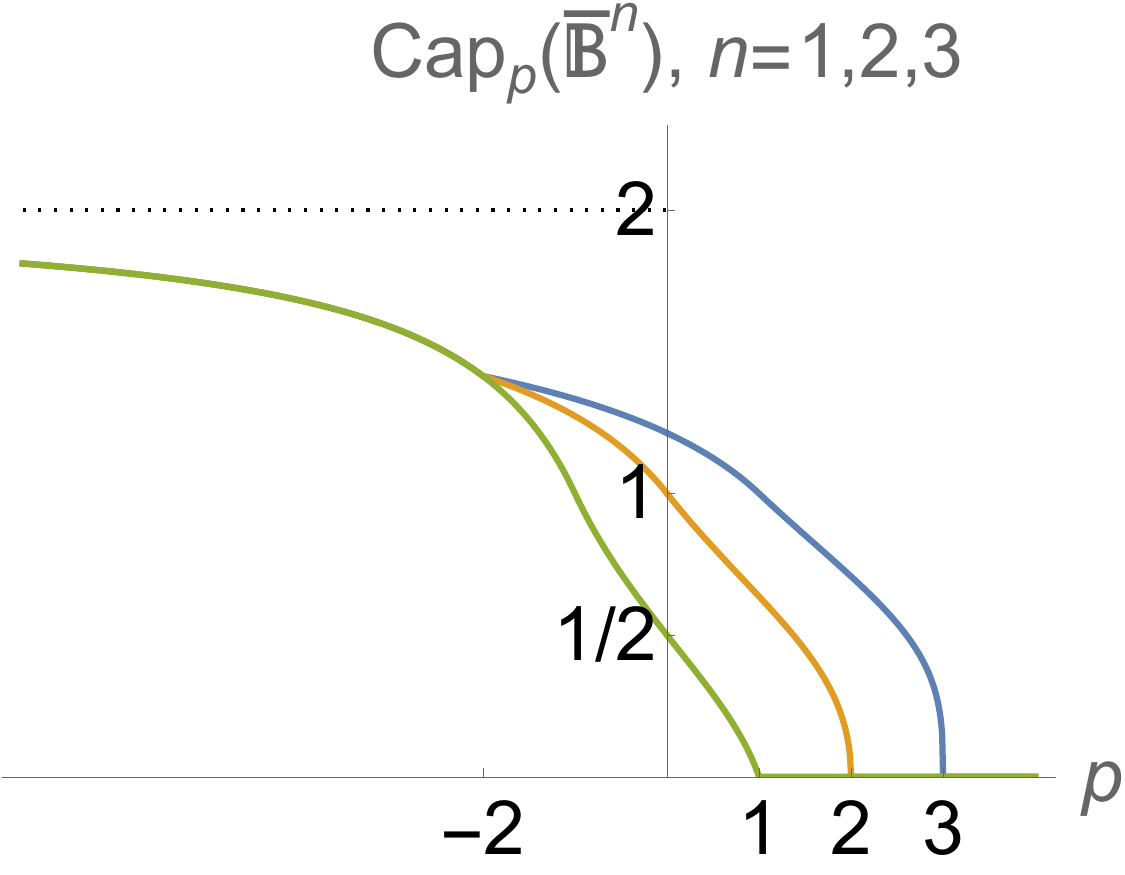}
\end{center}
\caption{\label{fig:ballplots} The Riesz capacity $\capp(\overline{\B}^n)$ of the closed unit ball, plotted as a function of $p$ for the first three values of $n$. The vertical intercept at $p=0$ is the logarithmic capacity $\capzero(\overline{\B}^n)$, with values $1/2, 1, 2e^{-1/2}$ for $n=1,2,3$ respectively. When $p \leq -2$, the capacity is $2^{1+1/p}$ for each $n$, with limiting value $2$ (the diameter of the ball) as $p \to -\infty$. The plots are based on standard capacity formulas \cite[Appendix A]{CL24b}. 
}
\end{figure}

When $p \geq 0$, the Riesz capacity is positive if and only if the energy is finite. When $p<0$, the capacity is positive provided $K$ contains more than one point. Newtonian energy $V_{n-2}(K)$ and Newtonian capacity $\capntwo(K)$ are the special case where 
\[
n \geq 3 \quad \text{and} \quad p=n-2. 
\]
The classical electrostatic capacities are Newtonian when $n=3$ and logarithmic for $n=2$. 

\subsection*{Known properties} Capacity is monotonic with respect to set inclusion, since $K_1 \subset K_2$ implies $\capp(K_1) \leq \capp(K_2)$, and capacity scales linearly with 
\[
\capp(sK)=s \capp(K) , \qquad s>0 .
\]
Diameter and volume are endpoint cases of Riesz capacity by the next result, in which volume (Lebesgue measure) on $\Rn$ is denoted $\Vol_n$. 
\begin{theorem}[Clark and Laugesen \cite{CL24b}] \label{th:Rieszmonotonicity} 
Suppose $K$ is a compact subset of $\Rn, n \geq 1$. 

(a) (Monotonicity) $\capp(K)$ is a decreasing function of $p \in \R$, and is strictly decreasing on the interval where it is positive. 

(b) (Diameter: $p = -\infty$)  
\[
\diam(K) = \lim_{p \to -\infty} \capp(K) ,
\]
and $2^{1/p} \leq \capp(K)/\diam(K) \leq 1$ for all $p <0$. If $K$ is $1$-dimensional, $K \subset \R$, then $2^{1/p} \diam(K) = \capp(K)$ for all $p \leq -1$.

(c) (Volume: $p = n$) 
\[
\Vol_n(K)^{1/n} = |{\mathbb S}^{n-1}|^{1/n} \lim_{p \nearrow n} \frac{\capp(K)}{(n-p)^{1/p}} .
\]

(d) For $p \geq n$, one has $\capp(K)=0$. 
\end{theorem}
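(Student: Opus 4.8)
The plan is to reduce all four parts to a single reformulation of capacity as a power mean of the distance function. For every $p\in\R$ I claim
\[
\capp(K)=\sup_\mu M_{-p}(\mu),\qquad M_\alpha(\mu):=\Big(\iint_{K\times K}|x-y|^{\alpha}\,d\mu(x)\,d\mu(y)\Big)^{1/\alpha},
\]
where $\mu$ runs over probability measures on $K$ and $M_\alpha(\mu)$ is the order-$\alpha$ power mean of $|x-y|$ against the probability measure $\mu\times\mu$, with $M_0$ the geometric mean (the logarithmic case). First I would check this identity by cases: for $p>0$ the decreasing map $t\mapsto t^{-1/p}$ turns the minimum defining $V_p(K)$ into the supremum above, for $p<0$ the increasing map $t\mapsto t^{-1/p}$ preserves the maximum defining $V_p(K)$, and at $p=0$ the formula reproduces $\exp(-V_{log}(K))=\capzero(K)$.

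Part (a) is then immediate from the monotonicity of power means in their order. For fixed $\mu$ the map $p\mapsto M_{-p}(\mu)$ is nonincreasing because $-p$ decreases, and a supremum of nonincreasing functions is nonincreasing, so $\capp(K)$ decreases in $p$. For strictness where the capacity is positive, given $p_1<p_2$ I would take an equilibrium measure $\mu^\ast$ for $p_2$ and use the chain $\mathrm{Cap}_{p_1}(K)\ge M_{-p_1}(\mu^\ast)\ge M_{-p_2}(\mu^\ast)=\mathrm{Cap}_{p_2}(K)$; the middle inequality is strict by the strict power-mean inequality unless $|x-y|$ is $\mu^\ast\times\mu^\ast$-a.e.\ constant, which cannot happen once $\mu^\ast$ has at least two points in its support---precisely the regime in which the capacity is positive.

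For part (b) put $s=-p>0$, so that $\capp(K)=\sup_\mu M_s(\mu)=\sup_\mu\big\||x-y|\big\|_{L^s(\mu\times\mu)}$. Letting $p\to-\infty$ sends the order $s\to\infty$, so $M_s(\mu)\to\mathop{\mathrm{ess\,sup}}_{\mu\times\mu}|x-y|$; taking the supremum over $\mu$ and testing with $\tfrac12(\delta_a+\delta_b)$ for a diameter-realizing pair $a,b$ recovers $\diam(K)$ in the limit. The two inequalities fall out the same way: $M_s(\mu)\le\diam(K)$ for every $\mu$ gives $\capp(K)\le\diam(K)$, while the explicit computation $\iint|x-y|^{s}\,d\big(\tfrac12(\delta_a+\delta_b)\big)^{\otimes2}=\tfrac12(\diam K)^{s}$ gives $\capp(K)\ge 2^{-1/s}\diam(K)=2^{1/p}\diam(K)$. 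For the one-dimensional sharpening, valid when $s\ge1$ (that is, $p\le-1$), I would exploit that $t\mapsto|t|^{s}$ is convex: writing $K\subseteq[0,d]$ with $0,d\in K$ and $g_\mu(x)=\int|x-y|^{s}\,d\mu(y)$, convexity bounds $g_\mu$ by its chord, and integrating against $\mu$ and using $y^{s}\le d^{s-1}y$ together with $(d-y)^{s}\le d^{s-1}(d-y)$ yields $\iint|x-y|^{s}\,d\mu\,d\mu\le d^{s-2}\,2m(d-m)\le\tfrac12 d^{s}$, where $m$ is the mean of $\mu$; the two-point measure attains equality, giving $\capp(K)=2^{1/p}\diam(K)$ exactly.

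Parts (c) and (d) are controlled by the near-diagonal singularity of the kernel. For (d) I would use the layer-cake identity $\iint|x-y|^{-p}\,d\mu\,d\mu=p\int_0^\infty r^{-p-1}I(r)\,dr$ with $I(r)=\int\mu(B(x,r))\,d\mu(x)$, together with the covering lower bound $I(r)\gtrsim r^{n}$ (cover the bounded support by $O(r^{-n})$ pieces $Q_i$ of diameter $r$ and apply Cauchy--Schwarz to $\sum_i\mu(Q_i)^2\ge(\sum_i\mu(Q_i))^2/N$); since $\int_0 r^{\,n-p-1}\,dr=\infty$ for $p\ge n$, every measure has infinite energy and $\capp(K)=0$. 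For (c) the assertion is equivalent to $(n-p)V_p(K)\to|{\mathbb S}^{n-1}|/\Vol_n(K)$ as $p\nearrow n$. The upper bound is routine: testing with normalized Lebesgue measure on $K$ gives $V_p(K)\le|K|^{-2}\int_K\!\int_K|x-y|^{-p}\le|{\mathbb S}^{n-1}|(\diam K)^{n-p}/\big((n-p)|K|\big)$, whence $\limsup_{p\nearrow n}(n-p)V_p(K)\le|{\mathbb S}^{n-1}|/\Vol_n(K)$. The matching lower bound is the main obstacle, since the crude covering estimate of part (d) does not capture the sharp constant. Here I would show that near-minimizers $\mu_p$ must asymptotically equidistribute---the increasingly singular kernel forbids concentration on sets of small volume---and then extract a local density bound upgrading this to $\liminf_{p\nearrow n}(n-p)V_p(K)\ge|{\mathbb S}^{n-1}|/\Vol_n(K)$; making the equidistribution quantitative, in effect a $\Gamma$-convergence of $(n-p)V_p$ to the reciprocal-volume functional, is the delicate step, with the degenerate case $\Vol_n(K)=0$ (both sides vanishing) handled separately.
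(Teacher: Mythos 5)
This theorem is imported from the companion paper \cite{CL24b} and is nowhere proved in the present paper (only part (d) receives a literature pointer), so there is no internal proof to compare against; your proposal has to stand on its own. Parts (a), (b) and (d) do stand. The identity $\capp(K)=\sup_\mu M_{-p}(\mu)$ is a correct restatement of the definition in all three cases $p>0$, $p=0$, $p<0$; power-mean monotonicity then gives (a), and your strictness argument is valid because an equilibrium measure realizing positive capacity must have at least two points in its support, so $|x-y|$ cannot be $(\mu^*\times\mu^*)$-a.e.\ constant. In (b), the two-sided bound $2^{1/p}\diam(K)\le\capp(K)\le\diam(K)$ follows exactly as you say, the limit follows by squeezing, and the chord-plus-convexity computation for $K\subset\R$ with $s=-p\ge 1$ is a complete and rather elegant proof of the one-dimensional equality $\capp(K)=2^{1/p}\diam(K)$. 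The layer-cake and covering estimate in (d) is the standard Hausdorff-dimension argument and is sound.

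Part (c), however, contains a genuine gap, which you flag but do not close. After the correct reduction to showing $(n-p)V_p(K)\to|{\mathbb S}^{n-1}|/\Vol_n(K)$ as $p\nearrow n$, your test-measure computation gives only $\limsup_{p\nearrow n}(n-p)V_p(K)\le|{\mathbb S}^{n-1}|/\Vol_n(K)$. The matching lower bound must hold uniformly over \emph{all} probability measures $\mu$ on $K$ (the minimizers move with $p$) and with the sharp constant, and for this you offer a plan --- ``near-minimizers equidistribute,'' ``$\Gamma$-convergence to the reciprocal-volume functional'' --- rather than an argument. The obstruction is real: the quantitative tools you invoke lose constants. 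For instance, refining your part-(d) estimate via $\int_{\Rn}\mu(B(z,\rho))\,dz=\omega_n\rho^n$ and Cauchy--Schwarz (where $\omega_n=|{\mathbb S}^{n-1}|/n$ is the unit ball volume and $K_\epsilon$ denotes the $\epsilon$-neighborhood of $K$) yields $(\mu\times\mu)\{|x-y|<r\}\ge \omega_n(r/2)^n/|K_{r/2}|$, hence only $\liminf_{p\nearrow n}(n-p)V_p(K)\ge|{\mathbb S}^{n-1}|/\bigl(2^n\Vol_n(K)\bigr)$, off by the factor $2^n$; your cruder covering bound does not see $\Vol_n(K)$ at all. Removing that factor --- proving that concentration can never beat uniform spreading in the limit --- is the entire analytic content of part (c), and nothing in the sketch indicates how to do it. (Only the degenerate case $\Vol_n(K)=0$ genuinely follows from the lossy bound, since then $|K_\epsilon|\to 0$.) So as written, your proof establishes (a), (b), (d) and one half of (c), while the other half of (c) remains an unproven claim.
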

Part (d) is well known by Hausdorff dimension estimates \cite[Theorem 4.3.1]{BHS19}. 

\subsection*{$k$-point sets} By a $k$-point set we mean a finite set in Euclidean space containing exactly $k$ elements. The set is \textbf{regular} if its points form the vertices of a regular simplex. The $p$-equilibrium measure on a regular $k$-point set is equidistributed with measure $1/k$ at each point, when $p<0$, and the set has $p$-capacity $\left( (k-1)/k \right)^{-1/p} d$ where $d$ is the diameter. See the regular and nonregular examples in \autoref{sec:threepoint}. 

\subsection*{Related literature} 

 The ratio of Riesz capacities studied in this paper resembles an attraction--repulsion energy, in some respects. Recent work by Carrillo, Figalli, and Patacchini \cite{CFP17} and Frank and Lieb \cite{FL18} can serve as an introduction to that area. There the goal is to minimize an energy whose kernel encourages short-range repulsion and long-range attraction of particles. A ``difference of power laws'' kernel $|x-y|^\alpha/\alpha-|x-y|^\beta/\beta$ where $\alpha>\beta>-n$ is a key model example. 
 
The rich theory of that model has been explored in particular by Lim and McCann \cite{LM21}, later joined by Davies \cite{DLM22,DLM23}. Those papers establish in certain regions of the $\alpha\beta$-parameter plane that the minimizer is the uniform measure on either a sphere or the vertices of a regular simplex --- the same extremal shapes expected in the current paper. The link is direct when $\alpha \to \infty$ (the ``hard confinement limit''), for then the long-range potential imposes a diameter constraint that corresponds in this paper to the limit $p \to -\infty$. 
 
A crucial difference, though, is that extremizing the attractive--repulsive energy involves optimizing with respect only to a single measure, whereas in this paper, extremizing the ratio of $q$-capacity to $p$-capacity requires simultaneous treatment of different measures in the numerator and denominator. 

Other papers that have indirectly inspired the current research include Jerison's solution of the Minkowski problem for Newtonian capacity \cite{J96}, which relies on Borell's capacitary Brunn--Minkowski inequality \cite{B83}, Xiao's result \cite{X17}  on the P\'{o}lya--Szeg\H{o} double-disk conjecture for minimizing Newtonian capacity among convex sets of given surface area, Laugesen's extremal result for Riesz capacity under linear transformations \cite{L22}, and Solynin and Zalgaller's beautiful theorem that among $N$-gons of given area, the regular $N$-gon minimizes logarithmic capacity \cite{SZ04}.

\section{\bf Riesz vs.\ Riesz: capacity ratios in $1$ dimension}
\label{sec:conjectures1}

First, we show Riesz capacity is minimal for the interval among sets of given length (Lebesgue measure), which can be regarded as a $1$-dimensional version of Watanabe's theorem. It is an edge case of \autoref{conj:riesz} later, as indicated in \autoref{fig:pqdiagram1D}. The high-numbered conjectures and results in that figure apply in all dimensions, and are stated later in \autoref{sec:conjectures}. 
\begin{theorem}[Maximizing length divided by $p$-capacity, i.e.\ minimizing $p$-capacity for given length] \label{th:onedim}
Let $p<1$. The ratio of length to $p$-capacity is maximal for a closed interval among compact sets $K \subset \R$, meaning  
\begin{equation} \label{eq:caplength}
\capp(K) \geq \frac{\capp(\overline{\B}^1)}{2} \Length(K) .
\end{equation}
When $K$ has positive length, equality holds in \eqref{eq:caplength} if and only if $K=J \cup Z$ for some nondegenerate closed interval $J$ where  
\[
\begin{cases}
\text{$Z$ has inner $p$-capacity zero,} & 0 \leq p < 1 , \\
\text{$Z$ is empty,} & p < 0 .
\end{cases}
\]
When $K$ has zero length, equality holds in \eqref{eq:caplength} if and only if $K$ has $p$-capacity zero, which when $p < 0$ means that $K$ is either empty or a singleton.
\end{theorem}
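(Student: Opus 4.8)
\subsection*{Proof proposal}

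The plan is to compare an arbitrary compact $K\subset\R$ of length $L := \Length(K) > 0$ with the interval $I := [0,L]$, which satisfies $\capp(I) = \tfrac{1}{2}\capp(\overline{\B}^1)\,L$ by the scaling law, so that \eqref{eq:caplength} is exactly $\capp(K)\ge\capp(I)$. After translating so that $\min K = 0$, I would introduce the distribution function $\Phi(x) = \Length\big(K\cap[0,x]\big)$, which is nondecreasing, $1$-Lipschitz, and maps $[0,\diam K]$ onto $I$. First I would record that its generalized inverse $\psi(t) = \inf\{x : \Phi(x)\ge t\}$ maps $I$ into $K$ (the infimum lands at the left endpoint of a complementary gap, which belongs to the closed set $K$) and is distance-expanding: from $\Phi(\psi(t)) = t$ together with the $1$-Lipschitz bound one gets $t - s = \Phi(\psi(t)) - \Phi(\psi(s)) \le \psi(t) - \psi(s)$ for $s < t$, that is $|\psi(s)-\psi(t)| \ge |s-t|$.

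The core estimate then follows by transplanting the equilibrium measure of the interval. Let $\nu$ be the $p$-equilibrium measure of $I$ and set $\mu := \psi_*\nu$, a probability measure on $K$. Because $\psi$ expands distances and the kernel $r\mapsto r^{-p}$ is monotone, the energy of $\mu$ is controlled by that of $\nu$ in the favorable direction in each regime: for $p<0$ the kernel increases in $r$, so $\iint_K|x-y|^{-p}\,d\mu\,d\mu = \iint_I|\psi(s)-\psi(t)|^{-p}\,d\nu\,d\nu \ge \iint_I|s-t|^{-p}\,d\nu\,d\nu = V_p(I)$, whence $V_p(K)=\max\ge V_p(I)$; for $0\le p<1$ the kernel (or $-\log r$ when $p=0$) decreases in $r$, so the same computation gives $V_p(K)=\min\le V_p(I)$. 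Since $\capp=V_p^{-1/p}$, with exponent $-1/p>0$ when $p<0$ and $-1/p<0$ when $p>0$, while $\capzero=\exp(-V_{log})$, in each case the monotone relation between capacity and energy converts the two energy comparisons into the single conclusion $\capp(K)\ge\capp(I)$, which is \eqref{eq:caplength}. This half of the argument is short and uniform in $p$; the work is all in the rigidity.

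For the equality analysis with $L>0$, tightness forces two things: $\mu$ must be a $p$-equilibrium measure of $K$, and, by strict monotonicity of the kernel, $|\psi(s)-\psi(t)| = |s-t|$ for $\nu\otimes\nu$-almost every $(s,t)$. When $0\le p<1$ the equilibrium measure $\nu$ of the interval has full support, so the monotone map $\psi$ must be the identity; hence $[0,L]\subseteq K$, and since $\Length(K) = L$ the remainder $Z := K\setminus[0,L]$ has length zero. The equality $\capp(K) = \capp([0,L])$ then forces $Z$ to have inner $p$-capacity zero, by the standard fact that adjoining a set of positive inner capacity strictly raises capacity; conversely such a $Z$ changes nothing, giving the stated characterization. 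When $p<0$ the same almost-everywhere isometry yields $\diam K = L$, and then $K=[0,L]$: any point of $K$ outside $[0,L]$ would sit at positive distance from the interval and strictly increase $V_p(K)$, so $Z$ must be empty. The zero-length case is immediate, since the right side of \eqref{eq:caplength} vanishes and $\capp(K)=0$ is equivalent to $p$-capacity zero, which for $p<0$ means $K$ is empty or a singleton.

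The main obstacle I anticipate is the rigidity in the band $-1 < p < 0$. There the energy-maximizing measure $\nu$ need not have full support (at $p=-1$ it collapses to equal point masses at the two endpoints of $I$), so the clean ``$\psi = \mathrm{id}$'' deduction available for $0\le p<1$ is unavailable, and one must instead extract $\diam K = L$ from the endpoint behavior of $\nu$ and then upgrade this to $K=[0,L]$ via strict monotonicity of $V_p$ under adjoining distant points. Pinning down the support of the $p$-equilibrium measure on an interval for $-1<p<0$, and making the strict-monotonicity step quantitative, is the delicate part; by contrast the range $p\le -1$ is routine, since there $\capp(K) = 2^{1/p}\diam(K)$ by \autoref{th:Rieszmonotonicity}(b) reduces everything to $\Length(K)\le\diam(K)$, and the range $0\le p<1$ is handled cleanly as above.
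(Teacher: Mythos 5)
Your proof of inequality \eqref{eq:caplength} is correct and follows essentially the paper's own route: the distribution function $\Phi$, its expanding right inverse $\psi$ mapping $I$ into $K$, and the comparison of energies under pushforward. Your disposal of the range $p\le -1$ via $\capp(K)=2^{1/p}\diam(K)$ from \autoref{th:Rieszmonotonicity}(b) is also correct (and is a shortcut the paper does not bother to take). The equality analysis on $-1<p<1$, however, contains two genuine gaps.

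First, the claim ``$\psi$ must be the identity'' is false, even among equality configurations. Take $K=\{0\}\cup[1,3]$: it has length $2$, and for $0\le p<1$ it \emph{is} an equality case of the theorem (here $J=[1,3]$ and $Z=\{0\}$ has inner $p$-capacity zero), yet $\psi(t)=1+t$ for $t\in(0,2]$ and $[0,2]\not\subset K$. What the almost-everywhere isometry actually yields --- after propagating equality from pairs near the two endpoints of $I$ to all intermediate pairs by means of the expanding inequality, a step your sketch omits --- is $\psi(t)=x_*+t$ on $(0,L)$ for some $x_*\ge 0$, hence a \emph{translate} $J=x_*+[0,L]$ contained in $K$. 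This error is fixable, but the statement as written cannot be proved.

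Second, and more seriously, the crux of the rigidity is missing in both bands. For $0\le p<1$ you appeal to the ``standard fact'' that adjoining a set of positive inner capacity strictly raises capacity; as a general principle that is false: for logarithmic capacity in the plane, adjoining the open unit disk to the unit circle adds a set of positive inner capacity and leaves the capacity unchanged. What must actually be proved is specific to the one-dimensional geometry: the potential $y\mapsto\int_J|x-y|^{-p}\,d\gamma(x)$ of the equilibrium measure $\gamma$ of the interval $J$ equals $V_p(J)$ on $J$ and is \emph{strictly} smaller than $V_p(J)$ at every point of $\R\setminus J$ when $0< p<1$ (similarly for the log kernel), and \emph{strictly} larger when $-1<p<0$. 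The paper establishes exactly this, by a first-order perturbation of $\gamma$ against a measure on the leftover set $Z$ combined with convexity (resp.\ concavity) of the kernel off $J$ and the behavior of the potential at infinity; that single argument settles both $0\le p<1$ and $-1<p<0$. In your proposal this step is asserted without proof in the first band and explicitly left open in the second (your ``delicate part''), so the equality characterization is not established anywhere in $-1<p<1$.
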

See \autoref{sec:onedimproof} for the proof.

%----------------------------------------------------------------------------------------------------------------------------------

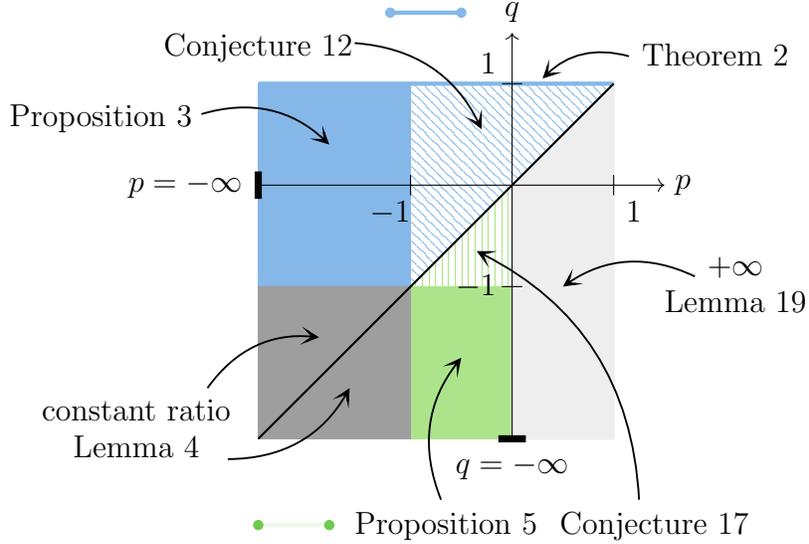
\begin{figure}%[!htb]
\begin{tikzpicture}[scale=1.35]

\def\n{1}
\def\pmax{1.5}
\def\pmin{-2.5}
\def\qmax{1.5}
\def\qmin{-2.5}
        
%%%%%%%%       shading and lines         %%%%%%%%%

% interval conjecture
\filldraw[pattern={Lines[angle=-45,distance=2.5pt]}, pattern color=ballcolor, draw=none] (0,0) -- (\n, \n) -- (-1, \n) -- (-1, -1) -- cycle;
% interval theorem
\filldraw[ballcolor , draw=ballcolor] (\pmin, \n) -- (-1, \n) -- (-1, -1) -- (\pmin,-1) -- cycle;
% two-point set conjecture
\filldraw[pattern={Lines[angle=90,distance=2.5pt]}, pattern color=twoptcolorshading, draw=none] (0,0) -- (-1, -1) -- (0, -1);
% two-point set theorem
\filldraw[color=twoptcolorshading, draw=twoptcolorshading] (-1,\qmin) -- (0,\qmin) -- (0,-1) -- (-1,-1) -- cycle;
% corner region where the ratio is constant
\filldraw[color=mymidgray, draw=mymidgray] (\pmin,\qmin) -- (-1,\qmin) -- (-1,-1) -- (\pmin,-1) -- cycle;
% region where p>max(0,q) (max of ratio is infty)
\filldraw[color=mygray, draw=mygray,  thick] (\n,\n) -- (\n,\qmin) -- (0,\qmin) -- (0,0) -- cycle;

% solid line at height n where interval is max
\draw[ballcolor, ultra thick]  (\pmin,\n) -- (\n,\n);

%%%%%%%%%        labels and arrows         %%%%%%%%%

% interval labels
% conjecture
\draw (\pmin,1.6)   node[below] {\autoref{conj:riesz}};
\myarrowL{(-\n-0.55,1.4)}{(-.3,\n-0.45)}
% proposition (upper left)
\draw (\pmin-1.55,0.9)   node[below] {\autoref{pr:onedim_upperleft}};
\myarrowL{(\pmin-0.56,0.7)}{(\pmin+0.7,0.4)}
% theorem (top line)
\draw (\n+1,1.5)   node[below] {\autoref{th:onedim}};
\myarrowR{(\n+0.15,1.27)}{(\n-0.7,\n+0.05)}
% interval picture
\def\tpx{-1.2} % x coordinate of left endpoint
\def\tpy{\n+0.7} % y coordinate of left endpoint
\def\tpl{0.7} % length
\draw[ballcolor, ultra thick] (\tpx,\tpy) -- (\tpx+\tpl,\tpy);
\filldraw[ballcolor] (\tpx,\tpy) circle (0.25ex);
\filldraw[ballcolor] (\tpx+\tpl,\tpy) circle (0.25ex);

% two-point set labels
% conjecture
\draw (1.4,\qmin-1.1)   node[above] { \autoref{conj:twopoint}};
\myarrowR{(1.25,\qmin-0.6)}{(-0.35,-0.65)}
% theorem
\draw (-0.65,\qmin-1.1)   node[above] { \autoref{pr:onedim_lowermiddle}};
\myarrowL{(-0.7,\qmin-0.6)}{(-0.5,-1.7)}
% two point set picture
\def\tpx{-2.5} % x coordinate of left endpoint
\def\tpy{\qmin-0.85} % y coordinate of left endpoint
\def\tpl{0.7} % length
\draw[twoptcolor!12, ultra thick] (\tpx,\tpy) -- (\tpx+\tpl,\tpy);
\filldraw[twoptcolor] (\tpx,\tpy) circle (0.25ex);
\filldraw[twoptcolor] (\tpx+\tpl,\tpy) circle (0.25ex);

% region where ratio is constant
\draw (\pmin-1.2,-2) node[below,align=center] {constant ratio\\\autoref{le:onedim_lowerleft}};
\myarrowL{(\pmin-0.5,-2)}{(\pmin+0.6,-1.5)}
\myarrowR{(\pmin-0.3,-2.7)}{(\pmin+0.9,-2)}

% region where max is infinity
\draw (\n+1.2,-0.6)  node[below,align=center] {$+\infty$\\ \autoref{le:maxinfty}};
\myarrowR{(\n+0.8,-0.9)}{(\n-0.5,-1)}

%%%%%%%%            axes         %%%%%%%%%

% line p=q
\draw[black,  thick]  (\qmin,\pmin) to (\n,\n);
% p-axis
\draw[->] (\pmin,0) to (\pmax,0);
\draw (\pmax,0)  node[right] {$p$};
% q-axis 
\draw[->] (0,\qmin) to (0,\qmax);
\draw (0,\qmax)  node[above] {$q$};
% p-axis labels
\draw[-] (\n,-0.1) to (\n,0.1);
\draw (\n+0.2, -0.05) node[below] {$\n$};
\draw[-] (-1,-0.1) to (-1,0.1);
\draw (-1.2, -0.05) node[below] {$-1$};
% q-axis labels
\draw[-] (-0.1,\n) to (0.1,\n);
\draw (-0.05,\n+0.2) node[left] {$\n$};
\draw[-] (-0.1,-1) to (0.1,-1);
\draw (-0.05,-1) node[left] {$-1$};
% rectangle at p=-infty
\draw (\pmin-0.05,0)  node[left] {$p=-\infty$};
\draw[black,fill=black] (\pmin-0.03,-0.13)  rectangle  (\pmin+0.03,0.13) ;
% rectangles at q=-infty
\draw (0,\qmin-0.05)  node[below] {$q=-\infty$};
\draw[black,fill=black] (-0.13,\qmin-0.03)  rectangle  (0.13,\qmin+0.03) ;
\end{tikzpicture}
\caption{\label{fig:pqdiagram1D} ($n=1$) Maximizing $\capq(K)/\capp(K)$ for $K \subset \R$. Solid regions and segments on the boundary indicate rigorous results. Striped regions are conjectural. Blue corresponds to the interval and green to the two-point set.}
\end{figure}

%----------------------------------------------------------------------------------------------------------------------------------

Next, by a short argument in \hyperlink{proofof_pr:onedim_upperleft}{\autoref*{sec:additionalproofs}} we resolve the isodiametric problem for Riesz capacity in $1$ dimension. 
\begin{proposition}[Maximizing $q$-capacity given diameter] \label{pr:onedim_upperleft}
Let $p \leq -1 < q < 1$. Among compact sets $K \subset \R$ that contain more than one point, the ratio $\capq(K)/\capp(K)$ is maximal if and only if $K$ is an interval. Equivalently, 
\[
\capq(K) \leq \frac{\capq(\overline{\B}^1)}{2} \diam(K) , \qquad q \in (-1,1) ,
\]
with equality if and only if $K$ is an interval. 
\end{proposition}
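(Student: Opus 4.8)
The plan is to exploit the fact that for $p \le -1$ the denominator depends only on the diameter, converting the ratio problem into a one-dimensional isodiametric inequality for $\capq$. Indeed, \autoref{th:Rieszmonotonicity}(b) gives $\capp(K) = 2^{1/p}\diam(K)$ for every compact $K \subset \R$ when $p \le -1$, so that
\[
\frac{\capq(K)}{\capp(K)} = 2^{-1/p}\,\frac{\capq(K)}{\diam(K)} .
\]
Maximizing the left side is thus equivalent to maximizing $\capq(K)/\diam(K)$, that is, to proving the displayed bound $\capq(K) \le \tfrac12 \capq(\overline{\B}^1)\diam(K)$ together with its equality case. The hypothesis $-1 < q$ is exactly what keeps this problem nondegenerate: for $q \le -1$ the same formula would make $\capq$ a function of the diameter alone, and the maximum would be attained by every set sharing the two extreme points --- this is the constant-ratio regime of \autoref{le:onedim_lowerleft}.

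For the inequality itself, let $a = \min K$ and $b = \max K$, both attained since $K$ is compact, and set $J = [a,b]$, a nondegenerate interval with $\diam(J) = b-a = \diam(K) =: d$. Because $K \subset J$, monotonicity of capacity under inclusion gives $\capq(K) \le \capq(J)$. Since $\overline{\B}^1 = [-1,1]$ has length $2$ and capacity scales linearly, $\capq(J) = \tfrac{d}{2}\capq(\overline{\B}^1)$, which yields the claimed bound. The same scaling computation shows equality when $K$ is itself an interval, establishing the easy direction of the equivalence.

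The substance of the statement is the converse: if $K$ is not an interval then the bound is strict, which amounts to strict monotonicity of $\capq$ under removal of an open subinterval. If $K \subsetneq J$, then, since $a,b \in K$ and $K$ is closed, $J \setminus K$ contains a nondegenerate open interval $I$. I would argue via the equilibrium measure. For $-1 < q < 1$ the $q$-equilibrium measure $\mu_J$ of $J$ is unique --- covering the ranges $-2<q<0$, $q=0$, and $0<q<1$ of the uniqueness facts recorded in the setup, with finiteness of the interval's energy handling $q>0$ --- and it has \emph{full support} on $J$. Suppose $\capq(K) = \capq(J)$; since $\capq$ is a strictly monotone function of the energy, this is equivalent to equality of the energies on $K$ and $J$. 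An energy-extremal probability measure $\mu_K$ on $K$ exists (finite-energy, as $\capq(K)=\capq(J)>0$, by semicontinuity of the energy and weak-$*$ compactness), and it is then extremal on $J$ as well, so uniqueness forces $\mu_K = \mu_J$. This contradicts $\mu_J(I) > 0$, because $\mu_K$ is supported in $K$ while $I \cap K = \emptyset$. Hence $\capq(K) < \capq(J)$, and equality in the isodiametric bound forces $K = J$.

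The one ingredient requiring genuine work is the full-support claim for $\mu_J$, which is also where the hypothesis $q > -1$ enters decisively and is the main obstacle. For $0 \le q < 1$ this is classical (the equilibrium measure of an interval is an explicit arcsine- or Beta-type density charging all of $J$). For $-1 < q < 0$, where $\mu_J$ instead \emph{maximizes} the energy of the continuous kernel $|x-y|^{-q}$, the threshold $q = -1$ is precisely where the maximizer ceases to be spread across $J$ and collapses onto the two endpoints $\{a,b\}$, consistent with the constant-ratio phenomenon above; for $q$ just above $-1$ it still charges every subinterval. I would confirm full support either from the explicit extremal density or by a short variational perturbation argument exploiting strict concavity of $t \mapsto t^{-q}$ for $-1 < q < 0$: an open interval carrying no mass could be perturbed to strictly improve the energy, contradicting extremality. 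Pinning down this transition cleanly is the crux of the proof.
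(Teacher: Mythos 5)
Your proposal is correct and follows essentially the same route as the paper: reduce to an isodiametric bound using $\capp(K)=2^{1/p}\diam(K)$ for $p\leq -1$, obtain the inequality from monotonicity of capacity under inclusion, and settle the equality case by uniqueness of the $q$-equilibrium measure together with the fact that the interval's equilibrium measure has full support. The only difference is that the paper disposes of the full-support step by citing the explicit equilibrium measure of the interval from \cite[Appendix A]{CL24b}, whereas you flag it as the crux and sketch it via classical densities or a perturbation argument --- that citation is exactly what closes your one remaining gap.
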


The bottom left region of \autoref{fig:pqdiagram1D} is uninteresting in $1$ dimension, since by the next lemma, the ratio of capacities is the same for all sets. 
\begin{lemma} \label{le:onedim_lowerleft}
Let $p,q \leq -1$. If $K \subset \R$ is compact and contains more than one point then 
\[
\frac{\capq(K)}{\capp(K)} = 2^{1/q-1/p} .
\]
\end{lemma}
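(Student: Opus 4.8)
The plan is to reduce everything to the exact diameter formula recorded in \autoref{th:Rieszmonotonicity}(b). That part asserts that for a compact $1$-dimensional set $K \subset \R$ one has $\capp(K) = 2^{1/p}\diam(K)$ for \emph{every} $p \leq -1$. Since this identity is valid for all exponents at most $-1$, I would simply apply it twice, once to $p$ and once to $q$, both of which lie in $(-\infty,-1]$ by hypothesis.

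First I would observe that $K$ containing more than one point forces $\diam(K) > 0$, so that $\capp(K)$ and $\capq(K)$ are both positive and the ratio $\capq(K)/\capp(K)$ is a well-defined positive number. Then the two instances of the formula read $\capq(K) = 2^{1/q}\diam(K)$ and $\capp(K) = 2^{1/p}\diam(K)$. Dividing, the common factor $\diam(K)$ cancels and the powers of $2$ combine:
\[
\frac{\capq(K)}{\capp(K)} = \frac{2^{1/q}\diam(K)}{2^{1/p}\diam(K)} = 2^{1/q - 1/p},
\]
which is exactly the claimed value.

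There is essentially no obstacle at this level: all the difficulty has been front-loaded into the exact evaluation $\capp(K) = 2^{1/p}\diam(K)$ for $p \leq -1$, which is the genuinely nontrivial input and is supplied by \autoref{th:Rieszmonotonicity}. Geometrically, once $p \leq -1$ the $p$-equilibrium measure on a $1$-dimensional set places all its mass at the two extreme points realizing the diameter, so the capacity sees only the diameter and its dependence on $p$ degenerates to the explicit scalar $2^{1/p}$; consequently the ratio depends on nothing but $p$ and $q$ and the shape of $K$ drops out entirely. The sole point requiring care is the hypothesis ``more than one point,'' which is needed only to ensure $\diam(K)\neq 0$ so that the denominator is nonzero.
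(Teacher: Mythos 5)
Your proof is correct and is precisely the paper's own argument: the paper likewise proves this lemma by recalling from \autoref{th:Rieszmonotonicity}(b) that $\capp(K) = 2^{1/p}\diam(K)$ for $p \leq -1$, applying the same identity to $q$, and dividing. Nothing is missing; your added remarks on positivity of the diameter and on the equilibrium measure concentrating at the diametral points are consistent with (and slightly more explicit than) the paper's one-line proof.
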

For the proof, simply recall from \autoref{th:Rieszmonotonicity}(b) that $\capp(K)=2^{1/p} \diam(K)$, when $p \leq -1$, and similarly for the $q$-capacity. 

Finally, in $1$ dimension we prove a substantial portion of the ``two-point'' \autoref{conj:twopoint}. 
\begin{proposition}[Minimizing $p$-capacity for given diameter] \label{pr:onedim_lowermiddle}
Let $-1<p<0, q \leq -1$. Among compact sets $K \subset \R$ that contain more than one point, 
\[
\frac{\capq(K)}{\capp(K)} \leq 2^{1/q-1/p} 
\]
with equality if and only if $K$ contains exactly two points. Equivalently, 
\[
\capp(K) \geq 2^{1/p} \diam(K) , \qquad p \in (-1,0) , 
\]
with equality if and only if $K=\{a,b\}$ for some $a<b$. 
\end{proposition}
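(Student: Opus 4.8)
The plan is to reduce the ratio statement to the single-capacity inequality $\capp(K) \geq 2^{1/p}\diam(K)$ and then analyze when it is sharp. Since $q \leq -1$, \autoref{th:Rieszmonotonicity}(b) gives $\capq(K) = 2^{1/q}\diam(K)$ for every compact $K \subset \R$ containing more than one point. Dividing by $\capp(K)$ shows that the bound $\capq(K)/\capp(K) \leq 2^{1/q-1/p}$ is equivalent to $\capp(K) \geq 2^{1/p}\diam(K)$, with matching equality cases. Thus it suffices to prove this last inequality for $-1<p<0$ and to identify when equality occurs.

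For the inequality itself, I would write $a = \min K$, $b = \max K$, and $d = \diam(K) = b-a$, and test the maximization defining $V_p(K)$ against the balanced two-point measure $\mu_0 = \tfrac12(\delta_a+\delta_b)$. Because $-p>0$, the self-interaction terms $|a-a|^{-p}$ and $|b-b|^{-p}$ vanish, leaving $\int\!\int |x-y|^{-p}\,d\mu_0\,d\mu_0 = \tfrac12 d^{-p}$, so $V_p(K) \geq \tfrac12 d^{-p}$. Since $p<0$ gives $-1/p>0$, the map $t\mapsto t^{-1/p}$ is increasing on positives and yields $\capp(K) = V_p(K)^{-1/p} \geq (\tfrac12 d^{-p})^{-1/p} = 2^{1/p} d$, as desired. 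Note this step uses only $p<0$.

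It remains to pin down equality, and here the restriction $-1<p<0$ is essential. If $K=\{a,b\}$ then every probability measure is $t\delta_a+(1-t)\delta_b$ with energy $2t(1-t)d^{-p}$, maximized at $t=\tfrac12$, so $V_p(K)=\tfrac12 d^{-p}$ and equality holds. Conversely, suppose $K$ contains a further point $x_0$; since $K\subset[a,b]$ in one dimension, necessarily $x_0\in(a,b)$, so $u:=|x_0-a|$ and $v:=|x_0-b|$ are positive with $u+v=d$. The potential of $\mu_0$ at $x_0$ is $U^{\mu_0}(x_0)=\tfrac12(u^{-p}+v^{-p})$, and because $0<-p<1$ the power $t\mapsto t^{-p}$ is strictly subadditive, giving $u^{-p}+v^{-p}>(u+v)^{-p}=d^{-p}$ and hence $U^{\mu_0}(x_0) > \tfrac12 d^{-p}$. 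Perturbing $\mu_0$ toward $\delta_{x_0}$ along $\mu_t=(1-t)\mu_0+t\delta_{x_0}$ then strictly raises the energy for small $t>0$: the self-energy of $\delta_{x_0}$ vanishes, so $I(\mu_t)=(1-t)^2\tfrac12 d^{-p}+2t(1-t)U^{\mu_0}(x_0)$ has $t$-derivative $2\big(U^{\mu_0}(x_0)-\tfrac12 d^{-p}\big)>0$ at $t=0$. Thus $V_p(K)>\tfrac12 d^{-p}$, the capacity inequality is strict, and equality forces $K=\{a,b\}$.

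The main obstacle I anticipate is not any single estimate but keeping the equality analysis honest: one must confirm that in one dimension every point of $K$ beyond the diameter endpoints lies strictly inside $(a,b)$, and that strict subadditivity of $t\mapsto t^{-p}$ genuinely fails at $p=-1$ (where $t\mapsto t$ is additive). This is exactly why the clean ``exactly two points'' characterization holds only for $-1<p<0$ and degenerates into ``all sets'' once $p\leq-1$, consistent with \autoref{th:Rieszmonotonicity}(b). The perturbation step should be written out carefully, but it reduces to the elementary quadratic above and does not require invoking uniqueness of the equilibrium measure.
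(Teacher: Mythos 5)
Your proof is correct and follows essentially the same route as the paper: reduce to $\capp(K) \geq 2^{1/p}\diam(K)$ via \autoref{th:Rieszmonotonicity}(b), verify equality for two-point sets, and prove strictness for a third interior point by the first-order perturbation $(1-t)\tfrac12(\delta_a+\delta_b)+t\delta_{x_0}$, which is exactly the paper's trial family $\mu_s$. Your subadditivity inequality $u^{-p}+v^{-p}>(u+v)^{-p}$ is the scale-invariant form of the paper's normalized comparison $a^{-p}+b^{-p}>a+b=1$, so the two arguments coincide in substance.
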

\hyperlink{proofof:pr:onedim_lowermiddle}{\autoref*{sec:additionalproofs}} shows the proof. The inequality itself is easy, while the ``only if'' equality statement requires a little care.

\section{\bf Riesz vs.\ Riesz for $2$ dimensions}
\label{sec:conjectures2}

In $2$ dimensions we can handle significant parts of the parameter plane. \autoref{fig:pqdiagram2D} summarizes the findings. Again, high-numbered conjectures and results apply in all dimensions and are found in the next section. 

%----------------------------------------------------------------------------------------------------------------------------------

\begin{figure}
\begin{tikzpicture}[scale=1.25]

\def\n{2}
\def\pmax{3}
\def\pmin{-4}
\def\qmax{3}
\def\qmin{-4}

%%%%%%%%%%.        shading.       %%%%%%%%%%%

% ball  gradient region
\filldraw[top color=ballcolor!40, bottom color=white, middle color=ballcolor!5, shading angle=10, draw=none] (\pmin,\n-2.4) -- (\pmin,\n-2) -- (0,\n-2) -- (-0.75,-0.75) -- (-\n,-0.8);
\filldraw[pattern={Lines[angle=-45,distance=3pt,line width=2.4pt, xshift=2pt]}, pattern color=white, draw=none] (\pmin+0.02,\n-3) -- (\pmin-0.2,\n-2) -- (0,\n-2) -- (-0.65,-0.65);
\filldraw[pattern={Lines[angle=-45,distance=3pt,line width=2.4pt, xshift=2pt]}, pattern color=white, draw=none] (\pmin+0.02,\n-3) -- (\pmin-.2,\n-2) -- (0,\n-2) -- (-0.65,-0.65);
\draw[white, ultra thick] (\pmin,\n-2) -- (\pmin,\n-2.4) -- (-\n,-0.8) -- (-0.75,-0.75);
% ball conjecture
\filldraw[pattern={Lines[angle=-45,distance=3pt, line width=0.5pt]}, pattern color=ballcolor, draw=none] (0,0) -- (\n, \n) -- (\pmin, \n) -- (\pmin, \n-2) -- (0, \n-2);
%region where p>max(0,q) (ratio = +infty)
\filldraw[color=mygray, draw=mygray] (\n,\n) -- (\n,\qmin) -- (0,\qmin) -- (0,0);
% two-point set conjecture
\filldraw[pattern={Lines[angle=90,distance=3pt, line width=0.5pt]}, pattern color=twoptcolorshading, draw=none] (-2,-2) -- (0, -2) -- (0, 0)  ;
% two-point set theorem
\filldraw[color=twoptcolorshading, draw=none] (\pmin,\qmin) -- (-2, -2) -- (0,-2) -- (0,\qmin)  ;
% regular 3-pt set theorem
\filldraw[color=simplexcolor] (\pmin,\qmin) -- (\qmin, -2) -- (-2, -2) ;
% Symmetry breaking gradient region
\filldraw[top color=white, bottom color=symmbr, middle color=symmbr!60, shading angle=-29, draw=white] (\pmin,-0.9) -- (-2,-2) -- (-1.6,-1.6) -- (-2.5,-0.9) -- (-3.5,-0.5) -- (\pmin,-0.42);
\filldraw[pattern={Lines[angle=45,distance=3pt,line width=2.4pt, xshift=2pt]}, pattern color=white, draw=none]  (\pmin,-0.9) -- (-2,-2) -- (-1.6,-1.6) -- (-2.5,-0.9) -- (-3.5,-0.5) -- (\pmin,-0.42);
\filldraw[pattern={Lines[angle=45,distance=3pt,line width=2.4pt, xshift=2pt]}, pattern color=white, draw=none]  (\pmin,-0.9) -- (-2,-2) -- (-1.6,-1.6) -- (-2.5,-0.9) -- (-3.5,-0.5) -- (\pmin,-0.42);
\draw[white] (\pmin,-0.9) -- (\pmin,-0.42);
% Symmetry breaking
\filldraw[color=symmbr] plot[] coordinates {(-2., -2.) (-2.0184, -1.93555) (-2.03957, -1.87111) (-2.06393, -1.80666) (-2.09199, -1.74222) (-2.12439, -1.67778) (-2.16191, -1.61333) (-2.20551, -1.54889) (-2.25643, -1.48444) (-2.31628, -1.42) (-2.38713, -1.35555) (-2.47179, -1.29111) (-2.57407, -1.22666) (-2.69934, -1.16222) (-2.8554, -1.09778) (-3.05398, -1.03333) (-3.31366, -0.968887) (-3.66564, -0.904442) (-4, -0.865776) (-4,-2)};

%%%%%%%%%%%%%%.       axes.      %%%%%%%%%%%%%%%%%

% line p=q
\draw[black,  thick]  (\qmin,\pmin) to (\n,\n);

% p-axis
\draw[->] (\pmin,0) to (\pmax,0);
\draw (\pmax,0)  node[right] {$p$};

% q-axis 
\draw[->] (0,\qmin) to (0,\qmax);
\draw (0,\qmax)  node[above] {$q$};

% p-axis labels
\draw[-] (\n,-0.1) to (\n,0.1);
\draw (\n+0.2, -0.1) node[below] {$\n$};
\draw[-] (-2,-0.1) to (-2,0.1);
\draw (-2, -0.1) node[below] {$-2$};

% q-axis labels
\draw[-] (-0.1,-2) to (0.1,-2);
\draw (-0.1,-2) node[left] {$-2$};
\draw[-] (-0.1,\n-1) to (0.1,\n-1);
\draw (-0.1,\n-1) node[left] {$1$};
\draw[-] (-0.1,\n) to (0.1,\n);
\draw (-0.1,\n+0.2) node[left] {$\n$};

%dot at (-infty, n-2)
\draw[ballcolor,fill=ballcolor] (\pmin,\n-2) circle (.32ex);

% rectangle at p=-infty
\draw (\pmin-0.05,0)  node[below] {$p=-\infty$};
%\draw[black,fill=black] (\pmin-0.03,-0.13)  rectangle  (\pmin+0.03,0.13) ;

% rectangle at q=-infty
\draw (0,\qmin-0.05)  node[below] {$q=-\infty$};
\draw[black,fill=black] (-0.13,\qmin-0.03)  rectangle  (0.13,\qmin+0.03) ;

%%%%%%%%%%%%%    lines and points     %%%%%%%%%%%%%%%

% solid line from (0,\n) to (\n,\n) (ball max by Watanabe)
\draw[-, ballcolor, ultra thick] (\n-2,\n) to (\n,\n);
% solid lines for p\leq -2
\draw[-, ballcolor, ultra thick] (\pmin,\n) to (-2,\n);
\draw[-, ballcolor, ultra thick] (\pmin,\n-1) to (-2,\n-1);
\draw[-, ballcolor, ultra thick] (\pmin,\n-2) to (-2,\n-2);
% q(2)
%\draw (\pmin,\n-2.8) node[left] {$q(2)$};
% curved dashed symmetry breaking line
\draw[ballcolor, dash pattern=on 5pt off 5.1pt, thick] (\pmin,\n-2.4)  .. controls (\pmin+0.65,\n-2.5) and (\pmin+0.95,\n-2.5) .. (\pmin+1.5,\n-2.9);
\draw  (\pmin+1.55,\n-3) node[right] {?};
% two-point set line (at q=-infty)
%\draw[twoptcolorshading, ultra thick] (0, \qmin ) -- (\pmin,\qmin);
% Polya-Szego conjectured pt (n-2,n-1)
\draw[ballcolor, very thick] (\n-2+0.08,\n-1+0.08) -- (\n-2-0.08,\n-1-0.08);
\draw[ballcolor, very thick] (\n-2-0.08,\n-1+0.08) -- (\n-2+0.08,\n-1-0.08);
% point at (-infty,n)
\draw[ballcolor,fill=ballcolor] (\pmin,\n) circle (.32ex);
% point at (-infty, n-2)
\draw[ballcolor,fill=ballcolor] (\pmin,\n-2) circle (.32ex);
% point at (-infty, n-1)
\draw[ballcolor,fill=ballcolor] (\pmin,\n-1) circle (.32ex);

%%%%%%%%%%%%%%%           labels and arrows      %%%%%%%%%%%%%%

% symmetry breaking
\draw (\pmin-1.8,-1.1) node[above] {symmetry breaking};
\draw (\pmin-1.8,-1.45) node[above] {\autoref{th:symmetrybreaking2dim}};
\myarrowL{(\pmin-0.8,-1.2)}{(\pmin+0.7,-1.6)}

% 3-point set labels
% 2D 3-point set theorem label
\draw (\pmin-2,\qmin+0.2)   node[above] {\autoref{th:2deqtriangle}(a)};
\myarrowL{(\pmin-0.87,\qmin+0.5)}{(\pmin+0.6,\qmin+1.2)}
% (n+1)-pt set  isodiametric theorem label (boundary line)
\draw (\pmin-2.1,\qmin+0.9)   node[above] {\autoref{pr:isodiamsimplex}};
\myarrowL{(\pmin-1.0,\qmin+1.15)}{(\pmin-0.05,\qmin+1.5)}
% drawing a 3-point set
% coordinates for the top vertex
\def\sx{(\pmin-2}
\def\sy{\qmin+2.2}
\draw[simplexcolor!30, very thick, fill=simplexcolor!5] (\sx,\sy) -- (\sx-0.35,\sy-0.6) -- (\sx+0.35,\sy-0.6) -- cycle;
\filldraw[simplexpicturecolor] (\sx,\sy) circle (0.25ex);
\filldraw[simplexpicturecolor] (\sx-0.35,\sy-0.6) circle (0.25ex);
\filldraw[simplexpicturecolor] (\sx+0.35,\sy-0.6) circle (0.25ex);

% two-point set labels
% two-point set conjecture
\draw (1.4,\qmin-1.2)   node[above] {\autoref{conj:twopoint}};
\myarrowR{(1.4,\qmin-0.7)}{(-0.68,\qmin+2.7)}
% two-point set thereom - bottom line
\draw (-1.3,\qmin-1.15)   node[above] {\autoref{le:twopoint}};
\myarrowL{(-1.35,\qmin-0.77)}{(-1.2,\qmin-0.05)}
% two-point set theorem 
\draw (-3.35,\qmin-1.2)   node[above] {\autoref{th:2deqtriangle}(b)};
\myarrowL{(-3.35,\qmin-0.67) }{(\qmin+2,\qmin+0.85)}
% two point set picture
\def\tpx{-5.75} % x coordinate of left endpoint
\def\tpy{\qmin-0.9} % y coordinate of left endpoint
\def\tpl{0.7} % length
\draw[twoptcolorshading!45, very thick] (\tpx,\tpy) -- (\tpx+\tpl,\tpy);
\filldraw[twoptcolor] (\tpx,\tpy) circle (0.25ex);
\filldraw[twoptcolor] (\tpx+\tpl,\tpy) circle (0.25ex);

% disk labels
% Polya-Szego label
\draw (\n+1.5,\n-1.2)  node[above] {P\'{o}lya--Szeg\H{o}};
\draw (\n+1.5,\n-1.6)  node[above] {\autoref{conj:ps45}};
\myarrowR{(\n+0.45,\n-1)}{(\n-1.8,\n-1)}
% conjectured region
\draw (-4,\n+0.7)  node[right] {\autoref{conj:riesz}};
\myarrowL{(-1.8,\n+0.7)}{(-0.8,\n-0.8)}
% (-infty, n-2)
\draw (\pmin-1.7,\n-2+0.50) node[above] {P\'{o}lya--Szeg\H{o}};
\draw (\pmin-1.7,\n-2+0.1) node[above] {\cite[p.{\,}19]{PS51}};
\myarrowL{(\pmin-0.8,\n-1.5)}{(\pmin-0.1,\n-1.9)}
% (-infty, n-1)
\draw (\pmin-1.7,\n-0.7) node[above] {\autoref{th:isodiametric}};
\myarrowL{(\pmin-0.77,\n-0.5)}{(\pmin-0.1,\n-0.9)}
% isodiametric inequality (-infty, n)
\draw (\pmin-1.7,\n+0.4) node[above] {isodiametric};
\draw (\pmin-1.7,\n) node[above] {inequality};
\myarrowL{(\pmin-0.75,\n+0.45)}{(\pmin-0.1,\n+0.1)}
% top right line (due to Watanabe)
\draw (\n+1.3,\n+0.5)  node[above] {Watanabe};
\draw (\n+1.3,\n)  node[above] {\cite[Theorem 2]{W83}};
\myarrowR{(\n+0.4,\n+0.65)}{(\n-1,\n+0.1)}
% disk picture
\filldraw[ballcolor] (-0.7,\n+0.9) circle (1.6ex);

%label for p>max(0,q) (where the max of the ratio is +infty)
\draw (\n+1.65,-2.2)  node[above] {$+\infty$ };
\draw (\n+1.65,-2.6)  node[above] {\autoref{le:maxinfty}};
\myarrowR{(\n+1.22,-1.95)}{(\n-1,-1.7)}

\end{tikzpicture}

\caption{\label{fig:pqdiagram2D} ($n=2$) Maximizing $\capq(K)/\capp(K)$ for $K \subset \R^2$. Solid regions and solid segments indicate rigorous results. Striped regions and dashed curves are conjectural. Blue corresponds to the disk, red to the regular three-point set, and green to the two-point set.}
\end{figure}
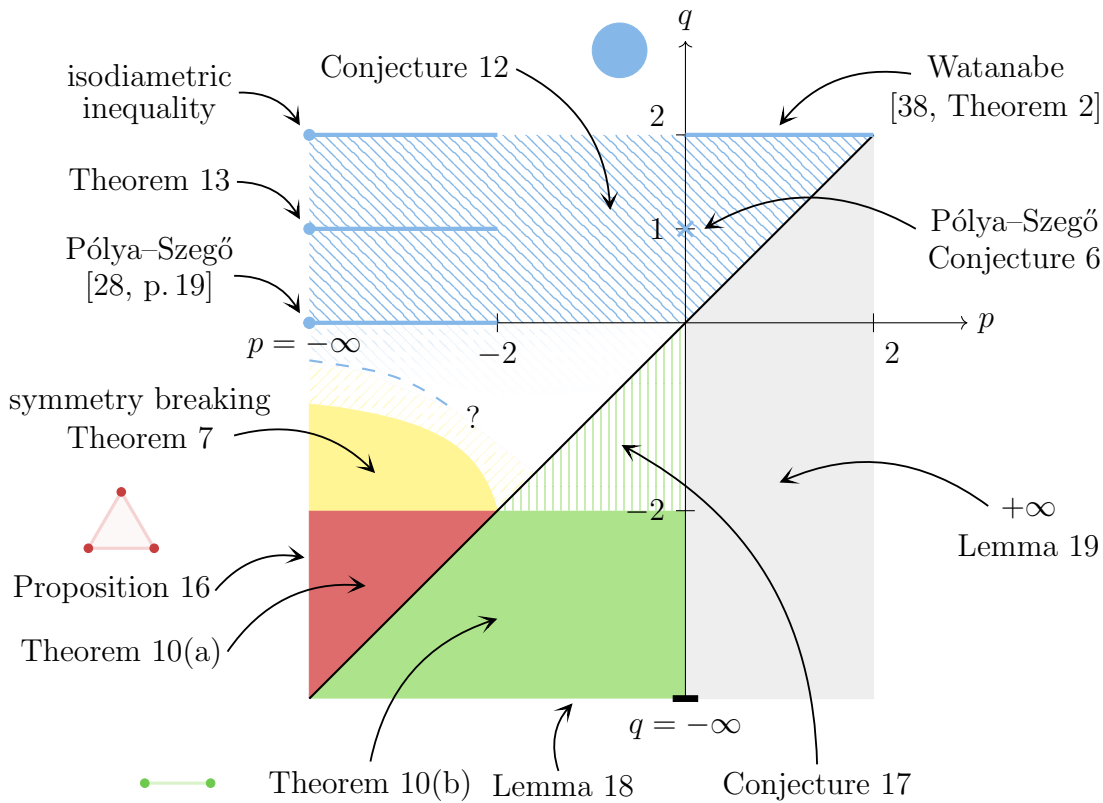

%\vspace{18pt}

%----------------------------------------------------------------------------------------------------------------------------------

Motivating the entire upper region of \autoref{fig:pqdiagram2D} is an unresolved question by P\'{o}lya and Szeg\H{o}, marked at the point $(p,q)=(0,1)$. They conjectured that under the passage from logarithmic to Newtonian capacity, the disk retains more of its capacity than any other planar set, namely $2/\pi \simeq 64$\% of it. 
\begin{conjecture}[P\'{o}lya and Szeg\H{o} \protect{\cite[Conjecture (1.3)]{PS45}}] \label{conj:ps45}
If $K \subset \R^2$ is compact then 
\[
\capone(K) \leq \frac{2}{\pi} \capzero(K) ,
\]
\end{conjecture}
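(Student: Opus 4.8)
The plan is to recast the conjecture as a constrained extremal problem and attack it through a heat-kernel (Gaussian subordination) representation that expresses both energies as linear functionals of a single scalar profile. Writing $I_{log},I_1$ for the logarithmic and Riesz $1$-energies of a probability measure and $V_{log}(K)=\min_\mu I_{log}(\mu)$, $V_1(K)=\min_\mu I_1(\mu)$ for the corresponding equilibrium energies, the asserted bound $\capone(K)\le\frac2\pi\capzero(K)$ is equivalent, after using the scaling invariance recorded in \autoref{th:Rieszmonotonicity} to normalize $\capzero(K)=1$, to the statement that every compact $K\subset\R^2$ of logarithmic capacity $1$ satisfies $V_1(K)\ge\pi/2$, with equality only for the unit disk. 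Thus the task is to bound the Riesz $1$-energy \emph{from below} subject to a logarithmic-capacity constraint, which is the genuinely hard direction (lower bounds on energy cannot be produced by inserting a test measure).

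\textbf{Common representation.} For a probability measure $\mu$ set $G_\mu(t)=\iint e^{-t|x-y|^2}\,d\mu\,d\mu$, a completely monotone function with $G_\mu(0)=1$. Frullani's integral and the subordination of powers to Gaussians give
\[
I_1(\mu)=\frac{1}{\sqrt{\pi}}\int_0^\infty t^{-1/2}G_\mu(t)\,dt,
\qquad
I_{log}(\mu)=\frac12\int_0^\infty\bigl(G_\mu(t)-e^{-t}\bigr)\frac{dt}{t},
\]
so both energies are explicit linear functionals of the single profile $t\mapsto G_\mu(t)$. I would first exploit this common representation, together with the Riesz rearrangement inequality applied scale-by-scale, to reduce from general $K$ to circularly symmetric and then convex sets. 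This representation also makes the disk fully computable, since on a disk the equilibrium densities and the resulting profiles $G_\mu(t)$ are classical and evaluate in closed form via Bessel functions, pinning the constant $2/\pi$.

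\textbf{The main obstacle.} The hard part --- and the reason the conjecture remains open --- is precisely the coupling flagged in the introduction: $V_1$ is carried by the Riesz $1$-equilibrium measure $\mu_1$, while $V_{log}$ is carried by the logarithmic equilibrium measure $\mu_0$, and these differ even for the disk. In particular the tempting single-measure inequality $I_1(\mu)\ge\frac\pi2 e^{I_{log}(\mu)}$ is \emph{false} (it already fails at $\mu=\mu_1$ for the disk, where $I_1(\mu_1)=\pi/2$ but $I_{log}(\mu_1)>I_{log}(\mu_0)=0$), so one cannot simply compose two one-measure bounds. Likewise symmetrization of the set decreases both $\capone$ and $\capzero$ at once and is therefore inconclusive for their ratio. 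The essential step I would pursue is a minimax/duality argument that \emph{decouples} the two equilibrium measures: construct a single admissible potential (equivalently, a single transport plan between $\mu_0$ and $\mu_1$) that simultaneously lower-bounds $V_1$ and upper-bounds $e^{V_{log}}$, with both estimates saturating exactly on the disk. Finding such a simultaneous certificate is the crux, and I expect it to be the principal difficulty.

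\textbf{Achievable milestones.} As concrete stepping stones that both test the strategy and guide the construction of the certificate, I would verify by a shape derivative that the disk is a critical point of $\capq(K)/\capp(K)$ at $(p,q)=(0,1)$, compute the second variation to establish it as a strict local maximizer, and confirm the inequality directly on explicit one-parameter families such as ellipses and regular polygons, for which the profiles $G_\mu(t)$ are tractable. These calculations serve as consistency checks on the value $2/\pi$ and, more importantly, reveal the structure of the extremal duality data needed to close the global argument; successful local stability together with such a duality certificate would, in combination, yield the conjecture.
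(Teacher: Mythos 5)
There is no proof in the paper to compare against: the statement you were handed is \autoref{conj:ps45}, which the paper records as an \emph{open} conjecture of P\'{o}lya and Szeg\H{o} (1945). The paper offers only the remark that equality holds for the disk and Szeg\H{o}'s partial result $\capone(K) \leq (1.07)\frac{2}{\pi}\capzero(K)$ for connected $K$, i.e.\ a bound off by $7\%$. Your submission, read honestly, is also not a proof but a research program. Its correct parts are genuinely correct: the normalization $\capzero(K)=1$ does reduce the claim to $V_1(K)\geq \pi/2$; the Gaussian subordination formulas for $I_1$ and $I_{log}$ are right; and the observation that the single-measure inequality $I_1(\mu)\geq \tfrac{\pi}{2}\,e^{I_{log}(\mu)}$ already fails at the disk's own $1$-equilibrium measure (because $\mu_1\neq\mu_0$ forces $I_{log}(\mu_1)>0$) is a sharp and accurate diagnosis of why the problem is hard. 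But the step that would actually close the argument --- the ``simultaneous certificate'' or minimax duality object that lower-bounds $V_1$ and upper-bounds $e^{V_{log}}$ with both bounds saturating on the disk --- is postulated, never constructed. You say yourself that finding it ``is the crux'' and ``the principal difficulty.'' A proof cannot defer its crux; everything surrounding that missing object is scaffolding.

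Two further concrete flaws. First, your proposed reduction ``from general $K$ to circularly symmetric and then convex sets'' via the Riesz rearrangement inequality is inconsistent with your own later (correct) remark: rearrangement increases $G_\mu(t)$ for every $t$, hence moves $I_1$ and $I_{log}$ in the \emph{same} direction, so it controls numerator and denominator of the capacity ratio identically and yields nothing for their quotient. This is precisely why symmetrization proves the volume-constrained results (Watanabe, Carleman, Szeg\H{o}) cited in \autoref{sec:conjectures} but has never touched this conjecture; the reduction step as stated would fail. Second, the proposed milestones --- shape derivative, second variation, ellipses and polygons --- can at best establish that the disk is a strict local maximizer and verify the inequality on special families (the paper already carries out the elliptical family in \autoref{sec:examples}); local stability plus examples does not combine with an unconstructed duality certificate into a global theorem. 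As it stands the conjecture remains open, and your proposal leaves it so.
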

Equality holds when $K$ is a closed disk, since the unit disk has logarithmic capacity $\capzero(\overline{\B}^2) =1$ and Newtonian capacity $\capone(\overline{\B}^2) = 2/\pi$ (see formulas in \cite[Appendix A]{CL24b}). 

They restated the question in their 1951 book ``Isoperimetric Inequalities in Mathematical Physics'' \cite{PS51} (see page 10, and item {\#}17 on page 18). The question extends naturally to higher dimensions for $(p,q)=(n-2,n-1)$, as indicated later on \autoref{fig:pqdiagram}. See the examples in \autoref{sec:examples} that illustrate the conjecture. 

The best partial result toward \autoref{conj:ps45} is due to Szeg\H{o} \cite{S52}, \cite[pp.\,141--144]{S56}. He showed using conformal mapping techniques and the Dirichlet integral characterization of capacity that if $K$ is connected then  
\[
\capone(K) \leq (1.07) \frac{2}{\pi} \capzero(K) .
\]
Thus, Szeg\H{o} got within 7\% of the conjectured optimal value in the plane. 

Continuing into the third quadrant of \autoref{fig:pqdiagram2D}, where $p<q<0$, we believe a symmetry breaking threshold should exist, indicated by the dashed curve in the figure. Above the threshold the ball should maximize the capacity ratio while below it some other shape should be maximal. Finding the precise threshold seems challenging, but by comparing the ball with a regular three-point set we prove symmetry breaking for a substantial parameter region, plotted in \autoref{fig:pqdiagram2D}. The rightmost corner of the region is at $(p,q)=(-2,-2)$. In this parameter region, the maximizing set might perhaps be like an equilateral triangle with rounded sides.  
\begin{theorem}[Symmetry breaking] \label{th:symmetrybreaking2dim}
Let $q_* \simeq -0.856$ satisfy $2 \sqrt{\pi} \Gamma(1 - q_*/2) = 3 \Gamma((1 - q_*)/2)$. If 
\[
-\infty < p < \frac{q \log(4/3)}{\log \frac{2 \sqrt{\pi} \Gamma(1 - q/2)}{3 \Gamma((1 - q)/2)}} , \qquad -2<q<q_* ,
\]
then the ratio $\capq(K)/\capp(K)$ is less for $K$ a ball than when $K $ is a regular three-point set. 
\end{theorem}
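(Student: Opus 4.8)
The plan is to reduce the comparison of the two capacity ratios to a single scalar inequality in $p$ and $q$, using closed forms for the capacities of the disk and of the regular three-point set, and then to match it against the stated threshold. No existence or optimality result is needed, since the claim only compares two explicit sets.

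First I would record the two capacities. For a regular three-point set $T$ of diameter $d$, the equidistributed equilibrium measure gives $V_p(T)=\tfrac23 d^{-p}$, hence $\capp(T)=(2/3)^{-1/p}d$, as noted in the paper. For the closed unit disk, the formulas in \cite[Appendix A]{CL24b} give $\capp(\overline{\B}^2)=2^{1+1/p}$ when $p\le -2$, and $\capp(\overline{\B}^2)=2\,E(p)^{1/p}$ when $-2\le p\le 0$, where
\[
E(p)=\frac{\sqrt{\pi}\,\Gamma(1-p/2)}{\Gamma\!\big(\tfrac{1-p}{2}\big)} ,
\]
the two branches matching at $p=-2$ because $E(-2)=2$. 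Since capacity scales linearly and the ratio is scale invariant, it is natural to compare the disk and $T$ at a common diameter through $R(p)=\capp(\overline{\B}^2)/\capp(T)$. Substituting the formulas yields $R(p)=(4/3)^{1/p}$ for $p\le -2$ and $R(p)=G(p)^{1/p}$ for $-2\le p\le 0$, where $G(p)=\tfrac23 E(p)=\frac{2\sqrt{\pi}\,\Gamma(1-p/2)}{3\,\Gamma((1-p)/2)}$ is precisely the quantity in the theorem; the branches agree at $p=-2$ since $G(-2)=4/3$. The conclusion that $\capq/\capp$ is smaller for the ball is then equivalent to the single inequality $R(q)<R(p)$.

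Next I would pin down the branches. For $-2<q<q_*$ we have $R(q)=G(q)^{1/q}$, and the hypothesis is $p<p_0(q):=q\log(4/3)/\log G(q)$. The structural fact making everything work is that $p_0(q)\le -2$ on this whole range, with equality only at $q=-2$. Granting it, every admissible $p$ satisfies $p\le -2$, so $R(p)=(4/3)^{1/p}$ and the target $R(q)<R(p)$ becomes $G(q)^{1/q}<(4/3)^{1/p}$. Taking logarithms (every term is negative), this reads $\tfrac1q\log G(q)<\tfrac1p\log(4/3)$; and since $p<p_0(q)<0$ forces $\tfrac1p>\tfrac{1}{p_0(q)}=\log G(q)/(q\log(4/3))$, multiplying by $\log(4/3)>0$ gives exactly the required inequality. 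Equivalently, $p_0(q)$ is defined so that $R(p_0)=R(q)$, and $R(p)=(4/3)^{1/p}$ is strictly decreasing on $(-\infty,-2]$, so $p<p_0$ yields $R(p)>R(p_0)=R(q)$.

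The heart of the argument is therefore establishing $p_0(q)\le -2$, equivalently the Gamma-ratio bound $\log G(q)\le\tfrac{|q|}{2}\log(4/3)$ on $(-2,q_*)$, together with $\log G(q)>0$ there. Both follow from the digamma identity
\[
(\log G)'(q)=\tfrac12\Big(\psi\!\big(\tfrac{1-q}{2}\big)-\psi\!\big(1-\tfrac q2\big)\Big)=-\tfrac12\,D(q),\qquad D(q):=\psi\!\big(1-\tfrac q2\big)-\psi\!\big(\tfrac{1-q}{2}\big).
\]
Because $\psi'$ is positive and decreasing, $D$ is positive and increasing on $(-2,0)$, so $\log G$ is decreasing there and stays positive down to $G=1$ at $q=q_*$. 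Setting $\phi(q)=\log G(q)+\tfrac q2\log(4/3)$, one has $\phi(-2)=0$ and $\phi'(q)=\tfrac12(\log(4/3)-D(q))$; since $D$ increases, $D(q)\ge D(-2)=2\log 2-1$ for $q\ge -2$, and $2\log 2-1>\log(4/3)$ reduces to $\log 3>1$, i.e.\ $3>e$. Hence $\phi'<0$ and $\phi<\phi(-2)=0$ on $(-2,q_*)$, which is exactly $p_0(q)\le -2$. This monotonicity-and-sign analysis of $D$ is the only genuinely analytic obstacle; once it is in hand, the remainder is bookkeeping with the explicit capacity formulas.
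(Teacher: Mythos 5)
Your proposal is correct, and its core reduction coincides with the paper's: express both ratios in closed form, observe that the comparison collapses to $G(q)^{1/q}<(4/3)^{1/p}$ where $G(q)=\frac{2\sqrt{\pi}\,\Gamma(1-q/2)}{3\,\Gamma((1-q)/2)}$, and note that this is exactly the hypothesis $p<p_0(q):=q\log(4/3)/\log G(q)$ rearranged using $\log G(q)>0$ for $q<q_*$ and $pq>0$. Where you genuinely depart from the paper is in proving the structural fact $p_0(q)\le -2$ on $(-2,q_*)$. This step matters: the paper quotes the disk ratio formula $\capq(D)/\capp(D)=2^{-1/p}\bigl(\sqrt{\pi}\,\Gamma(1-q/2)/\Gamma((1-q)/2)\bigr)^{1/q}$, which is valid only for $p\le -2$ (for $-2<p<0$ the denominator $\capp(D)$ has a different Gamma-function expression), and then performs the rearrangement --- but it never verifies that the hypothesis confines $p$ to the range $p\le -2$ where that formula applies. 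Your digamma argument supplies precisely this verification: $\phi(q)=\log G(q)+\tfrac q2\log(4/3)$ vanishes at $q=-2$ and is strictly decreasing because $D(q)=\psi(1-\tfrac q2)-\psi(\tfrac{1-q}{2})$ is increasing with $D(-2)=2\log 2-1>\log(4/3)$ (equivalent to $3>e$), so $\phi<0$ and hence $p_0(q)<-2$ on $(-2,q_*)$. The authors were surely aware of this fact (the figure places the region's rightmost corner at $(-2,-2)$), but the written proof does not contain it, so your proposal is the more complete argument. As a byproduct, your monotonicity of $\log G$ also yields uniqueness of the root $q_*$ and positivity of $\log G$ on $(-2,q_*)$, which the paper instead obtains from strict convexity of $\log\Gamma$ via a difference-quotient argument; both routes work, yours being self-contained at the cost of the extra digamma lemma.
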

The proof is in \hyperlink{proofof:th:symmetrybreaking2dim}{\autoref*{sec:threepoint}}. 

Taking now a slight detour, the capacity of an arbitrary three-point set (vertices of an arbitrary triangle) can be computed explicitly when $p<0$. 
\begin{theorem}[Capacity of a three-point set] \label{th:trianglecapformula} Let $p<0$ and $t=|p|>0$. If $T\subset\R^2$ is a three-point set with distances $a, b, c$ between its points, where $0<a,b \leq c$, then
\[
\capp(T) = \begin{cases}
2^{-1/t}c & \text{if\quad} a^{t}+b^{t}\leq c^{t}, \\
\dfrac{2^{1/t}abc}{\big( 4(ab)^{t}-(a^{t}+b^{t}-c^{t} )^2 \big)^{1/t}} & \text{if\quad} a^{t}+b^{t} > c^{t}.
\end{cases}
\]
The $p$-equilibrium measure on $T$ is unique. 
\end{theorem}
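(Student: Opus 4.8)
\emph{Setup.} Because $T$ has only three points, every probability measure on $T$ is a vector of masses $\mu=(\mu_1,\mu_2,\mu_3)$ on the vertices, ranging over the $2$-simplex $\Delta=\{\mu_i\ge 0,\ \mu_1+\mu_2+\mu_3=1\}$. Writing $w_{ij}=d_{ij}^{\,t}$ for the $t$-th powers of the three pairwise distances and recalling that for $p<0$ the energy is a \emph{maximum}, the plan is to maximize the quadratic form $I(\mu)=\mu^{\mathsf T}W\mu=2\sum_{i<j}\mu_i\mu_j w_{ij}$ over $\Delta$, where $W$ is the symmetric matrix with zero diagonal and off-diagonal entries $w_{ij}$. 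The maximum is attained by compactness, and then $\capp(T)=V_p(T)^{1/t}=\big(\max_\Delta I\big)^{1/t}$, so the whole problem reduces to a two-variable constrained maximization. I would organize everything around the first-order optimality condition on $\Delta$: at a maximizer $\mu^\ast$ the discrete potential $U(P_i)=\sum_j w_{ij}\mu^\ast_j$ equals a common value $\lambda$ at every vertex of positive mass and satisfies $U(P_i)\le\lambda$ at the remaining vertices, with $V_p(T)=\lambda$. Since a single atom gives $I=0$, the support is either one edge or all three points, producing the two branches of the formula.

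\emph{Two-point branch.} If the maximizer is supported on an edge, equal masses $\tfrac12,\tfrac12$ maximize the two-point energy along it, and the largest value among the three edges is $w/2$ with $w=c^t$ the longest edge; hence $V_p=c^t/2$ and $\capp(T)=2^{-1/t}c$. The optimality condition at the omitted vertex reads $\tfrac12(a^t+b^t)\le\tfrac12 c^t$, i.e.\ $a^t+b^t\le c^t$, which is exactly the first case.

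\emph{Three-point branch.} If all masses are positive, $\mu^\ast$ solves the linear system $W\mu=\lambda\mathbf 1$, $\mathbf 1^{\mathsf T}\mu=1$, so $\lambda=\big(\mathbf 1^{\mathsf T}W^{-1}\mathbf 1\big)^{-1}$. Using $\det W=2w_{12}w_{13}w_{23}$ and summing the cofactors of $W$ gives $\mathbf 1^{\mathsf T}W^{-1}\mathbf 1=S/\det W$ with
\[
S=2\big(w_{12}w_{13}+w_{13}w_{23}+w_{12}w_{23}\big)-\big(w_{12}^2+w_{13}^2+w_{23}^2\big)=4(ab)^t-(a^t+b^t-c^t)^2 ,
\]
the last step being the Heron-type identity $2(\alpha\beta+\beta\gamma+\gamma\alpha)-(\alpha^2+\beta^2+\gamma^2)=4\alpha\beta-(\alpha+\beta-\gamma)^2$ specialized to single out $\gamma=c^t$. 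Thus $V_p=\lambda=2(abc)^t/S$, and raising to the power $1/t$ yields the second case. The same cofactor computation shows that the mass at the vertex opposite the longest edge has the sign of $a^t+b^t-c^t$, so this branch is feasible precisely when $a^t+b^t>c^t$, complementary to the first; the two formulas agree on the common boundary $a^t+b^t=c^t$.

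\emph{Maximality, uniqueness, and the main obstacle.} The delicate point is to confirm that the interior critical point is a genuine maximum (not a saddle) and that the maximizer is unique, since the general uniqueness theory quoted above covers only $-2<p<0$, i.e.\ $0<t<2$. For $0<t\le 2$ the kernel $|x-y|^t$ is conditionally negative definite, so $I$ is concave on $\Delta$ and both facts are automatic. For $t>2$ I would argue directly from the spectrum of the $3\times 3$ matrix $W$: it has trace $0$ and $\det W=2(abc)^t>0$, forcing inertia $(1,2)$, and the standard rule for the restriction of a quadratic form to the hyperplane $\mathbf 1^{\perp}$ shows $W|_{\mathbf 1^\perp}$ is negative definite exactly because $\mathbf 1^{\mathsf T}W^{-1}\mathbf 1=S/\det W>0$ in the three-point regime (where $S>0$ follows from $c^t\ge\max(a^t,b^t)$). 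Negative definiteness on $\mathbf 1^\perp$ makes $I$ strictly concave on the constraint plane, which gives both maximality and uniqueness in the three-point branch; in the two-point branch the longest edge is strictly longest, so its balanced measure is the unique maximizer. I expect the real work to lie in this sign and definiteness bookkeeping uniformly in $t$, rather than in the algebra of the capacity formula itself.
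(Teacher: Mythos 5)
Your proposal is correct, and its skeleton matches the paper's proof: reduce to maximizing the quadratic form $\mu^{\mathsf T}W\mu$ over the $2$-simplex, solve the stationarity equations explicitly, and use the Heron-type identity $2(AB+BC+CA)-(A^2+B^2+C^2)=4AB-(A+B-C)^2$ (with $A=a^t$, $B=b^t$, $C=c^t$) to produce the stated formula; your cofactor computation of $\mathbf 1^{\mathsf T}W^{-1}\mathbf 1$ is just the paper's computation of $Q^{-1}$ in different clothing. The differences lie in the logical glue, and they are worth recording. First, you separate the two branches by the KKT inequality at the zero-mass vertex, which directly yields $a^t+b^t\le c^t$ in the edge-supported case; the paper never uses the inequality half of the first-order conditions, but instead bounds the boundary energy by $C/2$, solves the Lagrange system in the interior (feasible precisely when $A+B>C$), and proves the explicit comparison $\lambda/2=4AB(C/2)/\bigl(4AB-(A+B-C)^2\bigr)>C/2$, so that the interior critical point beats every boundary measure whenever it exists. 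Second, for maximality and uniqueness you invoke concavity: Schoenberg's conditional negative definiteness for $t\le 2$, and for $t>2$ the inertia rule that signature $(1,2)$ together with $\mathbf 1^{\mathsf T}W^{-1}\mathbf 1>0$ forces $W|_{\mathbf 1^\perp}$ to be negative definite. That rule is correct (decompose $\R^3$ as the $W$-orthogonal direct sum of $\operatorname{span}(W^{-1}\mathbf 1)$ and $\mathbf 1^{\perp}$), but the entire second-order discussion --- which you flag as the main obstacle --- is unnecessary, and this is what the paper's route buys: a maximizer exists by compactness; it must satisfy the first-order conditions; and in each parameter regime exactly one first-order point exists (the full-support solution is feasible iff $a^t+b^t>c^t$, the balanced measure on the strictly longest edge passes the KKT test iff $a^t+b^t\le c^t$, while shorter edges and single atoms never qualify), so the maximizer and its value are pinned down with no saddle-versus-maximum analysis at all. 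Your own KKT framework already contains this argument; you could delete the spectral bookkeeping and the proof would close. One detail to make explicit in either approach: when $a^t+b^t\le c^t$ one automatically has $a,b<c$ strictly, so the longest edge is unique and uniqueness of the equilibrium measure in that branch is not threatened by ties.
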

The proof in \hyperlink{proofof:th:trianglecapformula}{\autoref*{sec:threepoint}} reveals that in the first case, where $a^{t}+b^{t}\leq c^{t}$, the equilibrium measure concentrates at the endpoints of the longest side. In the other case, the measure lies on all three points. 

Capacities of some $k$-point sets, both regular and nonregular, are given in the same section. 

Continuing now with the restricted class of two- and three-point sets, we obtain the following sharp upper and lower bounds, proved in \autoref{sec:threeptratioproof}. 
\begin{theorem}[Optimal capacity ratios among two- and three-point sets]\label{th:threeptratio}
Assume $p<q<0$.  If $T\subset \R^2$ is a two-point or three-point set, then
\[
\left( \frac{1}{2} \right)^{\!\! \frac{1}{p}-\frac{1}{q}} \leq \frac{\capq(T)}{\capp(T)}\leq \left( \frac{2}{3} \right)^{\!\! \frac{1}{p}-\frac{1}{q}},
\]
with the minimum attained in particular by every two-point set and the maximum attained if and only if $T$ is a regular three-point set.
\end{theorem}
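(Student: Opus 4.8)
The plan is to bypass the explicit formula of \autoref{th:trianglecapformula} and argue directly from the variational definition of energy, which handles all three cases of that formula at once. By scale invariance assume $\diam T = 1$. For $p<0$ set $r = -p > 0$ and write $V(r) = V_p(T) = \max_\mu \iint_T |x-y|^{r}\,d\mu\,d\mu$, so that $\capp(T) = V(r)^{1/r}$. With $s = -q$ and $t = -p$ (so $0<s<t$), the quantity to bound is $\capq(T)/\capp(T) = V(s)^{1/s}/V(t)^{1/t}$. I would first record four elementary properties of $V$ valid for any compact $T$ of diameter $1$: (i) $V$ is nonincreasing on $(0,\infty)$, since $|x-y|\le 1$; (ii) $\tfrac12 \le V(r)\le 1$, the upper bound again from $|x-y|\le1$ and the lower bound by testing the energy against the measure with mass $\tfrac12$ at each of two diametral points; (iii) $r\mapsto \log V(r)$ is convex, because for fixed $\mu$ the function $\log\iint|x-y|^r\,d\mu\,d\mu$ is a logarithmic moment generating function, hence convex in $r$, and a pointwise supremum of convex functions is convex; and (iv) for a genuine three-point set, $V(r)\to 1-\min_\mu\sum_i\mu_i^2 = 2/3$ as $r\to 0^+$.

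For the lower bound I would set $h(r) = \log\bigl(2V(r)\bigr)$, which is nonnegative by (ii) and nonincreasing by (i); hence $h(r)/r$ is nonincreasing, and this rearranges exactly to $V(s)^{1/s}/V(t)^{1/t}\ge 2^{\,1/t-1/s} = (1/2)^{1/p-1/q}$, with equality whenever $V\equiv\tfrac12$ on $[s,t]$, in particular for every two-point set. For the upper bound, restricting to three-point sets, I would use that by (iii)--(iv) the convex function $f = \log V$ has $f(0^+) = \log(2/3)$, so its difference quotient from the origin, $\tfrac1r\bigl(f(r) - \log(2/3)\bigr) = \tfrac1r\log\bigl(\tfrac32 V(r)\bigr)$, is nondecreasing in $r$. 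Evaluating at $s<t$ gives $\bigl(\tfrac32 V(s)\bigr)^{1/s}\le\bigl(\tfrac32 V(t)\bigr)^{1/t}$, which rearranges to $V(s)^{1/s}/V(t)^{1/t}\le (3/2)^{1/t-1/s} = (2/3)^{1/p-1/q}$; two-point sets satisfy this strictly, since they already realize the lower extreme. The merit of this route is that it never splits into the sub-cases of \autoref{th:trianglecapformula}, the energy formulation being uniform; the explicit formula would be needed only if one preferred to verify (i)--(iv) by hand.

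The delicate point, which I expect to be the main obstacle, is the equality characterization in the upper bound. Equality forces the difference quotient of $f$ from the origin to be constant on $[s,t]$; since it is nondecreasing and $f$ is convex, this pushes $f = \log V$ to be affine on all of $[0,t]$. I would then feed this back into the variational problem: choosing a maximizing measure $\mu^\ast$ at an interior exponent and applying midpoint convexity of $\log\iint|x-y|^r\,d\mu^\ast\,d\mu^\ast$ against the affine function $\log V$, I would conclude that this moment generating function is itself affine on a subinterval, which occurs only when the distance distribution of $\mu^\ast$ is a single Dirac mass, i.e.\ all pairwise distances in $\operatorname{supp}\mu^\ast$ are equal. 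Combining this with $V(0^+) = 2/3$, which forces $\sum_i (\mu^\ast_i)^2 = 1/3$ and hence full support with equal weights $1/3$, rules out a two-point support and identifies $T$ as a regular three-point set; the converse is the direct computation $V(r)\equiv 2/3$ for the equilateral triangle of diameter $1$. The steps requiring genuine care are justifying the limit in (iv), the existence (and where available uniqueness) of the maximizing measure, and the rigidity passage from affineness of a moment generating function to a one-point distance spectrum.
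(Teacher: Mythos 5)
Your proof is correct, and it follows a genuinely different route from the paper's. The paper proves \autoref{th:threeptratio} by feeding the explicit formula of \autoref{th:trianglecapformula} into a multi-step monotonicity analysis of the ratio $R(a,b)$ over the triangle-shape region $0<a,b\le 1$, $a+b\ge 1$, split into the sub-cases $1\le s<r$, $s<1<r$ and $s<r\le 1$, the middle one of which is closed out by invoking the one-dimensional result \autoref{pr:onedim_lowermiddle}; you instead exploit convexity of $f(r)=\log V(r)$ in the exponent and never touch the explicit formula. Your properties (i)--(iv) are all sound, and the steps you flag as delicate are elementary for finite sets: (iv) follows from uniform convergence of $\sum_{i\ne j}\mu_i\mu_j d_{ij}^{\,r}$ on the compact probability simplex as $r\to 0^+$; a maximizing measure exists by compactness; and the rigidity step is the equality case of Cauchy--Schwarz, since $r\mapsto \log\sum_k c_k e^{r u_k}$ with all $c_k>0$ is strictly convex unless the exponents $u_k$ coincide, while a convex $\phi\le f$ touching the affine $f$ at an interior point must equal $f$ on the whole interval, so $\phi(0)=\log(2/3)$ forces three atoms of mass $1/3$ and equal mutual distances, i.e.\ a regular three-point set. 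What the paper's route buys is finer shape information---the explicit directions of increase of $R(a,b)$ recorded in \autoref{fig:triangleparametrization}; what your route buys is brevity and, more importantly, generality, and this deserves emphasis: nothing in your argument uses the plane or the number three. The same difference-quotient bound gives $\capq(T)/\capp(T)\le ((k-1)/k)^{1/p-1/q}$ for every $k$-point set $T\subset\Rn$, with equality only for equal masses $1/k$ at mutually equidistant points, and your lower bound, which needs only (i)--(ii), applies verbatim to every compact set containing more than one point. If these extensions withstand detailed checking, then the first, combined with the finite-set reduction in \autoref{th:maximizer}, would settle \autoref{conj:simplex} for $p<q<-2$ (and then $q=-2$ by continuity), while the second, applied with $p$ and $q$ interchanged, would settle \autoref{conj:twopoint}; since the paper poses both as open problems, you should write the argument out in full and scrutinize it carefully---I was unable to find a gap.
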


Building on the last result, we prove the regular three-point set conjecture and some of the two-point conjecture. That is, \autoref{conj:simplex} from the next section holds in the planar case and so does \autoref{conj:twopoint} for $q \leq-2$. 
\begin{theorem}[$p<0$ and $q\leq -2$] \label{th:2deqtriangle} Consider compact sets $K\subset \R^2$ containing more than one point. 

(a) If $p<q\leq-2$ then $\capq(K)/\capp(K)$ is maximal for $K$ a regular three-point set, with the maximal value being $(2/3)^{1/p-1/q}$. 

(b) If $q<p<0$ and $q\leq -2$, then $\capq(K)/\capp(K)$ is maximal for $K$ a two-point set, with maximal value $2^{1/q-1/p}$.
\end{theorem}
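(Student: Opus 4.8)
The plan is to reduce the problem over arbitrary compact sets to the two- and three-point sets already handled by \autoref{th:threeptratio}. The reduction is driven by the regime $q\le -2$: here the numerator kernel $|x-y|^{-q}=|x-y|^{|q|}$ is \emph{superquadratic}, $|q|\ge 2$, and I claim that for $q<-2$ the energy-maximizing ($q$-equilibrium) measure of any compact $K\subset\R^2$ is supported on at most three points. Call this the support lemma. Heuristically it is the planar shadow of the one-dimensional threshold at $p=-1$ behind \autoref{pr:onedim_lowermiddle}: for a power $|x-y|^\alpha$ the equilibrium measure spreads out when $\alpha<n$ and collapses onto the vertices of a simplex ($\le n+1$ points) when $\alpha>n$, the borderline $\alpha=n$ being exactly $q=-2$ in the plane.

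Granting the support lemma the rest is quick. Fix $K$ with more than one point, let $\mu_q$ be its $q$-equilibrium measure, and set $T=\operatorname{supp}\mu_q$. Since $V_q(K)>0$ the set $T$ has at least two points, and by the lemma at most three, so $T$ is a two- or three-point set. Because $\mu_q$ is carried by $T\subseteq K$ we have $V_q(T)=V_q(K)$, hence $\capq(T)=\capq(K)$, while inclusion $T\subseteq K$ gives $\capp(T)\le\capp(K)$. Consequently
\[
\frac{\capq(K)}{\capp(K)}=\frac{\capq(T)}{\capp(K)}\le\frac{\capq(T)}{\capp(T)} .
\]
In case (a), where $p<q$, \autoref{th:threeptratio} bounds the last ratio by $(2/3)^{1/p-1/q}$, with equality precisely for a regular three-point set; in case (b), where $q<p$, the same theorem applied with $p$ and $q$ interchanged gives $\capp(T)/\capq(T)\ge(1/2)^{1/q-1/p}$, i.e.\ $\capq(T)/\capp(T)\le 2^{1/q-1/p}$, with the two-point sets extremal. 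The stated maximal values are attained by these explicit sets: a regular $k$-point set has $\capq=((k-1)/k)^{-1/q}\diam$, giving $(2/3)^{-1/q}$ for the equilateral triangle and $2^{1/q}$ for a segment, from which $(2/3)^{1/p-1/q}$ and $2^{1/q-1/p}$ follow.

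The support lemma genuinely needs $q<-2$ (at $q=-2$ the potential is a paraboloid and $\operatorname{supp}\mu_q$ can be a full circle), so the boundary value $q=-2$ I would reach by a limit that avoids continuity of capacity altogether. Given $K$, pick $q_k\uparrow -2$ with $q_k<-2$ and $q_k$ on the required side of $p$ (possible since $p<-2$ in (a) and $-2<p$ in (b)). Monotonicity of capacity in the parameter (\autoref{th:Rieszmonotonicity}(a)) gives $\operatorname{Cap}_{-2}(K)\le\operatorname{Cap}_{q_k}(K)$, and applying the already-proved inequality at $q_k$ yields $\operatorname{Cap}_{-2}(K)\le (2/3)^{1/p-1/q_k}\capp(K)$ in case (a) and $\operatorname{Cap}_{-2}(K)\le 2^{1/q_k-1/p}\capp(K)$ in case (b); letting $k\to\infty$ the explicit right-hand sides decrease to the claimed bounds.

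The crux, and the expected main obstacle, is the support lemma---more precisely sharpening ``finitely supported'' to ``at most three points''. Strict subharmonicity of the potential $U(x)=\int|x-y|^{|q|}\,d\mu_q(y)$ (one computes $\Delta U=|q|^2\int|x-y|^{|q|-2}\,d\mu_q>0$) immediately rules out a solid contact set, and a Bj\"orck-type analysis gives finiteness of $\operatorname{supp}\mu_q$; pinning the cardinality to $n+1=3$ in the plane for \emph{every} $q<-2$ is the delicate geometric step, tied to the fact that among planar measures of a given diameter the variance is maximized only by the three vertices of an equilateral triangle. Should the sharp count resist a direct proof for all such $q$, a fallback is to invoke existence of a maximizing set (\autoref{th:maximizer}, available since $p<0$) and run the support analysis only at an optimizer, where the Euler--Lagrange coupling of the $p$- and $q$-equilibrium measures supplies the extra rigidity needed to force three points.
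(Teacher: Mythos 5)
Your proposal is correct, and at its core it is the same proof as the paper's: reduce to two- and three-point sets using the fact that for $q<-2$ every $q$-equilibrium measure of a planar compact set sits on at most three points, apply \autoref{th:threeptratio}, and capture $q=-2$ by a one-sided limit. The ``support lemma'' that you flag as the crux and could not prove is not something you need to prove: it is exactly Bj\"{o}rck's Theorem 12 \cite{B56}, which states that for kernel exponent $-q>2$ any equilibrium measure of a compact $K\subset\Rn$ is supported on at most $n+1$ extreme points of the convex hull of $K$. The paper invokes this result by citation rather than proving it, so your lemma stands on published ground; and your fallback --- running the support analysis at an optimizer furnished by \autoref{th:maximizer} --- is literally the paper's route, since the paper deduces that it suffices to treat two- and three-point sets from \autoref{th:maximizer}, whose proof is Bj\"{o}rck's theorem plus a compactness argument. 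Where you differ, you actually streamline: (i) you apply the support theorem directly to an arbitrary compact $K$, getting $\capq(T)=\capq(K)$ and $\capp(T)\le\capp(K)$ for $T=\operatorname{supp}\mu_q$, thereby bypassing the existence machinery (Blaschke-type selection and Hausdorff convergence of capacities) altogether; (ii) at $q=-2$ you use monotonicity of the capacity in its parameter (\autoref{th:Rieszmonotonicity}(a)) where the paper uses continuity of $q\mapsto\capq(T)$ cited from \cite{CL24b} --- both work, and yours needs less. One correction to your heuristic picture: Bj\"{o}rck's collapse threshold is the kernel exponent $2$ in \emph{every} dimension $n\ge 2$, not the exponent $n$; the two coincide only in the plane, so your ``exponent greater than $n$ forces simplex vertices'' intuition gives the right boundary $q=-2$ here by coincidence and would mislead for $n\ge 3$.
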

The proof of the theorem is in \autoref{sec:2dtriangleproof}. A key ingredient is a result of Björck \cite[Theorem 12]{B56} which states for $p<-2$ that the equilibrium measure of $K\subset \Rn$ is supported on at most $n+1$ extreme points of the convex hull of $K$. Thus in the planar case we may reduce to an arbitrary three-point set whose $p$-capacity we get from \autoref{th:trianglecapformula}, and that result with \autoref{th:threeptratio} helps prove \autoref{th:2deqtriangle}. 

 A word of caution is warranted, because the regular three-point set and the two-point set are not the only maximizers for \autoref{th:2deqtriangle}. For instance, when $p<q<-2$, any subset of the equilateral triangle that contains its vertices will have the same $p$ and $q$-capacities, and will therefore also maximize the ratio in part (a) of the theorem. Similarly, for part (b), a ball and almost-degenerate triangle will (for many parameters) have the same capacity and equilibrium measure as a two-point set, by \cite[Theorem 4.6.6]{BHS19} and \autoref{th:trianglecapformula}, and so can also be maximizing sets. 
 
 Although the maximizing sets differ in these examples, the underlying equilibrium measures are admittedly the same. Surprisingly, even the measures can differ. In \autoref{sec:2dtriangleproof} we construct a maximizing set for the capacity ratio when $q<p<-2$ that supports two different $p$-equilibrium measures. One of those equilibrium measures is supported not on a $2$-point subset of the maximizer but rather on a $3$-point subset. 

\section{\bf Higher dimensions: results and open problems}
\label{sec:conjectures}

%----------------------------------------------------------------------------------------------------------------------------------

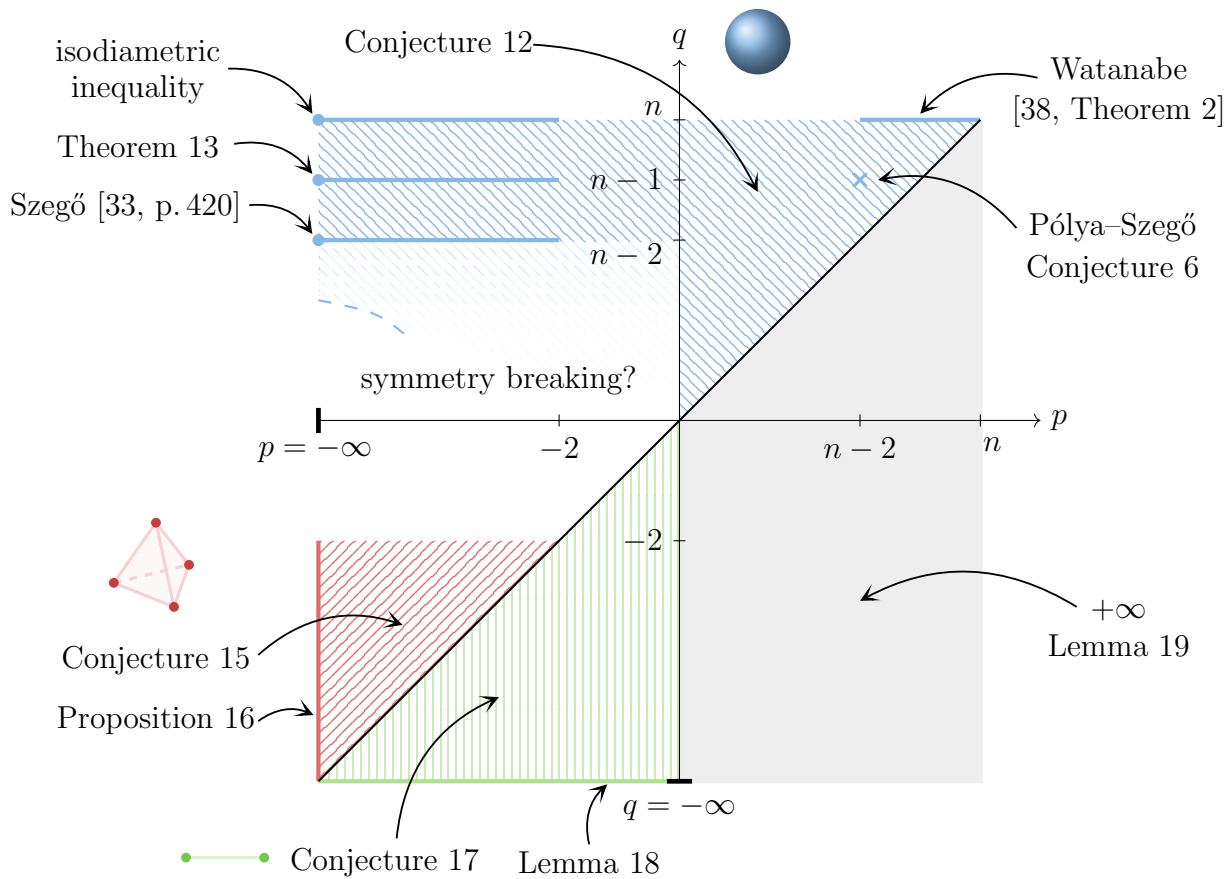
\begin{figure}
\begin{tikzpicture}[scale=0.8]

\def\n{5}
\def\pmax{6}
\def\pmin{-6}
\def\qmax{6}
\def\qmin{-6}

%%%%%%%%%%%%%%    shading   %%%%%%%%%%%%%

% speculative region with fading
\filldraw[top color=ballcolor!40, bottom color=white, middle color=ballcolor!5, draw=none] (\pmin+0.02,\n-3) -- (\pmin+0.02,\n-2.02) -- (0,\n-2.02) -- (0,0);
\filldraw[pattern={Lines[angle=-45,distance=3pt,line width=2.4pt, xshift=2pt]}, pattern color=white, draw=none] (\pmin+0.02,\n-3) -- (\pmin-0.2,\n-2) -- (0,\n-2) -- (0,0);
\filldraw[pattern={Lines[angle=-45,distance=3pt,line width=2.4pt, xshift=2pt]}, pattern color=white, draw=none] (\pmin+0.02,\n-3) -- (\pmin-.2,\n-2) -- (0,\n-2) -- (0,0);
\draw[white, ultra thick] (\pmin,\n-3) -- (0,0);
% ball conjecture 
\filldraw[pattern={Lines[angle=-45,distance=3pt, line width=0.5pt]}, pattern color=ballcolor, draw=none] (0,0) -- (\n, \n) -- (\pmin, \n) -- (\pmin, \n-2.02) -- (0, \n-2.02) ;
%region where p>max(0,q) (max of ratio is +infty)
\filldraw[color=mygray, draw=mygray, ultra thick] (\n,\n) -- (\n,\qmin) -- (0,\qmin) -- (0,0);
% two-point set conjecture
\filldraw[pattern={Lines[angle=90,distance=3pt, line width=0.5pt]}, pattern color=twoptcolorshading, draw=none] (\pmin,\qmin) -- (0, \qmin) -- (0, 0)  ;
% regular simplex conjecture
\filldraw[pattern={Lines[angle=45,distance=3pt, line width=0.5pt]}, pattern color=simplexcolor, draw=none] (\pmin,\qmin) -- (\qmin, -2) -- (-2, -2)  ;

%%%%%%%%%%%%%    lines and points     %%%%%%%%%%%%%%%

% line between (n,n) and (n-2,n) 
\draw[-, ballcolor, ultra thick] (\n-2,\n) to (\n,\n);
% solid blue lines for p\leq -2
\draw[-, ballcolor, ultra thick] (\pmin,\n) to (-2,\n);
\draw[-, ballcolor, ultra thick] (\pmin,\n-1) to (-2,\n-1);
\draw[-, ballcolor, ultra thick] (\pmin,\n-2) to (-2,\n-2);
% point at (-infty,n) - isodiametric
\draw[ballcolor,fill=ballcolor] (\pmin,\n) circle (.5ex);
% point at (-infty, n-2) 
\draw[ballcolor,fill=ballcolor] (\pmin,\n-2) circle (.5ex);
% point at (-infty, n-1)
\draw[ballcolor,fill=ballcolor] (\pmin,\n-1) circle (.5ex);
% simplex line on bottom left
\draw[-, ultra thick, simplexcolor] (\pmin,\qmin) -- (\pmin, -2);
% two-point set line at q=-infty 
\draw[twoptcolorshading, ultra thick] (0, \qmin ) -- (\pmin,\qmin);
% q(n)
%\draw (\pmin,\n-3) node[left] {$q(n)$};
% curved line from q(n) 
\draw[ballcolor, dash pattern=on 5pt off 5.1pt, thick] (\pmin,\n-3)  .. controls (\pmin+1,\n-3.2)  .. (\pmin+1.59,\n-3.7);
%\draw  (\pmin+1.54,\n-3.7) node[right] {?};
% x at Polya-Szego conjectured pt (n-2,n-1)
\draw[ballcolor, very thick] (\n-2+0.11,\n-1+0.11) -- (\n-2-0.11,\n-1-0.11) ;
\draw[ballcolor, very thick] (\n-2-0.11,\n-1+0.11) -- (\n-2+0.11,\n-1-0.11) ;

%%%%%%%%%%%%           labels and arrows      %%%%%%%%%%%%%%

%label for p>max(0,q)
\draw (\n+2.3,-3.5)  node[above] {$+\infty$};
\draw (\n+2.3,-4.1)  node[above] {\autoref{le:maxinfty} };
\myarrowR{(\n+1.6,-3.1)}{(\n-2,-3)}

% symmetry breaking
\draw (-3,0.25)  node[above] {symmetry breaking?};

% Polya Szego conjecture label and arrow
\draw (\n+2.2,\n-2.2)  node[above] {P\'{o}lya--Szeg\H{o}};
\draw (\n+2.2,\n-2.9)  node[above] {\autoref{conj:ps45}};
\myarrowR{(\n+0.6,\n-1.8)}{(\n-1.7,\n-1)}

% Ball labels
% conjectured region
\draw (-4,\n+0.85)  node[above] { \autoref{conj:riesz}};
\myarrowL{(-2.43,\n+1.25)}{(\n-3.7,\n-1.2)}
% (-infty,n) (isodiametric)
\draw (\pmin-3,\n+0.8) node[above] {isodiametric};
\draw (\pmin-3,\n+0.1) node[above] {inequality};
\myarrowL{(\pmin-1.5,\n+0.85)}{(\pmin-0.1,\n+0.1)}
% (-infty, n-1) 
\draw (\pmin-3,\n-0.8) node[above] {\autoref{th:isodiametric}};
\myarrowL{(\pmin-1.4,\n-0.4) }{(\pmin-0.1,\n-1+0.1)}
% (-infty,n-2) - Szego
\draw (\pmin-3.2,\n-2+0.1) node[above] {Szeg\H{o} \cite[p.{\,}420]{S31}};
\myarrowL{(\pmin-1.15,\n-1.42)}{(\pmin-0.1,\n-2+0.1)}
% top line (due to Watanabe)
\draw (\n+2.3,\n+0.5)  node[above] {Watanabe};
\draw (\n+2.3,\n-0.3)  node[above] {\cite[Theorem 2]{W83}};
\myarrowR{(\n+0.95,\n+0.7)}{(\n-1,\n+0.1)}
% ball picture
\shade[ball color=ballcolor] (1.3,\n+1.3) circle (3ex);

% two-point set labels
% conjectured region
\draw (-4.9,\qmin-1.76)   node[above] {\autoref{conj:twopoint}};
\myarrowL{(-4.7,\qmin-0.95) }{(-3,\qmin+1.4)}
% known - where q=-infty
\draw (-1.5,\qmin-1.7)   node[above] {\autoref{le:twopoint}};
\myarrowL{(-1.5,\qmin-1.05)}{(-1.2,\qmin-0.05)}
% two-point set picture
\def\tpx{\pmin-2.2} % x coordinate of left endpoint
\def\tpy{\qmin-1.27} % y coordinate of left endpoint
\def\tpl{1.3} % length
\draw[twoptcolorshading!45, very thick] (\tpx,\tpy) -- (\tpx+\tpl,\tpy);
\filldraw[twoptcolor] (\tpx,\tpy) circle (0.4ex);
\filldraw[twoptcolor] (\tpx+\tpl,\tpy) circle (0.4ex);

% (n+1)-point set labels
% conjectured region
\draw (\pmin-2.7,\qmin+1.6)   node[above] {\autoref{conj:simplex}};
\myarrowL{(\pmin-1,\qmin+2.1)}{(\pmin+1.4,\qmin+2.6)}
% regular (n+1)-point set theorem (p=-infty)
\draw (\pmin-2.7,\qmin+0.6)   node[above] {\autoref{pr:isodiamsimplex}};
\myarrowL{(\pmin-1,\qmin+1)}{(\pmin-0.08,\qmin+1.1)}
% drawing a (n+1)-point set
\def\sx{(\pmin-2.7} % x coordinate for the top vertex
\def\sy{-1.7} % y coordinate for the top vertex
\draw[simplexcolor!30, very thick, fill=simplexcolor!5] (\sx,\sy) -- (\sx-0.7,\sy-1) -- (\sx+0.3,\sy-1.4)  -- (\sx+0.55,\sy-0.7) -- cycle;
\draw[simplexcolor!30, very thick] (\sx,\sy) -- (\sx+0.3,\sy-1.4);
\draw[simplexcolor!30, dash pattern=on 3pt off 3pt, very thick] (\sx-0.7,\sy-1)  -- (\sx+0.55,\sy-0.7);
\filldraw[simplexpicturecolor] (\sx,\sy) circle (0.4ex);
\filldraw[simplexpicturecolor] (\sx-0.7,\sy-1)  circle (0.4ex);
\filldraw[simplexpicturecolor] (\sx+0.55,\sy-0.7) circle (0.4ex);
\filldraw[simplexpicturecolor] (\sx+0.3,\sy-1.4) circle (0.4ex);

%%%%%%%%%%%          axes         %%%%%%%%%%%%%%

% line p=q
\draw[black, thick]  (\qmin,\pmin) to (\n,\n);

% p-axis
\draw[->] (\pmin,0) to (\pmax,0);
\draw (\pmax,0)  node[right] {$p$};
% rectangle at p=-infty
\draw (\pmin-0.05,-0.1)  node[below] {$p=-\infty$};
\draw[black,fill=black] (\pmin-0.03,-0.2)  rectangle  (\pmin+0.03,0.2) ;

% q-axis 
\draw[->] (0,\qmin) to (0,\qmax);
\draw (0,\qmax)  node[above] {$q$};
%rectangle at q=-infty
\draw (0,\qmin-0.05)  node[below] {$q=-\infty$};
\draw[black,fill=black] (-0.2,\qmin-0.03)  rectangle  (0.2,\qmin+0.03) ;

% p-axis labels
\draw[-] (\n,-0.1) to (\n,0.1);
\draw (\n+0.2, -0.1) node[below] {$n$};
\draw[-] (\n-2,-0.1) to (\n-2,0.1);
\draw (\n-2, -0.1) node[below] {$n-2$};
\draw[-] (-2,-0.1) to (-2,0.1);
\draw (-2, -0.1) node[below] {$-2$};

% q-axis labels
\draw[-] (-0.1,-2) to (0.1,-2);
\draw (-0.1,-2) node[left] {$-2$};
\draw[-] (-0.1,\n-2) to (0.1,\n-2);
\draw (-0.1,\n-2-0.25) node[left] {$n-2$};
\draw[-] (-0.1,\n-1) to (0.1,\n-1);
\draw (-0.1,\n-1) node[left] {$n-1$};
\draw[-] (-0.1,\n) to (0.1,\n);
\draw (-0.1,\n+0.2) node[left] {$n$};

\end{tikzpicture}
\caption{\label{fig:pqdiagram} ($n \geq 3$) Maximizing $\capq(K)/\capp(K)$ for $K \subset \Rn$. Solid regions of the diagram and the  solid segments indicate rigorous results. Striped regions and dashed curves are conjectural. Blue corresponds to the ball, red to the regular $(n+1)$-point set, and green to the two-point set. For dimensions $n=1$ and $n=2$, see the additional results in \autoref{fig:pqdiagram1D} and \autoref{fig:pqdiagram2D}.}
\end{figure}

%----------------------------------------------------------------------------------------------------------------------------------

In all dimensions $n \geq 1$, we start by showing the ratio of Riesz capacities achieves a maximum, when $p<0$. 
\begin{theorem}[Maximizer exists when $p<0$] \label{th:maximizer}
Let $n \geq 1$ and $q<n$.

If $p<0$ then the ratio $\capq(K)/\capp(K)$ has a maximizer among the family of compact sets $K \subset \Rn$ with positive $p$-capacity. 

If $p<0$ and $q<-2$ then the maximizer can be taken to be a finite set consisting of between $2$ and $n+1$ points, each of which is an extreme point for the convex hull of the set. 

If $p<-1$ then the maximizer can be taken to be convex. 
\end{theorem}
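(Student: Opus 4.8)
\emph{The plan is to run the direct method of the calculus of variations, then use the two cited structural facts (Björck's support theorem and convexity of the potential) to upgrade the maximizer.} First, by scale and translation invariance of the ratio I would normalize each competitor so that $\diam(K)=1$ and $0\in K$, which forces $K\subseteq\overline{\B}^n$. Choosing a maximizing sequence $K_j$ with $R(K_j):=\capq(K_j)/\capp(K_j)\to M:=\sup R$, the Blaschke selection theorem yields a subsequence converging in the Hausdorff metric to a compact $K_\infty\subseteq\overline{\B}^n$. Since diameter is continuous under Hausdorff convergence, $\diam(K_\infty)=1$, so $K_\infty$ has more than one point and hence positive $p$-capacity; moreover \autoref{th:Rieszmonotonicity}(b) gives $\capp(K_j)\ge 2^{1/p}\diam(K_j)=2^{1/p}>0$, so the denominators stay bounded away from $0$ and no degeneration can occur in the limit.

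The crux of part one is the semicontinuity of the capacities along $K_j\to K_\infty$, and this is where I expect the main difficulty to lie. For $p<0$ the kernel $|x-y|^{-p}=|x-y|^{|p|}$ is continuous and bounded on $\overline{\B}^n\times\overline{\B}^n$, and $V_p$ is the \emph{maximum} of the weak-$*$ continuous functional $\mu\mapsto\iint|x-y|^{|p|}\,d\mu\,d\mu$; pushing equilibrium measures forward by nearest-point projections (to exploit lower Hausdorff convergence) and extracting weak-$*$ limits (for upper Hausdorff convergence) should show that $K\mapsto\capp(K)$ is continuous. For the numerator with $q<n$, the case $q<0$ is identical, while for $0\le q<n$ the kernel is only lower semicontinuous, so I would write $\mu\mapsto\iint|x-y|^{-q}\,d\mu\,d\mu$ (or the log energy) as a supremum of continuous truncations to get weak-$*$ lower semicontinuity of $V_q$; since $\capq=V_q^{-1/q}$ (respectively $\exp(-V_{log})$) is a decreasing continuous function of the energy, $\capq$ is upper semicontinuous. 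The delicate points to verify carefully are that weak-$*$ limits of equilibrium measures remain supported on $K_\infty$ and that the singular-kernel energy is genuinely lower semicontinuous. Combining continuity of $\capp$, upper semicontinuity of $\capq$, and the uniform lower bound on $\capp$ gives $R(K_\infty)\ge\limsup_j R(K_j)=M$, so $K_\infty$ maximizes.

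For part two ($p<0$, $q<-2$), I start from a maximizer $K^*$ and let $\nu$ be a $q$-equilibrium measure on it. By Björck \cite{B56}, $\nu$ is supported on a set $S$ of at most $n+1$ extreme points of the convex hull of $K^*$, and $V_q(K^*)>0$ rules out a single point, so $2\le|S|\le n+1$. Because $V_q$ is a \emph{maximum} over probability measures and $\nu$ lives on $S$, one has $V_q(S)\ge V_q(K^*)$, while $S\subseteq K^*$ gives the reverse inequality; hence $\capq(S)=\capq(K^*)$. Monotonicity of capacity under inclusion gives $\capp(S)\le\capp(K^*)$, so $R(S)\ge R(K^*)=M$ and therefore $R(S)=M$. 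Finally, each point of $S$ is extreme for $\operatorname{conv}(K^*)\supseteq\operatorname{conv}(S)$ and hence extreme for $\operatorname{conv}(S)$, giving the claimed finite maximizer.

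For part three ($p<-1$), I would replace a maximizer $K^*$ by its convex hull $\widehat K=\operatorname{conv}(K^*)$ and show $\capp(\widehat K)=\capp(K^*)$. The clearly true inequality $V_p(\widehat K)\ge V_p(K^*)$ needs only a reverse bound. Any $p$-equilibrium measure $\mu$ on $\widehat K$ has potential $U_\mu(x)=\int|x-y|^{|p|}\,d\mu(y)$ that is strictly convex, since $r\mapsto r^{|p|}$ is strictly convex for $|p|>1$; the variational inequality for the maximization shows $U_\mu\le V_p(\widehat K)$ on $\widehat K$ with equality on $\operatorname{supp}\mu$, and a strictly convex function attains its maximum over the convex body $\widehat K$ only at extreme points, all of which lie in $K^*$. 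Thus $\mu$ is carried by $K^*$, giving $V_p(K^*)\ge V_p(\widehat K)$ and hence $\capp(\widehat K)=\capp(K^*)$. Since $K^*\subseteq\widehat K$ forces $\capq(\widehat K)\ge\capq(K^*)$ by monotonicity, we get $R(\widehat K)\ge M$, so the convex set $\widehat K$ is again a maximizer.
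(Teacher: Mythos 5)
Your proposal is correct and follows essentially the same route as the paper: a direct-method compactness argument (Blaschke selection plus Hausdorff-convergence continuity of $\capp$ and upper semicontinuity of $\capq$) for part one, Bj\"{o}rck's support theorem for part two, and passage to the convex hull with equilibrium measures living on extreme points for part three. The only differences are cosmetic: you normalize by diameter instead of by $\capp(K_i)=1$, and you sketch proofs of the two auxiliary facts that the paper instead cites --- the Hausdorff-convergence result (\autoref{pr:hausdorffdistance}) and Bj\"{o}rck's theorem that equilibrium measures for $p<-1$ are supported on extreme points --- your strict-convexity argument for the latter being a sound rendering of Bj\"{o}rck's proof.
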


The proof is in \hyperlink{proofof:th:maximizer}{\autoref*{sec:additionalproofs}}. The same proof shows that maximizers also exist for $p<0$ in the limiting case $q=n$ where the ratio is $\Vol_n(K)^{1/n}/\capp(K)$. 
We do not know how to prove existence of a maximizer in the remaining case of interest, when $0 \leq p < q < n$. 

Knowing a maximizer exists, one naturally wants to discover its shape. For which parameter values might the maximizer be a ball? The scope of our next conjecture is broad: it unifies the long-standing open problem due to P\'{o}lya and Szeg\H{o} for logarithmic and Newtonian capacity with classical results of Carleman and Szeg\H{o} that minimize logarithmic and Newtonian capacity under a volume constraint, and Watanabe's generalization to Riesz capacity in the $\alpha$-stable case. Isodiametric inequalities appear too, as boundary cases of the conjecture. See \autoref{fig:pqdiagram}. 
\begin{conjecture}[Balls] \label{conj:riesz}
Suppose $n \geq 1$ and $p<q$. If either $0 \leq p < q < n$ or else $p<0$ and $n-2 \leq q < n$ then among compact $K \subset \Rn$ with positive $p$-capacity, the closed $n$-ball maximizes 
\[
\frac{\capq(K)}{\capp(K)} \qquad \text{and} \qquad \frac{\Vol_n(K)^{1/n}}{\capp(K)} .
\]
\end{conjecture}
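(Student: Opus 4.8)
The plan is to turn the scale-invariant ratio into a constrained extremal problem and then attack it by rearrangement. By scale invariance I may normalize the competitor so that $\capp(K)=\capp(\overline{\B}^n)$; since $x\mapsto x^{-1/p}$ is monotone, this is the same as fixing the $p$-energy $V_p(K)=V_p(\overline{\B}^n)$, and because $q\in(0,n)$ the target inequality $\capq(K)\le\capq(\overline{\B}^n)$ becomes $V_q(K)\ge V_q(\overline{\B}^n)$ (the map $x\mapsto x^{-1/q}$ reverses the inequality). Thus the conjecture is equivalent to asserting that, among all compact sets of a given $p$-capacity, the ball minimizes the $q$-energy; phrased for capacities, the ball loses the \emph{least} capacity in passing from exponent $p$ to exponent $q$. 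The volume functional $\Vol_n(K)^{1/n}/\capp(K)$ is exactly the endpoint $q=n$ of this scheme, via \autoref{th:Rieszmonotonicity}(c). The fundamental difficulty, already flagged in the introduction, is visible here: $V_p$ and $V_q$ are governed by \emph{different} equilibrium measures, so a single rearrangement of $K$ cannot optimize both at once.

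The primary tool I would use is symmetrization, via the fractional Dirichlet-integral (Gagliardo seminorm) characterization of Riesz capacity valid for $0<q<n$ (with the logarithmic analogue at $q=0$), together with the P\'olya--Szeg\H{o} principle. Individually, this machinery already shows that \emph{for fixed volume} the ball minimizes each Riesz capacity; this is how the boundary cases are known --- Carleman and Szeg\H{o} for $\capzero,\capone$ in the plane, the dimensional analogue, and Watanabe's theorem for the edge $q\nearrow n$. The trouble is that a volume constraint pushes \emph{both} $\capp$ and $\capq$ in the \emph{same} direction, so it leaves the ratio indeterminate. What is needed instead is a rearrangement holding $\capp(K)$ fixed while strictly increasing $\capq(K)$ toward the ball --- a capacity-constrained symmetrization. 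I would therefore run a continuous Steiner symmetrization $K_t$ and seek a differential inequality $\frac{d}{dt}\log\bigl(\capq(K_t)/\capp(K_t)\bigr)\ge 0$, with equality only at balls; for $p<0$ the constraint on $\capp$ is a max-energy problem and would be handled by the complementary (increasing-kernel) rearrangement. Establishing the correct sign of this derivative --- comparing how the two capacities respond to one infinitesimal symmetrization --- is the technical heart of the approach.

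A complementary route interpolates in the exponent, anchored at $q=n$. With the normalization $V_p(K)=V_p(\overline{\B}^n)$, set $G(r)=\log\operatorname{Cap}_r(\overline{\B}^n)-\log\operatorname{Cap}_r(K)$, so that $G(p)=0$, while $G(n)\ge 0$ is precisely Watanabe's theorem (the ball maximizes volume for given $p$-capacity). If one could prove that $G$ is \emph{concave} on $[p,n]$, then $G$ would dominate its chord, giving
\[
G(q)\ \ge\ \tfrac{q-p}{n-p}\,G(n)\ \ge\ 0 \qquad (p<q<n),
\]
which is the conjecture. For a fixed measure $\mu$ the function $r\mapsto\log I_r(\mu)$, where $I_r(\mu)=\iint|x-y|^{-r}\,d\mu(x)\,d\mu(y)$, is convex (a two-sided Laplace transform in $r$), which is suggestive; but $V_r$ is a \emph{minimum} over $\mu$, and taking the minimum destroys this convexity. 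Proving the needed concavity of $G$ --- equivalently, a one-sided comparison of $\log\operatorname{Cap}_r(K)$ with $\log\operatorname{Cap}_r(\overline{\B}^n)$ across the whole range --- I expect to be as hard as the conjecture itself.

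The main obstacle, then, is the mismatch of equilibrium measures, which blocks both arguments at their crucial step. A single-measure relaxation makes this concrete: testing the $q$-equilibrium measure $\mu$ of $K$ in \emph{both} energies yields
\[
\frac{\capq(K)}{\capp(K)}\ \le\ I_q(\mu)^{-1/q}\,I_p(\mu)^{1/p},
\]
reducing the problem to maximizing a \emph{one}-measure attraction--repulsion functional of the type studied by Lim--McCann and their collaborators, whose optimizers are indeed balls or regular simplices. Unfortunately this relaxation is not tight at the ball, since the ball's $p$- and $q$-equilibrium measures differ, so it overestimates the sharp ratio. For this reason I do not expect a complete proof with present techniques: the single point $(p,q,n)=(0,1,2)$ is the open \autoref{conj:ps45}, where the best known bound (Szeg\H{o}, via conformal mapping on connected sets) is still $7\%$ from optimal. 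Realistic partial progress would come from (i) restricting to connected sets and exploiting conformal or quasiconformal methods in dimensions $1$ and $2$, and (ii) a quantitative, stability-type symmetrization controlling the ratio near the ball.
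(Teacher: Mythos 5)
The statement you were asked to prove is \autoref{conj:riesz}, which is an open conjecture: the paper offers no proof of it, only supporting evidence (the known edge cases --- Watanabe's theorem for $n-2< p<q= n$, Carleman/Szeg\H{o} for the volume-constrained logarithmic and Newtonian cases, and the ellipsoidal examples of \autoref{sec:examples}) together with the observation that the single point $(p,q,n)=(0,1,2)$ is the still-open \autoref{conj:ps45}. Your submission correctly recognizes this: you do not claim a proof, and your concluding assessment --- that the mismatch of $p$- and $q$-equilibrium measures blocks every natural rearrangement or interpolation argument, and that a complete proof is out of reach of present techniques --- is exactly the state of the art as the paper presents it. Your single-measure relaxation inequality $\capq(K)/\capp(K)\le I_q(\mu)^{-1/q} I_p(\mu)^{1/p}$ (with $\mu$ the $q$-equilibrium measure of $K$) is correct for all the parameter ranges in question, and your diagnosis of why it is not tight at the ball is also correct.

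One inaccuracy in your strategic discussion should be flagged. In the interpolation route you anchor at $G(n)\ge 0$ and call this ``precisely Watanabe's theorem (the ball maximizes volume for given $p$-capacity).'' That anchor is only a theorem when $n-2\le p<n$ (Watanabe for $n-2<p<n$, Szeg\H{o}/Carleman at $p=n-2$); for $0\le p<n-2$ the statement that the ball minimizes $\capp$ among sets of given volume is itself a long-standing open problem, as the paper notes in its discussion of the edge cases of \autoref{conj:riesz}. So even granting the (probably unobtainable) concavity of $G$, the chord argument would prove the conjecture only in the strip $n-2\le p<q<n$, not in the full region $0\le p<q<n$. Similarly, your opening reduction assumes $q>0$, whereas the conjecture also permits $q\in[n-2,n)$ with $n=1$, where $q$ may be negative and the monotonicity of $x\mapsto x^{-1/q}$ reverses; this is a minor edge case but worth stating if the reduction is to be quoted. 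With those caveats, your analysis is sound --- it is an accurate map of the obstacles rather than a proof, which is all that can honestly be produced for this statement.
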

The conclusion for volume is a limiting case obtained by letting $q \to n$ and calling on \autoref{th:Rieszmonotonicity}(c). As explained below, that second conclusion (which is equivalent to minimizing capacity among sets with given volume) is known to hold in the ``$\alpha$-stable process case'' $n-2 \leq p < n$ by work of Watanabe. 

The question of maximizing the ratio of Riesz capacities with $0<p<q<n$ was raised by Szeg\H{o} \cite[p.{\,}140]{S56}, who wrote ``Nothing is known about this more general, possibly very difficult question.'' Presumably he thought the ball a likely candidate for the maximizer. Note he did not examine the limiting case $q \to n$, and so did not connect the capacity ratio conjecture to minimization of capacity under an area or volume constraint. 

The conjecture is confirmed for some special families of sets by the examples in \autoref{sec:examples}.

\subsubsection*{Edge cases of \autoref{conj:riesz} with fixed volume ($q=n$)} \ 

1. Riesz capacity is minimal for a ball, given fixed volume, in the ``$\alpha$-stable processes'' regime: when $n \geq 2$ and $n-2<p<n$, the conjecture says the Riesz $p$-capacity is minimal for the $n$-ball of the same volume; this assertion was proved by Watanabe \cite[p.{\,}489]{W83}. See also Betsakos \cite{B04a,B04b} and M\'{e}ndez--Hern\'{a}ndez \cite{MH06}. The corresponding statement for $0<p<n-2$ is a long-standing open problem. 

2. The endpoint case of Watanabe's theorem says that Newtonian $(n-2)$-capacity is minimal for a ball given fixed volume, which is known by an old result of Szeg\H{o} \cite{PS51}, \cite[Theorem 5.12]{B19}, for $n \geq 3$ and $p=n-2$. Similarly, for $n=2$ and $p=0$, logarithmic capacity is minimal for a disk of the same area, by work of Carleman and later Szeg\H{o} \cite{PS51}. 

3. The midpoint case of Watanabe's theorem ($p=n-1,q=n$) says in the planar case ($p=1,n=2$) that Newtonian capacity is minimal for a disk among planar sets of given area. That is a result of Pólya and Szeg\H{o} \cite[inequality (1.2)]{PS45}, obtained by symmetrization in the horizontal directions. That is, 
\[
\capone(K) \geq \frac{2}{\pi} \sqrt{\frac{\Area(K)}{\pi}} ,
\]
with equality when $K \subset \R^2$ is a disk. 

Rearrangement and polarization methods underlie these known cases of the conjecture, along with probabilistic and Dirichlet integral characterizations of the relevant capacities. 

These known cases all have $n-2 \leq p < q = n$. The conjecture for $(p,q)=(n-2,n-1)$ was raised by P\'{o}lya and Szeg\H{o} (see \autoref{conj:ps45} earlier). Evidence for \autoref{conj:riesz} is weak when $p<n-2$, which means in particular that the logarithmic case of the conjecture ($p=0$) is not well supported when $n \geq 3$. 

\smallskip
Now we examine the left side of \autoref{fig:pqdiagram}, where by \autoref{th:Rieszmonotonicity}(b), as $p \to -\infty$ the $p$-capacity reduces to diameter. 
%%
%\begin{theorem}[Isodiametric theorem for capacity and volume] \label{th:isodiametric}
%If $n \geq 2$ then among compact $K \subset \Rn$ with positive diameter, the closed $n$-ball maximizes 
%\[
%\frac{\capq(K)}{\diam(K)} 
%\]
%when $q=n-2$ or $q=n-1$. The analogue for $q=n$ is that the ball maximizes $\Vol_n(K)^{1/n}/\diam(K)$.
%\end{theorem}
%%
%
\begin{theorem}[Isodiametric theorem for capacity and volume] \label{th:isodiametric}
Suppose $n \geq 2$. If $q=n-2$ or $q=n-1$ then among compact $K \subset \Rn$ containing more than one point, the closed $n$-ball maximizes 
\[
\frac{\capq(K)}{\diam(K)} \qquad \text{and} \qquad \frac{\capq(K)}{\capp(K)} \quad \text{for} \ -\infty < p \leq -2 .
\]
When $q=n$, the analogous statements are that the ball maximizes $\Vol_n(K)^{1/n}/\diam(K)$ and $\Vol_n(K)^{1/n}/\capp(K)$ for $-\infty < p \leq -2$.
\end{theorem}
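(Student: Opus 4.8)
The plan is to reduce the capacity-ratio statements to isodiametric inequalities and then prove those by a difference-body argument built on Brunn--Minkowski. For $-\infty<p\le-2$ I would factor out the diameter,
\[
\frac{\capq(K)}{\capp(K)}=\frac{\capq(K)/\diam(K)}{\capp(K)/\diam(K)} .
\]
By \autoref{th:Rieszmonotonicity}(b) the denominator obeys $\capp(K)/\diam(K)\ge 2^{1/p}$ for every $K$, and the ball attains this bound because $\capp(\overline{\B}^n)=2^{1+1/p}$ and $\diam(\overline{\B}^n)=2$ give $\capp(\overline{\B}^n)/\diam(\overline{\B}^n)=2^{1/p}$ when $p\le-2$. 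Thus the ball already minimizes the denominator, and it remains only to show the ball maximizes the numerator $\capq(K)/\diam(K)$ --- which is exactly the $p=-\infty$ assertion of the theorem. Combining ``ball maximizes the numerator'' with ``ball minimizes the denominator'' yields the finite-$p$ claims, and replacing $\capq$ by $\Vol_n^{1/n}$ throughout gives the volume versions. Everything thus reduces to the isodiametric inequality: among compact sets of diameter $d$, the $n$-ball of diameter $d$ maximizes $\capq$ (resp.\ $\Vol_n^{1/n}$).

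For the isodiametric step I would first pass to the convex hull of $K$, which preserves the diameter and, by monotonicity of capacity, cannot decrease $\capq$; so assume $K$ convex with $\diam(K)=d$. Form the normalized difference body $\tilde K=\tfrac12(K-K)=\tfrac12 K+\tfrac12(-K)$, a centrally symmetric convex body with $\diam(\tilde K)=d$. Central symmetry forces $\tilde K\subseteq\tfrac{d}{2}\overline{\B}^n$, so monotonicity gives $\capq(\tilde K)\le\capq(\tfrac{d}{2}\overline{\B}^n)=\tfrac{d}{2}\capq(\overline{\B}^n)$. On the other hand, the capacitary Brunn--Minkowski inequality applied to $\tfrac12 K+\tfrac12(-K)$, together with the reflection invariance $\capq(-K)=\capq(K)$, gives $\capq(\tilde K)\ge\capq(K)$. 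Chaining the two estimates produces $\capq(K)\le\tfrac{d}{2}\capq(\overline{\B}^n)$, the desired isodiametric inequality; the identical chain with Lebesgue measure and the classical Brunn--Minkowski inequality settles $q=n$.

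It then remains to supply the correct capacitary Brunn--Minkowski inequality for each admissible exponent. When $q=n-2$ and $n\ge3$, $\capq$ is the Newtonian capacity and Borell's capacitary Brunn--Minkowski inequality \cite{B83} applies directly. For $q=n-1$ the crucial observation is a dimensional lifting: the kernel $|x-y|^{-(n-1)}$ on $\Rn$ is exactly the Newtonian kernel $|x-y|^{-((n+1)-2)}$ of $\Rnp$, and since the embedding $\Rn\hookrightarrow\Rnp$ preserves distances, the $(n-1)$-capacity of $K\subset\Rn$ equals the Newtonian capacity of $K$ regarded as a subset of $\Rnp$. Borell's inequality in $\Rnp$ then furnishes the required superadditivity, while the difference body $\tilde K$ stays inside the hyperplane $\Rn$, so the comparison body remains the $n$-ball. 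The borderline logarithmic case $q=n-2=0$ ($n=2$) is covered instead by the Brunn--Minkowski inequality for logarithmic capacity.

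I expect the main obstacle to be the careful application of capacitary Brunn--Minkowski in the lifted setting. In the case $q=n-1$, Borell's inequality is invoked in $\Rnp$ for the convex sets $K$ and $-K$, which are flat (codimension one) there, so one must confirm that the inequality persists for such lower-dimensional compact convex sets --- presumably by approximating with full-dimensional bodies and using continuity of Newtonian capacity under Hausdorff limits. A secondary point is to secure the logarithmic Brunn--Minkowski inequality on its own terms, since the lifting does not reach the logarithmic kernel. With these two inputs in hand, the reduction and the difference-body argument are routine.
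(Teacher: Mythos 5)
Your proposal is correct and is essentially the paper's own proof: the same factorization of $\capq(K)/\capp(K)$ through the diameter (with the ball extremal for each factor separately), the same convex-hull reduction and difference-body argument $K\mapsto\tfrac12(K-K)$ powered by capacitary Brunn--Minkowski, and the same lifting of the $q=n-1$ case to Newtonian capacity in $\Rnp$ that the paper credits to Muratov--Novaga--Ruffini. The only deviations are cosmetic: you control the denominator via \autoref{th:Rieszmonotonicity}(b) and the formula $\capp(\overline{\B}^n)=2^{1+1/p}$ instead of \autoref{le:twopoint}, you omit the paper's (logically unnecessary) Blaschke-selection step, and the flat-set degeneracy you flag in the lifted setting is resolved in the paper exactly as you propose, by $\e$-thickening together with the Hausdorff upper semicontinuity in \autoref{pr:hausdorffdistance}.
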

The statement about $\Vol_n(K)^{1/n}/\diam(K)$ is simply the classical isodiametric inequality. We have not seen the case $q=n-1$ stated in the literature. The statement when $q=n-2$ about $\capntwo(K)/\diam(K)$ is due to Szeg\H{o} \cite{S31} for $n=3$, by an argument that extends to all dimensions $n\geq 3$. 

The theorem is proved in \autoref{sec:isodiametric}, using known Brunn--Minkowski type inequalities for Riesz capacity. Note we do not consider $n=1$ in the theorem since that was handled already by \autoref{pr:onedim_upperleft}. 

We believe the last theorem should hold for all $q \in [n-2,n)$, and maybe for an even larger range of $q$-values. 
\begin{conjecture}[Isodiametric conjecture for Riesz capacity]
Let $n \geq 2$. A number $q(n) \leq n-2$ exists such that among compact sets $K \subset \Rn$, the ratio $\capq(K)/\diam(K)$ is maximal  for the ball when $q \in (q(n),n)$. 
\end{conjecture}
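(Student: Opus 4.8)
The plan is to reduce the isodiametric inequality for $\capq$ to a capacitary Brunn--Minkowski inequality, exactly as in the proof of \autoref{th:isodiametric}, and then to push the range of admissible $q$ as far down as possible. First I would reduce to convex bodies: replacing $K$ by its convex hull leaves $\diam(K)$ unchanged and does not decrease $\capq(K)$ by monotonicity, so it suffices to treat convex $K$. For such $K$, suppose that on a range of $q$ the capacity is Minkowski concave, meaning $\capq\big(\tfrac12 K+\tfrac12 L\big)\geq \tfrac12\capq(K)+\tfrac12\capq(L)$ for convex bodies $K,L$. Taking $L=-K$ and using the reflection invariance $\capq(-K)=\capq(K)$ (clear, since $|x-y|$ is unchanged under $x,y\mapsto-x,-y$) gives $\capq\big(\tfrac12(K-K)\big)\geq \capq(K)$. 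Writing $d=\diam(K)$, every point of the difference body $\tfrac12(K-K)$ has norm at most $d/2$, so $\tfrac12(K-K)\subseteq \tfrac{d}{2}\,\overline{\B}^n$; monotonicity and the scaling $\capq\big(\tfrac{d}{2}\overline{\B}^n\big)=\tfrac{d}{2}\capq(\overline{\B}^n)$ then yield $\capq(K)\leq \tfrac{d}{2}\capq(\overline{\B}^n)$, which is precisely maximality of the ball for $\capq(K)/\diam(K)$. Thus it is enough to prove the capacitary Brunn--Minkowski inequality for $\capq$, and one may take $q(n)$ to be the infimum of those $q$ for which it holds.

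Next I would try to establish this Minkowski concavity throughout the open interval $(n-2,n)$, which would give $q(n)\leq n-2$. The endpoints are already available: Borell's capacitary Brunn--Minkowski inequality \cite{B83} supplies the Newtonian case $q=n-2$, and the classical Brunn--Minkowski inequality for $\Vol_n^{1/n}$ supplies the volume limit $q=n$. For $n-2<q<n$ the $q$-capacity is the capacity associated with the $\alpha$-stable process of order $\alpha=n-q\in(0,2)$ --- the same regime in which Watanabe \cite{W83} proved volume-constrained minimization --- so the natural strategy is to run Borell's potential-theoretic argument in this setting: represent $\capq$ through its $\alpha$-harmonic equilibrium potential and prove a concavity principle showing that the potential of a Minkowski average dominates the average of the potentials, for each $\alpha\in(0,2)$.

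A softer alternative route to $q(n)\le n-2$ is interpolation in $q$. Fix convex $K$ with $\diam(K)=d$, let $B=\tfrac{d}{2}\overline{\B}^n$, and set $g(q)=\log\capq(K)-\log\capq(B)$. We already know $g(n-2)\leq 0$ and $g(n)\leq 0$ from \autoref{th:isodiametric}, so if $g$ is convex on $(n-2,n)$ then it lies below the chord joining two nonpositive endpoint values, and hence $g\leq 0$ on the whole interval. This would deliver $q(n)\le n-2$ without any Brunn--Minkowski inequality; the crux becomes proving convexity of $q\mapsto\log\capq(K)$ relative to the ball, for which the equilibrium-energy representation $\capq(K)=V_q(K)^{-1/q}$ together with H\"{o}lder-type inequalities in the exponent $q$ are the natural tools.

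The hard part, on either route, is the concavity (respectively convexity) step. Capacitary Brunn--Minkowski inequalities are delicate even at a single exponent --- the Newtonian case alone is Borell's deep theorem --- and a version valid for every $\alpha\in(0,2)$ for the stable capacities does not appear to be in the literature. Pushing the threshold strictly below $n-2$, as the conjecture optimistically permits, looks much harder: there the order $\alpha=n-q$ exceeds $2$, so the $\alpha$-stable probabilistic interpretation, and with it the maximum principles and hitting-probability arguments that drive the concavity proof, are no longer available. For this reason I expect the Brunn--Minkowski route can realistically yield $q(n)=n-2$, filling the interval $(n-2,n)$, whereas establishing any $q(n)<n-2$ will require genuinely new tools beyond the stable-process framework.
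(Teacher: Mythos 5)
The statement you are trying to prove is labeled a \emph{conjecture} in the paper, and the paper contains no proof of it: the authors only establish the special cases $q \in \{n-2,\,n-1,\,n\}$ (\autoref{th:isodiametric}), precisely because the capacitary Brunn--Minkowski inequality is known only for $p=n-2$ (Borell) and $p=n-1$ (Novaga--Ruffini), and they explicitly leave the rest open. Your first route reproduces, correctly, the paper's own proof of \autoref{th:isodiametric}: reduce to convex bodies, apply Minkowski concavity to $K$ and $-K$, and trap the difference body $\tfrac12(K-K)$ inside the ball of radius $\diam(K)/2$. That reduction is sound, and it does show that Brunn--Minkowski for $\capq$ on $(n-2,n)$ would yield the conjecture with $q(n)=n-2$. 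But the needed input --- Minkowski concavity of $\capq$ for all $q\in(n-2,n)$ --- is itself exactly \autoref{conj:capBM} in the paper (Novaga and Ruffini's Conjecture 3.3), which is open. So your proposal does not prove the statement; it reduces one open conjecture to another, and you supply no argument for the Brunn--Minkowski step beyond the suggestion to ``run Borell's potential-theoretic argument'' for $\alpha$-stable capacities, which is precisely the part nobody knows how to do.

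The interpolation route has an additional, concrete flaw. You need convexity of $g(q)=\log\capq(K)-\log\capq(B)$ on $(n-2,n)$, but this is a \emph{difference} of two functions of $q$, and convexity of such a difference does not follow from convexity (or log-convexity) of either term; differences of convex functions are generically not convex. Even the convexity of $q\mapsto\log\capq(K)$ alone is unclear: for a fixed measure $\mu$ the energy $q\mapsto\int\!\!\int|x-y|^{-q}\,d\mu\,d\mu$ is log-convex by H\"{o}lder, but $V_q(K)$ is an extremum over $\mu$ of such functions (a minimum when $q>0$), which destroys log-convexity in general, and the further factor $-1/q$ in $\log\capq(K)=-\tfrac1q\log V_q(K)$ compounds the problem. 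No H\"{o}lder-type inequality you cite controls the relative behavior of $K$ against the ball $B$, so this route is not a softer alternative but a second unproven conjecture. To your credit, you flag both obstacles honestly --- but the bottom line is that the proposal is a research program conditional on open problems, not a proof, and indeed no proof could match the paper's, since the paper offers none.
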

If the conjecture holds then $q(n) \to \infty$ as $n \to \infty$, because Burchard, Choksi and Hess-Childs \cite{BCH20} proved that for each $q>0$, the ball in $\Rn$ fails to maximize $\capq(K)/\diam(K)$ when $n$ is sufficiently large. 

Just as in the planar case treated in the previous section, we expect that a symmetry breaking threshold exists for the capacity ratio, indicated by the conjectural dashed curve in \autoref{fig:pqdiagram} with left endpoint at height $q(n)$. Above the threshold the ball would provide the maximizer, while some other shape would maximize below the threshold. To get an estimate on the location of the threshold one could compare the ball with the regular $(n+1)$-point set, like we did for $2$ dimensions in \autoref{th:symmetrybreaking2dim}. But it turns out that this comparison provides symmetry breaking only for a subregion of the third quadrant, where $q<0$, whereas when $n \geq 3$ one expects symmetry breaking to occur in parts of the second quadrant, where $q>0$. For example, a particle simulation by Burchard, Choksi and Hess-Childs \cite[Figure 3]{BCH20} in $3$ dimensions suggests that $q(3) \geq 0.01$ and that the maximizer in that case looks like the edge set of a rounded, regular tetrahedral shape. Further numerical investigations with $p>-\infty$ would be most welcome, in order to expand the second quadrant region where symmetry is known to break. 

When $(p,q)$ lies sufficiently below the symmetry breaking transition region, we believe the vertices of a regular simplex should form a maximizing set. 
\begin{conjecture}[Regular $(n+1)$-point set conjecture] \label{conj:simplex}
Suppose $n \geq 3$ and $p < q \leq -2$. Among compact sets $K \subset \Rn$ containing more than one point, the regular $(n+1)$-point set maximizes $\capq(K)/\capp(K)$.
\end{conjecture}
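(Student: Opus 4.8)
The plan is to imitate the planar argument of \autoref{th:2deqtriangle}(a): reduce the shape problem to finitely many points and then win the finite comparison. Since $p<q\le-2$ forces $p<-2<0$, \autoref{th:maximizer} supplies (for $q<-2$) a maximizer that is a finite set $K=\{x_0,\dots,x_m\}$ of between $2$ and $n+1$ points, each an extreme point of its convex hull, i.e.\ the vertex set of a simplex of dimension $m\le n$; the boundary value $q=-2$ I would recover at the end from continuity of $\capq$ in $q$. The reduction ultimately rests on Björck's support theorem \cite{B56}, valid because $p<-2$. It therefore suffices to prove the finite inequality
\[
\frac{\capq(K)}{\capp(K)}\le\Big(\frac{n}{n+1}\Big)^{1/p-1/q}
\]
for every such vertex set, with equality exactly for the regular $(n+1)$-point set. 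This is the precise $n$-dimensional analogue of \autoref{th:threeptratio}, and it is the crux.

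The regular simplex should be singled out by its rigidity. If $K$ is the regular $(n+1)$-point set with common edge length $d$, then every off-diagonal entry of the kernel matrix $\big(|x_i-x_j|^{-r}\big)$ equals $d^{-r}$, so for any probability vector $\mu$ the energy is $d^{-r}\big(1-\sum_i\mu_i^2\big)$; the equidistributed measure $\mu_i=1/(n+1)$ simultaneously extremizes this for every negative exponent, giving $V_r(K)=d^{-r}\,n/(n+1)$ and hence the ratio $(n/(n+1))^{1/p-1/q}$ for all parameters. The strategy is then to show that \emph{any} departure from equal edge lengths strictly lowers the ratio. First I would reduce to a full-dimensional $(n+1)$-vertex simplex: since capacity depends only on the distance data, a configuration of $k$ points lives intrinsically in $\R^{k-1}$, so by induction on dimension (with base cases \autoref{le:onedim_lowerleft} in $1$D and \autoref{th:2deqtriangle}(a) in $2$D) any lower-dimensional vertex set of $\ell+1$ points has ratio at most $(\ell/(\ell+1))^{1/p-1/q}$, which is strictly below the target because $m\mapsto(m/(m+1))^{1/p-1/q}$ is increasing (note $1/p-1/q>0$). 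So the genuine fight is over nondegenerate $(n+1)$-simplices.

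For that comparison I would try a variational argument on the distance data. Writing the ratio as a function of the vertices and invoking the envelope (Danskin) principle — the $r$-energy is a maximum over measures, so its derivative in a vertex coordinate is computed with the equilibrium measure $\mu_r$ held fixed — the first-order optimality conditions assert that at an optimal simplex the weighted forces exerted by $\mu_p$ and by $\mu_q$ are proportional at every vertex. The regular simplex manifestly satisfies these, with $\mu_p=\mu_q$ uniform, and the aim is to prove it is the unique such configuration up to similarity and then upgrade this critical point to a global maximum. A more robust alternative would be a majorization lemma: equalizing any two edge lengths increases $\capq/\capp$, so iterating drives an arbitrary simplex to the regular one.

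The main obstacle is exactly this finite comparison when $n\ge3$. Unlike the plane, there is no closed-form capacity of a general $(n+1)$-point simplex to drive the argument the way \autoref{th:trianglecapformula} drove \autoref{th:threeptratio}: the support of the equilibrium measure depends intricately on the edge lengths, it can differ between the parameters $p$ and $q$, and — as the paper itself notes — for $p\le-2$ the equilibrium measure need not even be unique, so the clean first-order conditions above are available only on the open stratum where the support is locally constant and full. Tracking how the support pattern jumps as the simplex is deformed, and proving monotonicity of the ratio under edge-equalization without an explicit formula, is where I expect the real difficulty to concentrate; this is presumably why only the case $n=2$ has so far been established.
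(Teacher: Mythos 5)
You have not proved this statement, and indeed you could not have by comparison with anything in the paper: \autoref{conj:simplex} is posed there as an \emph{open conjecture} for $n\geq 3$. The paper proves it only for $n=2$ (\autoref{th:2deqtriangle}(a)), notes it is immediate for $n=1$ (\autoref{le:onedim_lowerleft}), and settles only the edge case $p=-\infty$ in all dimensions (\autoref{pr:isodiamsimplex}). Your reduction steps are sound and mirror exactly the paper's strategy in the planar case: \autoref{th:maximizer} (via Bj\"{o}rck) legitimately reduces the problem, for $q<-2$, to vertex sets of at most $n+1$ points; the case $q=-2$ can be recovered by continuity of $\capq$ in $q$, just as in \autoref{sec:2dtriangleproof}; your computation of the regular simplex energy $V_r = d^{-r}\,n/(n+1)$ and the target ratio $(n/(n+1))^{1/p-1/q}$ agrees with \eqref{eq:kpoint}; and your induction-on-dimension scheme for degenerate configurations is structurally valid, since $(m/(m+1))^{1/p-1/q}$ is indeed increasing in $m$ (as $1/p-1/q>0$).

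The genuine gap is the one you yourself identify: the comparison over nondegenerate $(n+1)$-point configurations in $\Rn$ is never carried out. In the plane, this step was \autoref{th:threeptratio}, and its proof was driven entirely by the closed-form capacity of an arbitrary three-point set (\autoref{th:trianglecapformula}), which let the authors track the ratio explicitly as a function of the two free side lengths, including across the boundaries where the equilibrium measure's support changes from three points to two. No analogue of that formula exists for a general simplex when $n\geq 3$; there the support pattern of the equilibrium measure partitions the space of edge-length data into many strata, the supports for the exponents $p$ and $q$ can differ, and for $p<-2$ the equilibrium measure need not be unique (the paper's kite example), so your Danskin-type first-order conditions hold only stratum by stratum and do not by themselves exclude maximizers on strata boundaries. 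Your proposed edge-equalization monotonicity is precisely the unproven crux restated, not an argument for it. In short: your proposal is a correct framing of the problem with an honest identification of where the difficulty lies, but it leaves the conjecture exactly as open as the paper does.
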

The conjecture holds for $n=2$ by \autoref{th:2deqtriangle}(a), and for $n=1$ it is immediate from \autoref{le:onedim_lowerleft}. The edge case of the conjecture in the next proposition ($p=-\infty$, diameter) is due to Lim and McCann \cite[Corollary 2.2]{LM21}. Our proof in \hyperlink{proofof:pr:isodiamsimplex}{\autoref*{sec:additionalproofs}} is more direct, building on Bj\"{o}rck's result \cite{B56} that when $q<-2$, the equilibrium measure is supported on an $(n+1)$-point set. Our method gets the equality statement only when $q<-2$, though, and so we still rely on Lim and McCann for the equality case when $q=-2$.  
\begin{proposition}[Isodiametric inequality for capacity when $q \leq -2$] \label{pr:isodiamsimplex} Suppose $n\geq 2$ and $q \leq -2$. 

Among compact sets $K \subset \Rn$ containing more than one point, the regular $(n+1)$-point set maximizes the ratio $\capq(K)/\diam(K)$. 

Further, regular $(n+1)$-point sets are essentially the only maximizers: if $K$ is a maximizing set then every $q$-equilibrium measure on $K$ is a uniform probability measure on some regular $(n+1)$-point subset that possesses the same $q$-capacity and diameter as $K$. 
\end{proposition}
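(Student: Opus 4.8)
The plan is to work entirely at the level of energy. For $q \le -2$ put $s = -q \ge 2$; since $q<0$ the definition gives
\[
V_q(K)=\max_\mu \iint_K |x-y|^{s}\,d\mu(x)\,d\mu(y), \qquad \capq(K)=V_q(K)^{1/s},
\]
the maximum being over probability measures $\mu$ on $K$. Because the regular $(n+1)$-point set of diameter $D$ carries the uniform equilibrium measure with energy $\tfrac{n}{n+1}D^{s}$ and hence $q$-capacity $\bigl(\tfrac{n}{n+1}\bigr)^{-1/q}D$, the whole statement reduces to proving, for every compact $K$ of diameter $D$,
\[
V_q(K)\le \frac{n}{n+1}\,D^{s},
\]
together with the claim that equality forces every equilibrium measure to be uniform on a regular $(n+1)$-point subset.

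For the inequality I would argue pointwise-then-average. Fix a probability measure $\mu$ with barycenter $\bar x=\int x\,d\mu$. Since all mutual distances are at most $D$ and $s\ge 2$, we have $|x-y|^{s}\le D^{s-2}|x-y|^2$, so
\[
\iint |x-y|^{s}\,d\mu\,d\mu \le D^{s-2}\iint |x-y|^2\,d\mu\,d\mu = 2D^{s-2}\int |x-\bar x|^2\,d\mu .
\]
Next I bound the variance by a circumradius: if $c$ is the center and $R$ the radius of the smallest ball enclosing $\mathrm{supp}\,\mu$, then $\int|x-\bar x|^2\,d\mu\le\int|x-c|^2\,d\mu\le R^2$, and Jung's theorem gives $R\le D\sqrt{n/(2(n+1))}$. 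Combining yields $V_q(K)\le 2D^{s-2}\cdot \tfrac{n}{2(n+1)}D^2=\tfrac{n}{n+1}D^s$, and a direct computation shows the uniform measure on a regular $(n+1)$-point set of diameter $D$ attains this value. Hence that set maximizes $\capq(K)/\diam(K)$. Note this half uses no finiteness of support and covers all $q\le -2$ at once.

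For the rigidity when $q<-2$, suppose $K$ is a maximizer and $\mu$ is any $q$-equilibrium measure, so $\iint|x-y|^s\,d\mu\,d\mu=\tfrac{n}{n+1}D^s$ and every inequality above is an equality. Here I would invoke Björck's theorem \cite{B56}: for $q<-2$ the equilibrium measure is supported on at most $n+1$ extreme points of $\mathrm{conv}(K)$, say $\mu=\sum_{i=1}^m\mu_i\delta_{x_i}$ with $m\le n+1$ and all $\mu_i>0$. Because $s>2$, equality in $|x-y|^s\le D^{s-2}|x-y|^2$ forces $|x_i-x_j|=D$ for every $i\ne j$, so the support is an equilateral set of diameter $D$; then $V_q=\bigl(1-\sum_i\mu_i^2\bigr)D^s$, and matching $\tfrac{n}{n+1}D^s$ gives $\sum_i\mu_i^2=\tfrac{1}{n+1}$. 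Since $\sum_i\mu_i^2\ge 1/m\ge 1/(n+1)$ with equality in the first bound only for uniform weights and in the second only for $m=n+1$, the measure $\mu$ must be uniform on a regular $(n+1)$-point subset $T\subset K$; as $T$ has all distances $D$ it shares the diameter and, by the capacity formula, the $q$-capacity of $K$.

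The main obstacle is the boundary case $q=-2$ (that is, $s=2$): there the pointwise inequality $|x-y|^s\le D^{s-2}|x-y|^2$ degenerates into the identity $|x-y|^2=|x-y|^2$, so the first averaging step carries no rigidity and the equidistance conclusion collapses. The inequality itself is unaffected (the Jung step still applies), but I see no way to recover the equality characterization along this route, which is why I would cite Lim and McCann \cite[Corollary 2.2]{LM21} for the equality statement at $q=-2$. A secondary point worth flagging is that the reduction to finite, equidistant support is exactly what Björck's theorem supplies; without it one would have to rule out diffuse equilibrium measures concentrated on sphere-like equilateral configurations by a separate measure-theoretic argument.
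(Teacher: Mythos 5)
Your proposal is correct, and the main inequality is proved by a genuinely different route than the paper's. The paper first treats $q<-2$: it invokes Bj\"{o}rck's theorem to pass from $K$ to the finite support $E$ of an equilibrium measure, transports the atomic weights onto a regular $(n+1)$-point set $T$ with $\diam(T)=\diam(E)$, and observes that the energy can only grow, since all distances in $E$ are at most $\diam(E)$ while all distances in $T$ equal it; the endpoint $q=-2$ then needs a separate limiting argument ($q\nearrow -2$, using monotonicity of capacity in the exponent and the explicit simplex capacity formula). You instead bound $V_q(K)\le\tfrac{n}{n+1}D^{s}$ for an arbitrary probability measure by the pointwise reduction $|x-y|^{s}\le D^{s-2}|x-y|^{2}$, the variance identity, and Jung's theorem. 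What this buys: the inequality holds for all $q\le -2$ simultaneously, with no finiteness-of-support input and no limiting argument, so $q=-2$ is covered on an equal footing. The cost is importing Jung's theorem, a classical but nontrivial piece of convex geometry that encapsulates exactly the content (extremality of the regular simplex for circumradius given diameter) that the paper's comparison argument generates from scratch with only Bj\"{o}rck as input. Your rigidity argument for $q<-2$ is essentially parallel to the paper's: Bj\"{o}rck gives finite support; equidistance is forced by the equality case (in your version because $|x-y|^{s}<D^{s-2}|x-y|^{2}$ whenever $0<|x-y|<D$ and $s>2$, in the paper's by equality in its distance comparison); and the uniform weights together with $m=n+1$ follow from Cauchy--Schwarz, which the paper packages as uniqueness of the equilibrium measure on the regular $(n+1)$-point set. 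Both you and the paper defer to Lim and McCann for the equality statement at $q=-2$, and your diagnosis of why is exactly right: the rigidity mechanism degenerates at $s=2$.
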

Next, on the diagonal in \autoref{fig:pqdiagram} where $p=q$, the capacity ratio simply equals $1$. Crossing below the diagonal, we enter the region where $q<p$ and arrive at a new conjecture. 
\begin{conjecture}[Two-point conjecture] \label{conj:twopoint}
Suppose $n \geq 2$ and $q < p < 0$. Among compact sets $K \subset \Rn$ containing more than one point, any set with  exactly two points maximizes $\capq(K)/\capp(K)$.
\end{conjecture}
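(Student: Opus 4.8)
The plan is to establish the sharp inequality $\capq(K)/\capp(K) \le 2^{1/q - 1/p}$ for every compact $K$ with more than one point, the right-hand side being precisely the value attained by a two-point set (where $\capp = 2^{1/p}\diam$ and $\capq = 2^{1/q}\diam$); the claimed maximality is exactly this. Write $a = -p$ and $b = -q$, so $0 < a < b$ since $q<p<0$, and by scale invariance normalize $\diam(K) = 1$. For a probability measure $\mu$ on $K$ put $M_s(\mu) = \iint |x-y|^s\, d\mu\, d\mu$; then for negative exponents $V_p(K) = \max_\mu M_a(\mu)$ and $\capp(K) = V_p(K)^{1/a}$, and likewise $\capq(K) = V_q(K)^{1/b}$ with $V_q(K) = \max_\mu M_b(\mu)$.

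First I would fix a $q$-equilibrium measure $\mu_q$, which exists because the maximum defining $V_q$ is attained for $q<0$, and abbreviate $A = M_a(\mu_q)$, $B = M_b(\mu_q)$. Feeding $\mu_q$ into the variational problem for $V_p$ as a test measure gives $V_p(K) \ge A$, hence the lower bound $\capp(K) \ge A^{1/a}$, while $\capq(K) = B^{1/b}$ holds with equality. Therefore
\[
\frac{\capq(K)}{\capp(K)} \le \frac{B^{1/b}}{A^{1/a}} .
\]
The entire problem now collapses to two elementary properties of the single measure $\mu_q$. Since all pairwise distances lie in $[0,1]$ and $a<b$, we have $|x-y|^a \ge |x-y|^b$ pointwise, so $A \ge B$. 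And because $V_q$ is a maximum, testing it against $\tfrac12(\delta_{x_0} + \delta_{y_0})$ for two points $x_0, y_0 \in K$ realizing the diameter gives $B = V_q(K) \ge \tfrac12 \diam(K)^{b} = \tfrac12$.

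It then remains only to check the scalar inequality $\tfrac1b \log(2B) \le \tfrac1a \log(2A)$. Since $\log(2B) \ge 0$ and $1/b \le 1/a$, one has $\tfrac1b\log(2B) \le \tfrac1a \log(2B)$, and since $A \ge B$ this is at most $\tfrac1a\log(2A)$. Exponentiating and rearranging yields $B^{1/b}/A^{1/a} \le 2^{1/a - 1/b} = 2^{1/q - 1/p}$, which closes the chain; equality occurs for a two-point set, where $\mu_q$ is equidistributed and $A = B = \tfrac12$. Because the argument never uses the dimension or the sign of $q+2$, it would prove the conjecture for all $n \ge 1$ across the entire range $q < p < 0$, dispensing with the finite-support reduction through Bj\"orck's theorem used in \autoref{th:2deqtriangle}.

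The very shortness of this plan is the warning sign, and the step I would scrutinize hardest is the pair of structural inputs it leans on: that for each $p<0$ the energy $V_p$ is genuinely a maximum attained by a probability measure on $K$ (so that $\mu_q$ exists and $\capq = B^{1/b}$ is an identity rather than merely a bound), and that the normalization $\capp = V_p^{-1/p}$ indeed makes $\capp$ increasing in $V_p$ for $p<0$, as the sign $-1/p = 1/a > 0$ suggests. If both conventions are exactly as stated in the excerpt, then there is no residual obstacle and the conjecture follows in full; the honest task is therefore to confirm that nothing is lost in the non-uniqueness regime $q \le -2$ (where Bj\"orck's theorem indicates $\mu_q$ may be chosen finitely supported but need not be unique), since only existence of some maximizing $\mu_q$ is required above.
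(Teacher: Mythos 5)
First, a point of orientation: the paper does not prove this statement. \autoref{conj:twopoint} is posed as open, and the paper's rigorous results cover only the planar case with $q\leq -2$ (\autoref{th:2deqtriangle}(b)) and the edge case $q=-\infty$ (\autoref{le:twopoint}). So the comparison is necessarily with those partial results, and the real question is whether your argument stands on its own. After checking each step against the paper's conventions, I cannot find a flaw. With $a=-p$, $b=-q$ (so $0<a<b$) and $\diam(K)=1$, the paper indeed defines $V_p(K)=\max_\mu \int_K\!\int_K |x-y|^{a}\,d\mu\,d\mu$ with the maximum attained (the kernel is continuous and bounded on $K\times K$, so weak-$*$ compactness applies, and nonuniqueness of $\mu_q$ for $q\leq-2$ is irrelevant since only existence is used), and $\capp(K)=V_p(K)^{1/a}$. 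Hence your identity $\capq(K)=B^{1/b}$ and the bound $\capp(K)\geq A^{1/a}$, with $A=M_a(\mu_q)$ and $B=M_b(\mu_q)$, are exactly right; $A\geq B$ holds because all distances in $K$ are at most $1$ and $a<b$; $B=V_q(K)\geq 1/2$ follows from the diametral two-point trial measure; and the closing step $(2B)^{1/b}\leq (2B)^{1/a}\leq (2A)^{1/a}$ is valid precisely because $2B\geq 1$, $1/b\leq 1/a$, and $B\leq A$. Chaining gives $\capq(K)/\capp(K)\leq 2^{1/a-1/b}=2^{1/q-1/p}$, the value attained by every two-point set. I also tested this inequality against every explicitly computable example in the paper --- arbitrary three-point sets via \autoref{th:trianglecapformula}, regular $(n+1)$-point sets, the kite of \autoref{sec:2dtriangleproof} (which attains equality, consistent with your analysis since after normalization it has $V_p=V_q=\tfrac12$), and disks/balls --- and found no discrepancy.

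If correct --- and I could not break it --- your route is genuinely different from, and much stronger than, anything in the paper. The paper's proof of the special case \autoref{th:2deqtriangle}(b) runs through existence of a maximizer (\autoref{th:maximizer}, which needs the Hausdorff-convergence machinery of \autoref{pr:hausdorffdistance}), Bj\"{o}rck's theorem \cite{B56} to reduce to at most $n+1$ points (whence the restriction $q\leq -2$), the explicit triangle formula \autoref{th:trianglecapformula}, and the multi-step monotonicity analysis of \autoref{th:threeptratio}; none of that enters your argument, which works for all $n\geq 1$ and the full range $q<p<0$. The idea the paper's partial arguments lack is the coupling of numerator and denominator through the single measure $\mu_q$: bounding $\capq(K)\leq\diam(K)$ and $\capp(K)\geq 2^{1/p}\diam(K)$ separately, as \autoref{th:Rieszmonotonicity}(b) permits, yields only the weaker constant $2^{-1/p}$, whereas feeding $\mu_q$ into the variational problem for $V_p$ transfers the lower bound $V_q\geq\tfrac12\diam(K)^{-q}$ into the denominator. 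Since this would settle a conjecture the authors state as open, treat it with commensurate care: write it out in full, re-verify the capacity conventions against \cite{CL24b}, and stress-test it on sets whose $q$-equilibrium measure is far from a two-point measure. But as submitted, the argument withstands scrutiny, and as byproducts it reproves \autoref{th:2deqtriangle}(b) (including $q=-2$, without the limiting argument used there) and the inequality of \autoref{pr:onedim_lowermiddle}.
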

Two-point sets are not the only potential maximizing sets. For example, a line segment has its equilibrium measure supported at just two points, when $p \leq -1$, and so does a ball when $p < -2$; see \cite[Proposition 4.6.1 and Theorem 4.6.6]{BHS19}. Thus  different sets can both be maximizers, albeit in this example with the same underlying equilibrium measure. 

Note that one may restrict attention to finite sets with at most $n+1$ points in \autoref{conj:simplex} and in \autoref{conj:twopoint} when $q<-2$, in view of \autoref{th:maximizer}. 

In $2$ dimensions, \autoref{th:2deqtriangle}(b) largely proves \autoref{conj:twopoint}. In all dimensions, the edge case $q = -\infty$ of the conjecture is established by the following lemma, which we prove in \hyperlink{proofof:le:twopoint}{\autoref*{sec:additionalproofs}}.

\begin{lemma}[Minimizing capacity with given diameter] \label{le:twopoint}
Suppose $n \geq 1$ and $p<0$. Among compact sets $K \subset \Rn$ containing more than one point, each set with  exactly two points maximizes 
\[
\frac{\diam(K)}{\capp(K)} .
\]
\end{lemma}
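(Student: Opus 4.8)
The plan is to reduce the maximization to the two-sided bound $2^{1/p} \leq \capp(K)/\diam(K) \leq 1$ recorded in \autoref{th:Rieszmonotonicity}(b), and to verify that a two-point set saturates the lower bound. First I would compute the capacity of a two-point set. If $K=\{a,b\}$ with $|a-b|=d$, then every probability measure on $K$ has the form $\mu = t\delta_a + (1-t)\delta_b$; since $p<0$ the diagonal contributions $|a-a|^{-p}=|b-b|^{-p}=0$ vanish, so the energy equals $2t(1-t)d^{-p}$, which is maximized at $t=1/2$. Hence $V_p(K) = \tfrac{1}{2}d^{-p}$ and $\capp(K) = V_p(K)^{-1/p} = 2^{1/p}d$, giving $\diam(K)/\capp(K) = 2^{-1/p}$ for every two-point set, independent of the separation $d$.

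Next I would establish the matching universal upper bound $\diam(K)/\capp(K) \leq 2^{-1/p}$ for an arbitrary compact $K$ containing more than one point. This is exactly the lower inequality $\capp(K)\geq 2^{1/p}\diam(K)$ of \autoref{th:Rieszmonotonicity}(b), so it may simply be cited; but it also has a short self-contained proof worth including. Since $K\times K$ is compact and $(x,y)\mapsto|x-y|$ is continuous, the diameter is attained at some $a,b\in K$ with $|a-b|=d=\diam(K)$. Placing the test probability measure $\mu = \tfrac{1}{2}(\delta_a+\delta_b)$ on $K$ gives energy $\tfrac{1}{2}d^{-p}$, and because $p<0$ the quantity $V_p(K)$ is the \emph{maximum} of the energy over probability measures, so $V_p(K) \geq \tfrac{1}{2}d^{-p}$. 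Applying the increasing map $t\mapsto t^{-1/p}$ (note $-1/p>0$) yields $\capp(K) \geq 2^{1/p}d$, that is, $\diam(K)/\capp(K)\leq 2^{-1/p}$.

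Combining the two steps finishes the argument: the ratio never exceeds $2^{-1/p}$, and this value is attained by every two-point set, so each two-point set is a maximizer. There is essentially no hard step here; the only point requiring care is the bookkeeping of the exponent $-1/p$ together with the sign convention in the definition of $V_p$, namely that for $p<0$ the energy is a supremum, which is precisely what converts the easy test-measure lower bound on the energy into a lower bound on capacity. I would not attempt to characterize all maximizers, since the statement only asserts that two-point sets are \emph{among} the maximizing sets.
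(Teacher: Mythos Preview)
Your proof is correct and follows essentially the same approach as the paper: both arguments pick diametral points $a,b\in K$ and observe that the two-point subset $\{a,b\}$ has the same diameter but no larger $p$-capacity than $K$. The paper phrases this via monotonicity of capacity under set inclusion, while you unpack it into the explicit test-measure computation and the value $2^{-1/p}$; these are the same idea.
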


In the remaining region of \autoref{fig:pqdiagram}, the capacity ratio is unbounded according to the next lemma, proved in \hyperlink{proofof:le:maxinfty}{\autoref*{sec:additionalproofs}}.
\begin{lemma}[No maximum] \label{le:maxinfty}
If $0 \leq p < n$ and $q<p$ then a compact set $K \subset \Rn$ exists for which $\capq(K)$ and $\diam(K)$ are positive while $\capp(K)$ and $\Vol_n(K)$ are zero, so that 
\[
\frac{\capq(K)}{\capp(K)} = \infty 
\]
for this set, and similarly in the ``bottom edge'' case where $\capq$ is replaced by $\diam$ and the ``right edge'' case where $\capp$ is replaced by $\Vol^{1/n}$.
\end{lemma}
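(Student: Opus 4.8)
The plan is to tune the Hausdorff dimension of a single test set so that it falls strictly between the $q$- and $p$-thresholds, using the classical correspondence between Riesz capacity and Hausdorff dimension \cite[Theorem 4.3.1]{BHS19}: for a compact $K\subset\Rn$ and an exponent $s\in(0,n)$, one has positive $s$-capacity when $\dim_H(K)>s$ and zero $s$-capacity when $\dim_H(K)<s$ (indeed the capacitary dimension equals $\dim_H(K)$), while positive Hausdorff dimension forces positive logarithmic capacity. Granting a compact $K$ with $\diam(K)>0$, $\Vol_n(K)=0$, $\capp(K)=0$, and $\capq(K)>0$, all three asserted infinities follow at once: the numerators $\capq(K)$ and $\diam(K)$ are positive, while the denominators $\capp(K)$ (main and bottom-edge cases) and $\Vol_n(K)^{1/n}$ (right-edge case) vanish. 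So it suffices to build one such $K$.

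For the construction when $p>0$, I would fix a dimension $d$ with $\max(q,0)<d<p$, which is available because $\max(q,0)<p$ in every subcase ($q<p$ when $q\ge 0$, and $0<p$ when $q<0$), and take $K$ to be a self-similar Cantor set of Hausdorff dimension $d$ with $0<\mathcal{H}^d(K)<\infty$. Then $\diam(K)>0$ trivially; $\Vol_n(K)=0$ since $d<p<n$; $\capp(K)=0$ since $d<p$; and $\capq(K)>0$ since $d>\max(q,0)\ge q$, where the logarithmic endpoint $q=0$ is covered by positivity of logarithmic capacity for positive-dimensional sets and the case $q<0$ is immediate from the paper's convention that $\capq>0$ for any set with more than one point. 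In the degenerate endpoint $p=0$, which forces $q<0$, a positive-dimensional Cantor set of dimension below $p$ is unavailable, so instead I would take a countable compact set such as $\{0\}\cup\{1/k:k\ge 1\}$: it is polar, so $\capzero(K)=0$, yet it has positive diameter and positive $q$-capacity for $q<0$. The pure right-edge statement ($p=n$, $q<n$) is handled by the same Cantor construction with $d\in(\max(q,0),n)$, and the bottom-edge statement needs only $\diam(K)>0$ and $\capp(K)=0$, which the sets above already provide.

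The main obstacle here is not any deep estimate but careful bookkeeping at the two boundaries of the parameter region: the logarithmic endpoint $p=0$, where the Cantor set of dimension $d<p$ is not an option and must be replaced by a polar set of positive diameter, and the verification that $d$ can always be placed strictly between $\max(q,0)$ and the relevant upper threshold. One should also record explicitly that the single set produced simultaneously witnesses the main, bottom-edge, and right-edge claims, since each reduces to a positive quantity divided by a vanishing one.
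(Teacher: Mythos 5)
Your proof is correct and follows essentially the same route as the paper: choose a compact set whose Hausdorff dimension $d$ lies strictly between $\max(q,0)$ and $p$, then invoke the capacity--dimension correspondence \cite[Theorems 4.3.1 and 4.3.3]{BHS19} to get $\capq(K)>0$, $\capp(K)=0$, and $\Vol_n(K)=0$. The only difference is bookkeeping: the paper handles every case with $q<0$ (including the endpoint $p=0$) simply by taking $K$ to be a two-point set, whose $p$-capacity vanishes for all $p\geq 0$ because any probability measure on a finite set has an atom and hence infinite energy, so your polar countable set at $p=0$ and your Cantor set when $q<0<p$ do the job but are more elaborate than necessary.
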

Many such sets exist. For example, when $(p,q,n)=(1,0,2)$ we could take $K$ to be a line segment in the plane, which has positive   $0$-capacity and diameter along with vanishing Newtonian $1$-capacity and area.

\section{\bf Examples for \autoref{conj:ps45} and \autoref{conj:riesz}}
\label{sec:examples}

Explicitly computable capacity examples are plentiful in the planar logarithmic case thanks to the connection with conformal mapping. For Newtonian capacity only a few examples are computable, and even fewer for Riesz capacity. 

Formulas for the Riesz capacity of the unit ball in all dimensions are collected in \cite[Appendix A]{CL24b}. Notice that our definition of Riesz capacity takes the $p$-th root of the energy. In this choice we follow Hayman and Kennedy \cite{HK76}, while  other authors such as Landkof \cite{L72} and Borodachov, Hardin and Saff \cite{BHS19} do not take the root. Landkof also multiplies by a constant factor. Thus when consulting those sources, one must remember to modify their formulas accordingly. 

Newtonian capacities of line segments, disks, balls, ellipsoids and some other sets are computed in Hayman \cite[{\S\S}7.3.1, 7.3.2]{H89}, Landkof \cite[pp.{\,}165, 172]{L72}, Rumely \cite[{\S}5.2]{R89}, and Szeg\H{o} \cite[{\S\S}12--17]{S45}. Note that Landkof's kernel for Newtonian energy in $n=3$ dimensions is $\pi/|x|$ whereas ours is $1/|x|$, and so his $1$-capacity values should be multiplied by $\pi$. 

\subsection*{Elliptical examples for \autoref{conj:ps45}} The ratio $\capone(K)/\capzero(K)$ in \autoref{conj:ps45} equals $0$ for a segment and $2/\pi$ for a disk. We may interpolate between these two extremes with a family of filled-in ellipses having semi-axes $a=1$ and $b \in [0,1]$. The logarithmic capacity of this ellipse is $(1+b)/2$ and its $1$-capacity equals $1/F(\sqrt{1-b^2})$ where $F$ is the complete elliptic integral of the first kind with input parameter $k=\sqrt{1-b^2}$: for this last fact see Hayman \cite[Example 7.21, Theorem 7.14]{H89} (take $m=3$ and semiaxes $1,b,0$) or Landkof \cite[pp.{\,}165, 172]{L72} or Szeg\H{o} \cite[{\S}12]{S45}. Note too that Tee \cite[{\S\S}8-10]{T05} makes interesting observations. 

Thus the ratio of $1$-capacity over logarithmic capacity for the ellipse is  
\[
\frac{2}{(1+b) F(\sqrt{1-b^2})} .
\]
This ratio increases from the line segment value $0$ to the disk value $2/\pi$ as $b$ increases from $0$ to $1$, as one may confirm graphically. Hence \autoref{conj:ps45} holds for the family of filled-in ellipses. 

\subsection*{Higher dimensional ellipsoidal examples for \autoref{conj:riesz}}
Take $p=n-2$ and $q=n-1$. The $(n-1)$-capacity of the closed unit ball in $\Rn, n \geq 2$, is 
\[
\left( \frac{\Gamma(n/2)}{\Gamma(1/2) \Gamma((n+1)/2)} \right)^{\! \! 1/(n-1)}
\]
and the Newtonian $(n-2)$-capacity equals $1$; see \cite[Appendix A]{CL24b} for these formulas. Hence the last displayed expression is exactly the conjectured maximum value for the capacity ratio in \autoref{conj:riesz}, when $(p,q)=(n-2,n-1)$. For example, substituting $n=2$ gives $2/\pi$ while $n=3$ gives $1/\sqrt{2}$ for the conjectured maximum value. 

When $n=3$, we can gain insight into the conjecture by considering the family of solid $3$-dimensional ellipsoids $E(b)$ with semi-axes $1,1,b$, where $b \in [0,1]$, so that the ellipsoids interpolate between the $2$-ball ($b=0$) and the $3$-ball ($b=1$). The $1$-capacity of the ellipsoid is 
\[
\capone(E(b)) = \frac{\sqrt{1-b^2}}{\arcsin \sqrt{1-b^2}} ,
\]
by evaluating \cite[Theorem 7.14]{H89} with dimension $m=3$ and semiaxes $1,1,b$; note this formula yields the correct endpoint values $2/\pi$ for the $2$-ball as $b \to 0$ and $1$ for the $3$-ball as $b \to 1$. The $2$-capacity of the ellipsoid is 
\[
\captwo(E(b)) = \left( \frac{\sqrt{1-b^2}}{2 \arcsinh \sqrt{b^{-2}-1}} \right)^{\! \! 1/2} ,
\]
this time by evaluating \cite[Theorem 7.14]{H89} with dimension $m=4$ and semiaxes $1,1,b,0$. Notice $\captwo(E(b)) \to 0$ as $b \to 0$, meaning that a $2$-ball has $2$-capacity equal to $0$, and $\captwo(E(b)) \to 1/\sqrt{2}$ as $b \to 1$, which correctly gives  the $2$-capacity of the $3$-ball. 

The ratio $\captwo(E(b))/\capone(E(b))$ can be plotted numerically. One finds that it increases from the $2$-ball value $0$ to the $3$-ball value $1/\sqrt{2}$ as $b$ increases from $0$ to $1$. Thus \autoref{conj:riesz} is confirmed within this family of ellipsoids, for $n=3$ with $(p,q)=(1,2)$. 

\subsection*{Examples where the dimension varies} In any given dimension, the ball and sphere are the only sets for which a capacity formula is known to us for all $p$, although when $p<0$ one does have formulas for arbitrary three-point sets and regular $k$-point sets by \autoref{th:trianglecapformula} and \autoref{sec:threepoint}. 

Taking a different perspective, let us regard the ambient dimension as variable. If \autoref{conj:riesz} is correct then balls of higher dimension should give larger values for the ratio, since the $n$-ball can be regarded as a set in $\Rnp$ and there the $(n+1)$-ball should be the maximizer. That is, if one fixes $0<p<q$ and plots $\capq(\overline{\B}^n)/\capp(\overline{\B}^n)$ as a function of $n \in [q,\infty)$ then the resulting curve should be  increasing. This monotonicity holds true for every choice of parameters we have examined. See \autoref{fig:npq} for a plot with $p=2,q=8$ and $n \geq 8$. 
\begin{figure}
\begin{center}
\includegraphics[width=0.5\textwidth]{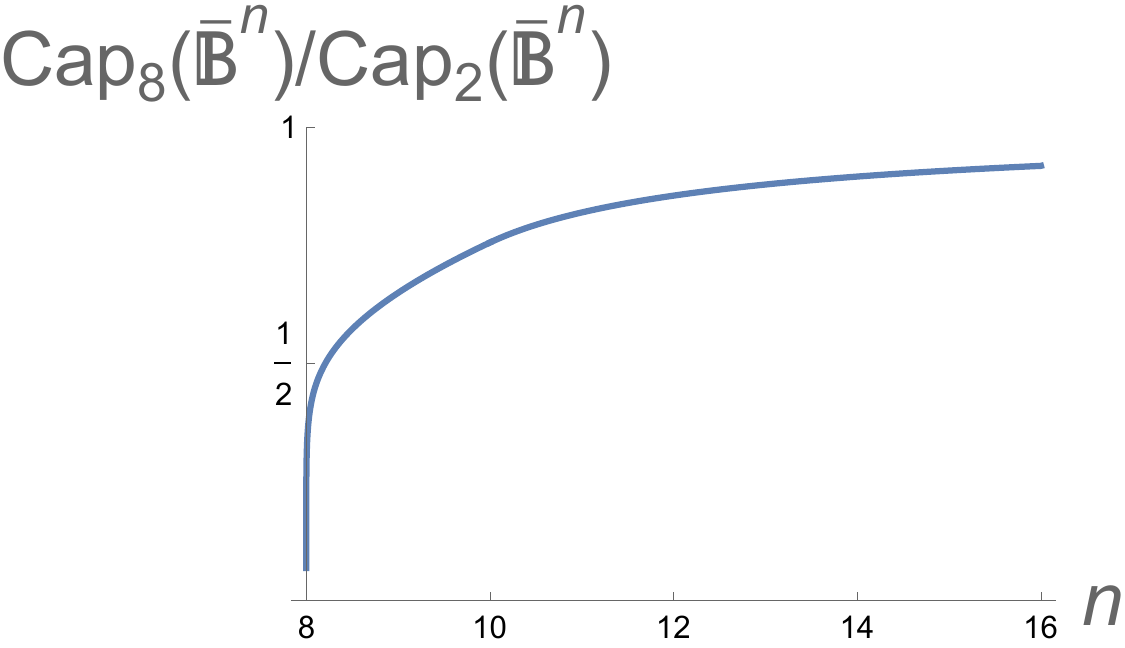}
\end{center}
\caption{\label{fig:npq} Plot of $\capq(\overline{\B}^n)/\capp(\overline{\B}^n)$ as a function of $n$, for $p=2$ and $q=8$. The ratio increases with $n$, in accordance with \autoref{conj:riesz}.}
\end{figure}

\section{\bf Proof of \autoref{th:onedim}: minimizing capacity for given length, in $1$ dimension}
\label{sec:onedimproof}

If $K$ has length zero then there is nothing to prove. 

Suppose from now on that $\Length(K)>0$ and $p<1$, which implies $\capp(K)>0$ (for example, by \autoref{th:Rieszmonotonicity}). After rescaling we may suppose $\Length(K)=2$, and by translation we may suppose the leftmost point of $K$ sits at $x=0$. Write 
\[
I=[0,2]
\]
for the translation of $\overline{\B}^1$ by $1$ unit to the right. We first show 
\[
\capp(K) \geq \capp(I) 
\] 
and then later deal with the equality statement in the theorem. 

A mapping of $[0,\infty)$ onto $I$ is given by $f(x)=\Length(K \cap [0,x])$, noting that $f$ measures the amount of the set $K$ lying to the left of $x$. Clearly $f$ is increasing, continuous, and a contraction, since  
\[
0 \leq x \leq y \qquad \Longrightarrow \qquad 0 \leq f(y)-f(x) \leq y-x .
\]
Notice $f(x)=2$ for all sufficiently large $x$, since $K$ is bounded and has measure $2$. We could now conclude $\capp(K) \geq \capp(I)$ by the known result, valid in all dimensions, that capacity decreases under contraction \cite[p.{\,}158]{L72}. But in the current $1$-dimensional case, it is more helpful for the equality case later to argue directly as follows. 

We will construct a right-inverse of $f$ that is expanding. For $t \in I$, define 
\begin{align*}
g(t) 
& = \min \{ x \geq 0 : f(x)=t \} \\
& = \text{leftmost point of $f^{-1}(\{t\})$},
\end{align*}
so that $f(g(t))=t$ by definition and hence $g$ is a right-inverse for $f$. Clearly $g(0)=0$, and $g$ is strictly increasing and hence Borel measurable. 

We claim $g$ maps $I$ into $K$. This claim is clear for $t=0$, since $0 \in K$. So suppose $0<t \leq 2$. Let $x=g(t)$, so that $x>0$ and $f(x)=t$. If $x \notin K$ then some interval immediately to the left of $x$ would lie outside $K$ and so $f$ would equal $t$ on that whole interval, implying $g(t)<x$, a contradiction. Therefore $x \in K$, as wanted. 

Further, $g$ is expanding:  
\begin{equation} \label{eq:expanding}
0 \leq s \leq t \leq 2 \qquad \Longrightarrow \qquad t-s \leq g(t)-g(s) 
\end{equation}
since $t-s=f(g(t))-f(g(s)) \leq g(t)-g(s)$ by the contraction property of $f$. 

We may push measures forward from $I$ onto $K$ via $g$: given a probability measure $\nu$ on $I$, let $\mu = \nu \circ g^{-1}$ be its pushforward by $g$, so that $\mu$ is a probability measure on $K$.  If $0<p<1$ then 
\begin{align}
V_p(K) 
& \leq \int_K \int_K |x-y|^{-p} \, d\mu d\mu \notag \\
& = \int_I \int_I |g(s)-g(t)|^{-p} \, d\nu d\nu \qquad \text{since $\mu = \nu \circ g^{-1}$} \notag \\
& \leq \int_I \int_I |s-t|^{-p} \, d\nu d\nu \label{eq:gbound}
\end{align}
because $g$ is expanding. Taking the minimum over all probability measures $\nu$, we conclude that $V_p(K) \leq V_p(I)$ and hence $\capp(K) \geq \capp(I)$. 

If $p=0$, the argument is the same except using the logarithmic kernel. If $p<0$ then the argument is essentially the same, except with the direction of each inequality reversed and with a maximum  instead of a minimum taken over the measures $\nu$. 

\subsection*{Equality statements} 
Take $\nu$ in the argument above to be the equilibrium measure on the interval $I$, so that the energy in \eqref{eq:gbound} equals $V_p(I)$. (The explicit formula for the equilibrium measure of an interval is recalled in \cite[Appendix A]{CL24b}.) Note that $\nu([0,\e])>0$ for each $\e>0$, since otherwise the energy could be driven strictly lower (when $p \geq 0$) or higher (when $p<0$) by stretching out $\nu$ to the full interval. Similarly $\nu([2-\e,2])>0$ for each $\e>0$.

\smallskip
``$\Longrightarrow$'' Suppose equality holds in the theorem, so that $\capp(K) = \capp(I)$ in our proof above. Equality must hold in \eqref{eq:gbound} and so $|g(s)-g(t)|=|s-t|$ for $(\nu \times \nu)$-almost every $(s,t) \in I \times I$. In particular, this equation holds for some points $s_j \searrow 0$ and $t_j \nearrow 2$, so that equality holds in \eqref{eq:expanding} for $s_j$ and $t_j$. The expanding property of $g$ then forces equality to hold in \eqref{eq:expanding} for all intermediate points $s,t$, with $0 \leq s_j<s<t<t_j \leq 2$. Taking $j \to \infty$ we obtain that $g(t)-g(s)=t-s$ whenever $0 <s<t<2$. Let $s$ tend to $0$ and write $ \lim_{s \searrow 0} g(s) = x_*$ so that $g(t)=x_*+t$ when $t \in (0,2)$. Since $g$ maps $I$ into $K$, we see $x_*+(0,2) \subset K$. Taking the closure, the nondegenerate interval $J=x_*+I$ is contained in $K$. Let $Z = K \setminus J$. 

If $p \leq -1$ then because capacity equals $2^{1/p}$ times diameter by \autoref{th:Rieszmonotonicity}(b), we have 
\[
2^{1/p} \diam(K)=\capp(K)=\capp(I)=\capp(J)=2^{1/p} \diam(J) .
\]
Thus $K$ and $J$ have the same diameter, and so $K$ cannot contain any points in addition to $J$. That is, $Z$ is empty and $K$ equals the nondegenerate interval $J$. 

Now suppose $-1<p<1$. The equilibrium measure of $K$ is certainly unique, since $p>-2$. Because $K$ and its subset $J=x_*+I$ have the same capacity, they must have the same equilibrium measure, which is supported in $J$. Call that equilibrium measure $\gamma$. 

Consider $-1<p<0$. If $Z$ is nonempty then we may choose a point $z \in Z$ and let $\delta=\delta_z$ be the delta measure supported at that point. Using $(1-\e)\gamma+\e\delta$ as a trial measure for the energy of $K$, we know
\[
\int_K \! \int_K |x-y|^{-p} \, d\big((1-\e)\gamma+\e\delta\big) d\big((1-\e)\gamma+\e\delta\big) \leq V_p(K) = \int_K \! \int_K |x-y|^{-p} \, d\gamma d\gamma 
\]
for all $\e \in [0,1]$, with equality at $\e=0$. The coefficient at order $\e$ on the left side must therefore satisfy 
\[
\int_K \! \int_K |x-y|^{-p} \, d\gamma(x) d(\delta-\gamma)(y) \leq 0 ,
\]
or equivalently
\begin{equation} \label{eq:zineq}
\int_J |x-z|^{-p} \, d\gamma(x)  \leq V_p(J) . 
\end{equation}
The potential $y \mapsto \int_J |x-y|^{-p} \, d\gamma(x)$ is a continuous function of $y \in \R$ and tends to $\infty$ as $y \to \pm \infty$. Further, when $y$ lies to the left or right of $J$ the potential is concave (since $0<-p<1$ ensures concavity of the kernel). Hence the potential is strictly decreasing to the left of $J$ and strictly increasing to the right, while on $J$ it equals $V_p(J)$ by \cite[p.{\,}176]{BHS19}. Thus the potential at the point $z \notin J$ is strictly greater than $V_p(J)$, which contradicts \eqref{eq:zineq}. Hence no such point $z$ exists and thus $Z$ must be empty. 

Next consider $0<p<1$. The task is to show $Z$ has inner $p$-capacity zero. Suppose not, which means that some compact set $Z^\prime \subset K \setminus J$ has positive $p$-capacity. Let $\zeta$ be a probability measure on $Z^\prime$ that has finite $p$-energy. By arguing like above except with the inequality reversed and $\delta$ replaced by $\zeta$, we obtain 
\[
\int_K \! \int_K |x-y|^{-p} \, d\gamma(x) d(\zeta-\gamma)(y) \geq 0 ,
\]
or equivalently
\begin{equation} \label{eq:zineq2}
\int_{Z^\prime} \int_J |x-y|^{-p} \, d\gamma(x) d\zeta(y)  \geq V_p(J) . 
\end{equation}
The potential $y \mapsto \int_J |x-y|^{-p} \, d\gamma(x)$ is a continuous function of $y \in \R$ (see \cite[p.{\,}176]{BHS19}) and tends to $0$ as $y \to \pm \infty$, and is convex when $y$ lies to the left or right of $J$ (since $-p<0$ and so the kernel is convex). Hence the potential is strictly increasing to the left of $J$ and strictly decreasing to the right, while on $J$ it equals $V_p(J)$ by \cite[p.{\,}176]{BHS19}. Thus the potential at each point $y \in Z^\prime$ is strictly less than $V_p(J)$, contradicting \eqref{eq:zineq2}. Therefore no such set $Z^\prime$ exists and thus $Z$ must have inner capacity zero. 

Lastly, consider $p=0$. The argument of the previous paragraph applies again, using the logarithmic kernel instead of the Riesz kernel. 

\smallskip
``$\Longleftarrow$'' Suppose $0 \leq p<1$ and $K=J \cup Z$ for some nondegenerate interval $J$ and some set $Z$ with inner $p$-capacity zero. We may suppose $J$ and $Z$ are disjoint, so that $Z=K \setminus J$ and hence $Z$ is measurable and has inner capacity zero. Thus $\Length(Z)=0$ by \autoref{le:Zmzero} in the Appendix and so $\Length(J \cup Z) = \Length(J)$, while $\capp(J \cup Z) = \capp(J)$ by \autoref{le:Z}. Hence we may replace $K$ with $J$ in inequality \eqref{eq:caplength}, so that equality holds there.

\section{\bf Simplex examples, and proofs of \autoref{th:symmetrybreaking2dim} and \autoref{th:trianglecapformula}}
\label{sec:threepoint}

This section collects illustrative examples and proofs for $k$-point sets. 

\subsection*{Uniqueness for vertices of regular simplex (regular $(n+1)$-point set)} The capacity of a regular $k$-point set $S_k$ of diameter $d$ was found by Bj\"{o}rck \cite[p.{\,}264]{B56} to be 
\begin{equation} \label{eq:kpoint}
\capp(S_k) = \left( (k-1)/k \right)^{-1/p} d 
\end{equation}
when $p<0$, with unique equilibrium measure consisting of equal amounts of measure $1/k$ at each point of $S_k$.  For example, 
\[
\capp(S_2) = (1/2)^{-1/p} d , \qquad \capp(S_3) = (2/3)^{-1/p} d , \qquad \capp(S_{n+1}) = \left( n/(n+1) \right)^{-1/p} d .
\]
When $p \geq 0$ the capacity of $S_k$ is zero, since every measure on a finite set has infinite energy. 

Following is a short proof of Bj\"{o}rck's formula. Rescale to get diameter $1$, for simplicity. Write the points of the regular $k$-point set as $x_1,\ldots,x_k$, with distance $|x_i-x_j|=1$ between each pair of distinct points. Take $\mu$ to be a probability measure on the set and let $m_i$ be its measure at $x_i$, so that $\sum_i m_i=1$. The $p$-energy of $\mu$ is 
\begin{align*}
\sum_{i \neq j} m_i m_j  
& = \left( \sum_{i=1}^k m_i \right)^{\!\! 2} - \sum_{i=1}^k m_i^2 \\
& \leq \left( \sum_{i=1}^k m_i \right)^{\!\! 2} - \frac{1}{k} \left( \sum_{i=1}^k m_i \right)^{\!\! 2} \qquad \text{by Cauchy--Schwarz} \\
& = \frac{k-1}{k}   
 \end{align*}
 since $\sum_i m_i=1$. Equality holds if and only if each point carries the same amount of measure, meaning $m_i=1/k$ for each $i$, and thus that is the unique equilibrium measure. Raising the last expression to the power $-1/p$ gives Bj\"{o}rck's capacity formula for $S_k$. 

\subsection*{Nonuniqueness for vertices of nonregular simplex ($4$-point set)} When the simplex is nonregular, the equilibrium measure on the vertex set need not be unique, for $p<-2$. For example, construct a tetrahedron ($k=4$ vertices) as follows. Let $l=2^{-1/p}<2^{1/2}$, choose $h>0$ to satisfy $4h^2+(l^2/2)=1$, and define four points 
\begin{align*}
x_1 = (h,l/2,0) , & \quad 
x_2 = (h,-l/2,0) , \\
x_3 = (-h,0,l/2) , & \quad 
x_4 = (-h,0,-l/2) .
\end{align*}
The distances $d_{ij}=|x_i-x_j|$ are 
\[
d_{13} = d_{14} = d_{23} = d_{24} = 1 , \qquad d_{12} = d_{34} = l .
\]
Suppose $\mu$ is a measure on the four points with measure $m_i$ at $x_i$, so that $m_1+m_2+m_3+m_4=1$. The $p$-energy of this measure is 
\begin{align*}
\sum_{i \neq j} m_i m_j d_{ij}^{-p} 
& = 2(m_1 + m_2)(m_3 + m_4) + 2 l^{-p} (m_1 m_2 + m_3 m_4) \\
& \leq 2ab+(a^2+b^2) = (a+b)^2=1 ,
\end{align*}
where $a=m_1+m_2$ and $b=m_3+m_4$, so that $a+b=1$. Equality holds if $m_1=m_2=a/2$ and $m_3=m_4=b/2$, and so those choices give rise to equilibrium measures. Thus our $4$-point set $E=\{ x_1,x_2,x_3,x_4 \}$ has a whole one-parameter family of $p$-equilibrium measures, namely 
\[
\mu_a = \frac{a}{2} (\delta_1 + \delta_2) + \frac{1-a}{2} (\delta_3 + \delta_4) , \qquad a \in [0,1] ,
\]
where $\delta_i$ is the delta measure at $x_i$. The energy is $V_p(E)=1$ and $\capp(E)=1$.

\subsection*{Proof of \autoref{th:symmetrybreaking2dim}: regular three-point sets and disks}
\hypertarget{proofof:th:symmetrybreaking2dim}{For} the existence of the root $q_* \simeq -0.856$, observe that 
\[
2 \sqrt{\pi} \Gamma(1 - q/2) - 3 \Gamma((1 - q)/2) 
=
\begin{cases}
\pi-3>0 & \text{at $q=-1$,} \\
-\sqrt{\pi}<0 & \text{at $q=0$,} 
\end{cases}
\]
and so a root certainly exists between $-1$ and $0$. The root is unique since it satisfies 
\[
\frac{\log \Gamma(\frac{1-q}{2} + \frac{1}{2})  - \log \Gamma(\frac{1-q}{2})}{\frac{1}{2}} = 2 \log(3/2 \sqrt{\pi}) , 
\]
where the left side is a difference quotient for the strictly convex function $\log \Gamma$ and hence is strictly increasing with respect to $-q>0$. The numerical value $q_* \simeq -0.856$ is easily found by bisection or Newton's method. 

The capacity ratio of a disk $D \subset \R^2$ when $p \leq -2$ and $-2<q<0$ is  
\[
\frac{\capq(D)}{\capp(D)} = 2^{-1/p} \left( \frac{\sqrt{\pi} \Gamma(1-q/2)}{\Gamma((1-q)/2)} \right)^{\! \! 1/q} ;
\]
for example see \cite[Appendix A]{CL24b}. The capacity ratio of a regular three-point set $S_3$ is 
\[
\frac{\capq(S_3)}{\capp(S_3)} = (2/3)^{1/p-1/q}
\]
by the formula at the beginning of this section. 

Now suppose $p$ and $q$ satisfy the assumption in the theorem, that 
\[
-\infty < p < \frac{q \log(4/3)}{\log \frac{2 \sqrt{\pi} \Gamma(1 - q/2)}{3 \Gamma((1 - q)/2)}} , \qquad -2<q<q_* .
\]
Since $q<q_*$, our analysis above of the root $q_*$ shows that the denominator is positive. Thus the inequality may be rearranged to prove the desired conclusion of \autoref{th:symmetrybreaking2dim}, that the capacity ratio is larger for the regular three-point set:
\[
\frac{\capq(D)}{\capp(D)} < \frac{\capq(S_3)}{\capp(S_3)} .
\]

\subsection*{Proof of \autoref{th:trianglecapformula}: arbitrary three-point sets} 
\hypertarget{proofof:th:trianglecapformula}{Consider} a probability measure on the three points of $T$, with charges $x$ and $y$ lying distance $c$ apart, charges $y$ and $z$  separated by distance $a$, and charges $z$ and $x$ separated by distance $b$. The total measure is $x+y+z=1$. This measure has $p$-energy  
\[
F(x,y,z) = 2xyC+2yzA+2zxB = \begin{pmatrix} x & y & z \end{pmatrix} \begin{pmatrix} 0 & C & B \\ C & 0 & A \\ B & A & 0 \end{pmatrix} \begin{pmatrix} x \\ y \\ z \end{pmatrix}
\]
where for simplicity we have written $A=a^t,B=b^t,C=c^t$. Note $0 < A, B \leq C$. Letting $Q$ be the square matrix in this energy formula, we compute its inverse to be 
\[
Q^{-1} = \frac{1}{2ABC} \begin{pmatrix} -A^2 & AB & CA \\ AB & -B^2 & BC \\ CA & BC & -C^2 \end{pmatrix} .
\]

If $x, y$ or $z$ equals $0$ then $F(x,y,z) \leq C/2$, with equality if $z=0$ and $x=y=1/2$. (When $A,B<C$, equality holds only if $z=0$ and $x=y=1/2$.) We proceed to compare this ``boundary maximum'' value of the energy with a possible ``interior maximum''. 

Suppose now that $F$ has a critical point at $(x,y,z)$ in the region $x,y,z>0$, constrained by $x+y+z=1$. By taking the gradient of the energy and the constraint functional one finds, writing $\lambda$ for the Lagrange multiplier, that 
\[
\nabla F = 2 Q \! \begin{pmatrix} x \\ y \\ z \end{pmatrix} = \lambda \begin{pmatrix} 1 \\ 1 \\ 1 \end{pmatrix} .
\]
Solving for $x,y,z$ gives that 
 \begin{equation} \label{eq:xyz}
 \begin{pmatrix} x \\ y \\ z \end{pmatrix} = \frac{\lambda}{2} Q^{-1} \! \begin{pmatrix} 1 \\ 1 \\ 1 \end{pmatrix} = \frac{\lambda}{4ABC} 
 \begin{pmatrix} A(B+C-A) \\ B(C+A-B) \\ C(A+B-C) \end{pmatrix} .
 \end{equation}
Since $x>0$ and $B+C-A \geq B>0$, the first row in \eqref{eq:xyz} implies $\lambda>0$.  Since $z>0$, we deduce from the third row in \eqref{eq:xyz} that 
\[
A+B > C .
\]
Left-multiplying \eqref{eq:xyz} by $(1 \ 1 \ 1)$ and recalling the constraint $x+y+z=1$ enables us to solve for $\lambda$, finding  
\begin{equation} \label{eq:lagrangemult}
\frac{\lambda}{2} = \frac{2ABC}{2(AB+BC+CA)-(A^2+B^2+C^2)} .
\end{equation}
This denominator is positive, since it can be factored as 
\[
(\sqrt{A}+\sqrt{B}-\sqrt{C})(\sqrt{B}+\sqrt{C}-\sqrt{A})(\sqrt{C}+\sqrt{A}-\sqrt{B})(\sqrt{A}+\sqrt{B}+\sqrt{C}) 
\]
where the middle two factors are positive (recall $A,B \leq C$), and the first factor is positive too because $A+B>C$ implies that 
\[
\sqrt{A}+\sqrt{B}-\sqrt{C} \geq \frac{A+B}{\sqrt{C}} - \sqrt{C} > 0 .
\]

Summing up, we have shown that if $F$ has a constrained critical point where $x,y,z>0$, then the point is given by \eqref{eq:xyz} where necessarily $A+B>C$, and $\lambda$ is given by \eqref{eq:lagrangemult}. 

In the reverse direction, if $A+B>C$ then we may define $\lambda>0$ as in \eqref{eq:lagrangemult} and $x,y,z>0$ as in \eqref{eq:xyz}; the calculations above show that this $(x,y,z)$ is a constrained critical point. 

The energy at the constrained critical point equals
\[
\begin{pmatrix} x & y & z \end{pmatrix} Q \! \begin{pmatrix} x \\ y \\ z \end{pmatrix} = \begin{pmatrix} x & y & z \end{pmatrix} \frac{\lambda}{2} \begin{pmatrix} 1 \\ 1 \\ 1 \end{pmatrix} = \frac{\lambda}{2} .
\]
This energy strictly exceeds the boundary maximum energy $C/2$, since 
\begin{equation} \label{eq:lagrangemultagain}
\frac{\lambda}{2} = \frac{4AB \, (C/2)}{4AB-(A+B-C)^2} > \frac{C}{2}
\end{equation}
by rewriting the denominator in \eqref{eq:lagrangemult}. 

Hence if $A+B>C$ then the energy of $T$ is $\lambda/2$ as in \eqref{eq:lagrangemultagain}, and if $A+B \leq C$ then $A$ and $B$ are strictly less than $C$ and the energy of $T$ is $C/2$. In both cases, we have seen that the equilibrium measure on $T$ is unique. Finally, taking the $t$-th root of the energy gives the capacity formula stated in the proposition. 

\subsubsection*{Heron's formula} The denominator in \eqref{eq:lagrangemult} can be interpreted by Heron's formula as 
\[
16 \left( \text{area of triangle with sides $\sqrt{A}, \sqrt{B}, \sqrt{C}$} \right)^2.  
\]

\section{\bf Proof of \autoref{th:threeptratio}: optimizing capacity ratios amongst two- and three-point sets}
\label{sec:threeptratioproof}

If $T$ is a two-point set with distance $d$ between the points, then the $p$-capacity equals $(1/2)^{-1/p} d$ by a short computation (or take $k=2$ in \autoref{sec:threepoint}), and so the lower bound in the theorem is attained by every two-point set. If $T$ is a regular three-point set with distance $d$ between points, then the $p$-capacity equals $(2/3)^{-1/p} d$ by \autoref{sec:threepoint} and so the upper bound is attained. 

Suppose from now on that $T$ is a three-point set (not necessarily regular), with distances $a,b,c$ between the points and with $c$ the largest distance. Note $a+b \geq c$. For convenience, we rescale so that $c=1$, which means $0<a,b \leq 1$ and $a+b \geq 1$. The capacity ratio in the theorem depends only on $a$ and $b$: denote it by  
\[
R(a,b) = \frac{\capq(T)}{\capp(T)} .
\]
Write $r=-p$ and $s=-q$, so that 
\[
0<s<r .
\]
The goal is to prove $2^{1/r-1/s} \leq R(a,b) \leq (3/2)^{1/r-1/s}$. 

The proof breaks into five steps corresponding to the different parts of \autoref{fig:triangleparametrization}, which we recommend consulting, along with a concluding sixth step. The restriction $a+b\geq 1$ will be imposed only in that final step.

\subsubsection*{\textsc{Step 1}. $R(a,b)$ is constant in the region where $a+b \geq 1 $ and $a^{s}+b^{s}\leq 1$.} (This region is empty in case $0<s<1$.)

\textsc{Proof}. Since $0<a,b\leq 1$, we have $a^{r}+b^{r} \leq a^{s}+b^{s}\leq 1$ and so $R(a,b)=2^{1/r-1/s}$ is constant. Note that in this step and later ones, we use without comment the three-point set capacity formulas in \autoref{th:trianglecapformula}. 

\subsubsection*{\textsc{Step 2}. In the region where $a^{s}+b^{s}>1$ and $a^{r}+b^{r}< 1$, we have $\partial R/\partial a > 0$, so that $R$ is strictly increasing in $a$. By symmetry, also $\partial R/\partial b > 0$, so that $R$ is strictly increasing in $b$.} \

\textsc{Proof}. 
One computes $R(a,b) = 2^{1/r+1/s}/f(a,b)^{1/s}$ where  
\begin{align*}
f(a,b)
& = 4-(ab)^{-s}(a^{s}+b^{s}-1)^2 \\
& = 4 - b^{-s} \left( a^{s/2} - (1-b^s)a^{-s/2} \right)^{\! 2} ,
\end{align*}
where the term in parentheses is positive since $a^s+b^s>1$ by our assumptions in this step. Clearly $\partial f/\partial a<0$ since $1-b^s \geq 0$, and so $\partial R/\partial a > 0$ as claimed. Similarly $\partial R/\partial b > 0$. 

\subsubsection*{\textsc{Step 3}. Along the curve $a^{r}+b^{r}=1$, the ratio $R(a,b)$ is minimal at the endpoints and maximal at the middle point where $a=b$.} \ 

\textsc{Proof}. Note $a^{s}+b^{s}>1$. Recalling the formula for $R(a,b)$ from Step 2, we want to show along the curve $a^{r}+b^{r}=1$ that $f(a,b)$ is maximal at the endpoints and minimal at the point where $a=b$. Substituting $b=b(a)=(1-a^{r})^{1/r}$ yields that 
\[
f(a,b(a))=4-a^{-s}(1-a^{r})^{-s/r}(a^{s}+(1-a^{r})^{s/r}-1)^2 .
\]
Making the change of variables $z=a^{r}$, we have $f(a,b(a))=4-h(z)$ where
\[
h(z)=z^{-s/r}(1-z)^{-s/r}(z^{s/r}+(1-z)^{s/r}-1)^2 .
\]
By symmetry, it is enough to show $h$ is strictly increasing for $z \in (0,1/2)$. For simplicity, let $\gamma=s/r<1$. Then 
\[
h'(z)=\gamma z^{-\gamma-1}(1-z)^{-\gamma-1}(z^\gamma+(1-z)^\gamma-1)(z^\gamma-(1-z)^\gamma+1-2z) .
\]
The factor $z^\gamma+(1-z)^\gamma-1$ is positive since $a^{s}+b^{s}>1$. The final factor $k(z)=z^\gamma-(1-z)^\gamma+1-2z$ is also positive, because $k(0)=k(1/2)=0$ and $k''(z)=\gamma(\gamma-1)(z^{\gamma-2}-(1-z)^{\gamma-2})<0$ for $z\in (0,1/2)$, since $0<\gamma<1$. Hence $h^\prime(z)>0$ for $z\in (0,1/2)$, which completes this step.

%----------------------------------------------------------------------------------------------------------------------------------

\begin{figure}
\hspace{0.45cm}
\begin{tikzpicture}[scale=3.5]
\def\p{-4.4}
\def\q{-2.2}
% label
\draw(0.5,-0.25) node[above] {$1\leq s<r$};
%shaded region
\filldraw[domain=0:1,smooth,variable=\x,samples=100,gray!10] plot (\x,{(1-\x^(-\q))^(-1/\q)});
\filldraw[gray!10] (1,0) -- (1,0.1) -- (0,1);
% lines at a^-p+b^-p=1 and a^-q+b^-q=1
\draw[domain=0:1,smooth,variable=\x,samples=100,myorange, dashed] plot (\x,{(1-\x^(-\p))^(-1/\p)});
\draw[myorange] (1,0.7) node[right] {$a^{r}+b^{r}=1$};
\draw[domain=0:1,smooth,variable=\x,samples=100,mypurple, dashed] plot (\x,{(1-\x^(-\q))^(-1/\q)});
\draw[mypurple] (1,0.3) node[right] {$a^{s}+b^{s}=1$};
%axis
\draw[->, mymidgray] (0,-0.06) -- (0,1.1);
\draw (0,1.1) node[above] {$b$};
\draw[->, mymidgray] (-0.06,0) -- (1.1,0);
\draw (1.1,0) node[right] {$a$};
% number labels
\draw (0,0.94) node[left] {$1$};
\draw (0.94,0) node[below] {$1$};
%lines at a=1, b=1
\draw[-,mymidgray] (1,-0.06) -- (1,1.06);
\draw[-] (1,0) -- (1,1);
\draw[->,arrows = {-Stealth[length=3pt, inset=2.3pt, width=6pt]}] (1,0) -- (1,0.8);
\draw[-, mymidgray] (-0.06,1) -- (1.06,1);
\draw[-] (0,1) -- (1,1);
\draw[->,arrows = {-Stealth[length=3pt, inset=2.3pt, width=6pt]}] (0,1) -- (0.8,1);
% line a+b=1
\draw[-] (0,1) -- (1,0);
% node[right] {$a+b=1$};
%line a=b
\draw[->, arrows = {-Stealth[length=3pt, inset=2.3pt, width=6pt]}, dashed] ({2^(1/\p)},{2^(1/\p)}) --  (0.93,0.93);
\draw[-, dashed] (0.9,0.9) -- (1,1);
% node[right] {$a=b$};
%  arrows
\draw[arrows = {-Stealth[length=3pt, inset=2.3pt, width=6pt]}] (0.88,0.6) -- (0.88,0.66);
\draw[arrows = {-Stealth[length=3pt, inset=2.3pt, width=6pt]}] (0.89,0.59) -- (0.95,0.59);
\draw[arrows = {-Stealth[length=3pt, inset=2.3pt, width=6pt]}] (0.77,0.78) -- (0.77,0.84);
\draw[arrows = {-Stealth[length=3pt, inset=2.3pt, width=6pt]}] (0.78,0.77) -- (0.84,0.77);
\draw[arrows = {-Stealth[length=3pt, inset=2.3pt, width=6pt]}] (0.6,0.88) -- (0.66,0.88);
\draw[arrows = {-Stealth[length=3pt, inset=2.3pt, width=6pt]}] (0.59,0.89) -- (0.59,0.95);
\draw[arrows = {-Stealth[length=3pt, inset=2.3pt, width=6pt]}] (0.8,0.94) -- (0.86,0.94);
\draw[arrows = {-Stealth[length=3pt, inset=2.3pt, width=6pt]}] (0.94,0.8) -- (0.94,0.86);
%
% small arrows on myorange line
\draw[white, arrows = {-Stealth[color=black,length=3pt, inset=2.3pt, width=6pt]}] (0.75,0.927) -- (0.76,0.921);
\draw[white, arrows = {-Stealth[color=black,length=3pt, inset=2.3pt, width=6pt]}] (0.927,0.75) -- (0.921,0.76);
\end{tikzpicture}

\begin{tikzpicture}[scale=3.5]
\def\p{-2.3}
\def\q{-0.7}
% label
\draw(0.5,-0.25) node[above] {$s<1< r$};
%shaded region
%\filldraw[domain=0:1,smooth,variable=\x,samples=100,gray!10] plot (\x,{(1-\x^(-\q))^(-1/\q)});
%\filldraw[gray!10] (1,0) -- (1,0.1) -- (0,1);
% lines at a^-p+b^-p=1 and a^-q+b^-q=1
\draw[domain=0:1,smooth,variable=\x,samples=100,myorange, dashed] plot (\x,{(1-\x^(-\p))^(-1/\p)});
\draw[myorange] (1,0.55) node[right] {$a^{r}+b^{r}=1$};
\draw[domain=0:1,smooth,variable=\x,samples=100,mypurple, dashed] plot (\x,{(1-\x^(-\q))^(-1/\q)});
\draw[mypurple] (0,0.1) node[right] {$a^{s}+b^{s}=1$};
%axis
\draw[->, mymidgray] (0,-0.06) -- (0,1.1);
\draw (0,1.1) node[above] {$b$};
\draw[->, mymidgray] (-0.06,0) -- (1.1,0);
\draw (1.1,0) node[right] {$a$};
% number labels
\draw (0,0.94) node[left] {$1$};
\draw (0.94,0) node[below] {$1$};
%lines at a=1, b=1
\draw[-,mymidgray] (1,-0.06) -- (1,1.06);
\draw[-] (1,0) -- (1,1);
\draw[->,arrows = {-Stealth[length=3pt, inset=2.3pt, width=6pt]}] (1,0) -- (1,0.7);
\draw[-, mymidgray] (-0.06,1) -- (1.06,1);
\draw[-] (0,1) -- (1,1);
\draw[->,arrows = {-Stealth[length=3pt, inset=2.3pt, width=6pt]}] (0,1) -- (0.7,1);
% line a+b=1
\draw[-] (0,1) -- (1,0);
%node[right] {$a+b=1$};
%line a=b
\draw[->, dashed,arrows = {-Stealth[length=3pt, inset=2.3pt, width=6pt]}] ({2^(1/\p)},{2^(1/\p)}) --  (0.854,0.854);
\draw[-, dashed] (0.87,0.87) -- (1,1);
% node[right] {$a=b$};
%  arrows
\draw[->,arrows = {-Stealth[length=3pt, inset=2.3pt, width=6pt]}] (0.56,0.57) -- (0.56,0.65);
\draw[->,arrows = {-Stealth[length=3pt, inset=2.3pt, width=6pt]}] (0.57,0.56) -- (0.65,0.56);
\draw[->,arrows = {-Stealth[length=3pt, inset=2.3pt, width=6pt]}] (0.35,0.78) -- (0.35,0.86);
\draw[->,arrows = {-Stealth[length=3pt, inset=2.3pt, width=6pt]}] (0.36,0.77) -- (0.44,0.77);
\draw[->,arrows = {-Stealth[length=3pt, inset=2.3pt, width=6pt]}] (0.78,0.35) -- (0.86,0.35);
\draw[->,arrows = {-Stealth[length=3pt, inset=2.3pt, width=6pt]}] (0.77,0.36) -- (0.77,0.44);
\draw[->,arrows = {-Stealth[length=3pt, inset=2.3pt, width=6pt]}] (0.68,0.9) -- (0.76,0.9);
\draw[->,arrows = {-Stealth[length=3pt, inset=2.3pt, width=6pt]}] (0.9,0.68) -- (0.9,0.76);
%
%  arrows on myorange line
\draw[->,ultra thick, white, arrows = {-Stealth[color=black,length=3pt, inset=2.3pt, width=6pt]}] (0.55,0.879) -- (0.56,0.874);
\draw[->,ultra thick, white, arrows = {-Stealth[color=black,length=3pt, inset=2.3pt, width=6pt]}] (0.879-0.004,0.55+0.01) -- (0.874-0.004,0.56+0.01);
\end{tikzpicture}
\begin{tikzpicture}[scale=3.5]
\def\p{-0.8}
\def\q{-0.7}
% label
\draw(0.5,-0.25) node[above] {$s<r\leq 1$};
%shaded region
%\filldraw[domain=0:1,smooth,variable=\x,samples=100,gray!10] plot (\x,{(1-\x^(-\q))^(-1/\q)});
%\filldraw[gray!10] (1,0) -- (1,0.1) -- (0,1);
% lines at a^-p+b^-p=1 and a^-q+b^-q=1
\draw[domain=0:1,smooth,variable=\x,samples=100,myorange, dashed] plot (\x,{(1-\x^(-\p))^(-1/\p)});
\draw[myorange] (0.85,0.15) node[right] {$a^{r}+b^{r}=1$};
\draw[domain=0:1,smooth,variable=\x,samples=100,mypurple, dashed] plot (\x,{(1-\x^(-\q))^(-1/\q)});
\draw[mypurple] (0,0.1) node[right] {$a^{s}+b^{s}=1$};
%axis
\draw[->, mymidgray] (0,-0.06) -- (0,1.1);
\draw (0,1.1) node[above] {$b$};
\draw[->, mymidgray] (-0.06,0) -- (1.1,0);
\draw (1.1,0) node[right] {$a$};
% number labels
\draw (0,0.94) node[left] {$1$};
\draw (0.94,0) node[below] {$1$};
%lines at a=1, b=1
\draw[-,mymidgray] (1,-0.06) -- (1,1.06);
\draw[-] (1,0) -- (1,1);
\draw[->,arrows = {-Stealth[length=3pt, inset=2.3pt, width=6pt]}] (1,0) -- (1,0.5);
\draw[-, mymidgray] (-0.06,1) -- (1.06,1);
\draw[-] (0,1) -- (1,1);
\draw[->,arrows = {-Stealth[length=3pt, inset=2.3pt, width=6pt]}] (0,1) -- (0.5,1);
% line a+b=1
\draw[-] (0,1) -- (1,0);
%node[right] {$a+b=1$};
%line a=b
\draw[->, dashed,arrows = {-Stealth[length=3pt, inset=2.3pt, width=6pt]}] ({2^(1/\p)},{2^(1/\p)}) --  (0.74,0.74);
\draw[-, dashed] (0.5,0.5) -- (1,1);
% node[right] {$a=b$};
%  arrows
%
\draw[->,arrows = {-Stealth[length=3pt, inset=2.3pt, width=6pt]}] (0.9,0.35) -- (0.9,0.45);
\draw[->,arrows = {-Stealth[length=3pt, inset=2.3pt, width=6pt]}] (0.76,0.35) -- (0.76,0.45);
\draw[->,arrows = {-Stealth[length=3pt, inset=2.3pt, width=6pt]}] (0.62,0.35) -- (0.62,0.45);
\draw[->,arrows = {-Stealth[length=3pt, inset=2.3pt, width=6pt]}] (0.35,0.9) -- (0.45,0.9);
\draw[->,arrows = {-Stealth[length=3pt, inset=2.3pt, width=6pt]}] (0.35,0.76) -- (0.45,0.76);
\draw[->,arrows = {-Stealth[length=3pt, inset=2.3pt, width=6pt]}] (0.35,0.62) -- (0.45,0.62);
%
%  arrows on green line
\draw[->,ultra thick, white, arrows = {-Stealth[color=black,length=3pt, inset=2.3pt, width=6pt]}] (0.68-0.027,0.19+0.024) -- (0.67-0.027,0.2+0.024);
\draw[->,ultra thick, white, arrows = {-Stealth[color=black,length=3pt, inset=2.3pt, width=6pt]}] (0.19+0.01,0.68-0.01) -- (0.2+0.01,0.67-0.01);
\end{tikzpicture}
\caption{Parameter region for a three point set $T$ with side lengths $a,b,1$ where $0<a,b\leq 1$. The top diagram applies when $1\leq s < r$ (meaning  $p<q\leq -1$), the bottom left corresponds to $0<s<1< r$ (that is, $p<-1 <q<0$), and the bottom right to $0<s<r\leq1$ (equivalently $-1\leq p<q<0$). Arrows indicate directions of increase for the capacity ratio. The ratio is constant in the shaded region of the top diagram.} \label{fig:triangleparametrization}
\end{figure}
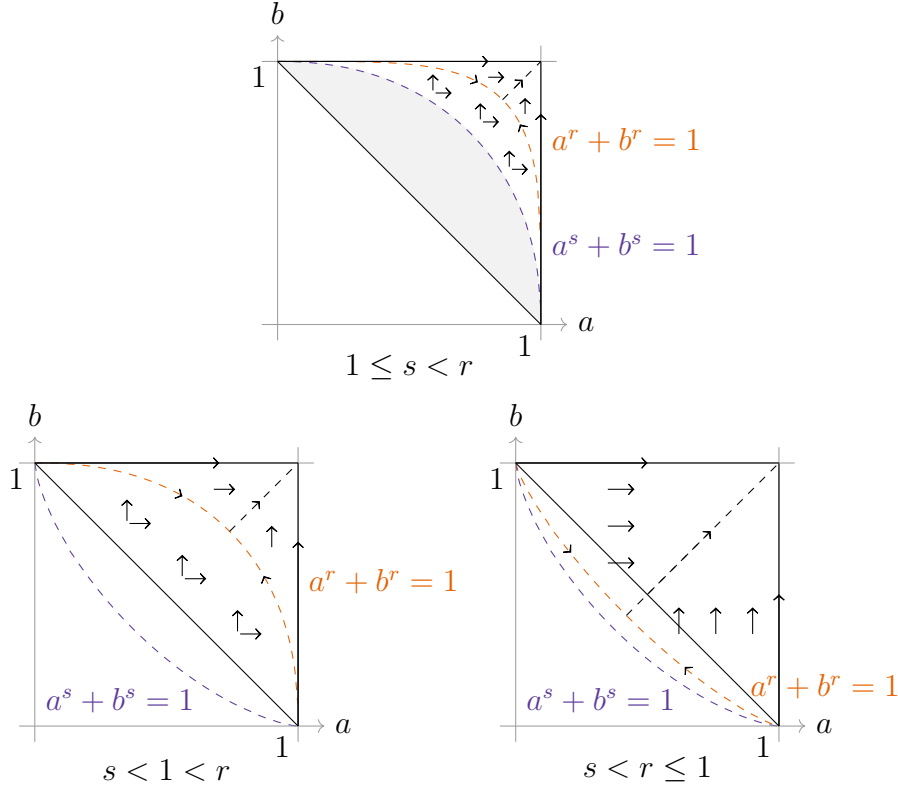

%----------------------------------------------------------------------------------------------------------------------------------

\subsubsection*{\textsc{Step 4}. $R(a,b)$ increases along the lines $a=1$ and $b=1$, and also along the line $a=b$ when $1/2<a^{r}<1$.} \ 

\textsc{Proof}. In all three situations, $a^r+b^r>1$ and $a^s+b^s>1$. 

When $a=1$ and $0<b<1$, we find $\capp(T)=2^{1/r}/(4-b^{r})^{1/r}$ and similarly for $\capq(T)$. Hence 
\[
R(1,b)=2^{1/s-1/r}\frac{(4-b^{r})^{1/r}}{(4-b^{s})^{1/s}} 
\]
and so 
\[
\frac{d\ }{db} \log R(1,b)= \frac{1}{b} \left( \frac{b^s}{4-b^s} - \frac{b^r}{4-b^r} \right) >0 ,
\]
since $b^s>b^r$. The calculation for $R(a,1)$ proceeds analogously.

When $a=b<1$ and $2a^{r}>1$, we have $\capp(T)=2^{1/r}a^2/(4a^{r}-1)^{1/r}$ and similarly for $\capq(T)$, and so
\[
R(a,a)=2^{1/s-1/r}\frac{(4a^{r}-1)^{1/r}}{(4a^{s}-1)^{1/s}} .
\]
Hence
\[
\frac{d\ }{da} \log R(a,a)= \frac{1}{a} \left( \frac{1}{4a^r-1} - \frac{1}{4a^s-1} \right) >0 ,
\]
since $1/2<a^r<a^s<1$.

\subsubsection*{\textsc{Step 5}.  In the region where $a^{r}+b^{r}>1$, if $0<a<b<1$ then $\partial R/\partial a > 0$ and so the ratio $R(a,b)$ is strictly increasing with respect to $a$, while if $0<b<a<1$ then $\partial R/\partial b > 0$ and $R(a,b)$ is strictly increasing with respect to $b$.}\ 

\textsc{Proof}. 
By symmetry, it suffices to consider the first claim, where $0<a<b<1$. 
The capacity ratio equals
\[
R(a,b) = 2^{1/s-1/r} \frac{(4(ab)^{r}-(a^{r}+b^{r}-1)^2 )^{1/r}}{(4(ab)^{s}-(a^{s}+b^{s}-1)^2 )^{1/s}} 
\]
and so
\[
\frac{\partial\ }{\partial a} \log R(a,b)=\frac{1}{a} \left( \frac{4(ab)^r-2a^r(a^r+b^r-1)}{4(ab)^{r}-(a^{r}+b^{r}-1)^2} -\frac{4(ab)^s-2a^s(a^s+b^s-1)}{4(ab)^{s}-(a^{s}+b^{s}-1)^2} \right).
\]
To show this quantity is positive, we fix $a$ and $b$ and aim to show $g(r)>g(s)$, where
\[
g(t)=\frac{4(ab)^t-2a^t(a^t+b^t-1)}{4(ab)^t-(a^{t}+b^{t}-1)^2}.
\]
Thus it suffices to prove that $g$ is strictly increasing for all $t>0$ such that $a^t+b^t>1$. 

Note the denominator of $g$ is positive by the proof of \autoref{th:trianglecapformula}, while the numerator is positive because it equals $2a^t(b^t-a^t+1)>0$.

Write $A=a^t, B=b^t$, so that  $A+B>1$ and $0<A<B<1$. We calculate that
\begin{align}
\frac{d}{d t} \log g(t) 
& = \frac{2}{t} (A \log A) \left( \frac{B-2A+1}{4AB-2A(A+B-1)}-\frac{B-A+1}{4AB-(A+B-1)^2} \right) \label{eq:aderivatives} \\ 
& + \frac{2}{t} (B \log B) \left( \frac{A}{4AB-2A(A+B-1)}-\frac{A-B+1}{4AB-(A+B-1)^2} \right). \label{eq:bderivatives} 
\end{align}
The expression in parentheses in \eqref{eq:aderivatives} is negative because cross multiplying gives
\begin{align*}
& (B-2A+1)(4AB-(A+B-1)^2)-(B-A+1)(4AB-2A(A+B-1) )\\ 
& = -(1-B)^2(1-A+B-A)-A^2(B+1)<0,
\end{align*}
since  $A<1$ and $A<B$.

Then, using $A \log A < B \log B$ (to be proved below), we may combine \eqref{eq:aderivatives} and \eqref{eq:bderivatives} to obtain that
\[
\frac{d }{d t} \log g(t) > \frac{2}{t} B \log B \left( \frac{B-A+1}{4AB-2A(A+B-1)} -\frac{2}{4AB-(A+B-1)^2} \right) .
\]
The last expression is positive as needed, since $\log B<0 $ and cross multiplying gives
\begin{align*}
& (B-A+1)(4AB-(A+B-1)^2)-2(4AB-2A(A+B-1)) \\
& = -4A(1-A)(1-B)-4AB(1-B) - (B-A+1)(A+B-1)^2 <0.
\end{align*}

To conclude, we check that $A \log A < B \log B$. Notice $B>(A+B)/2>1/2$. The function $x \mapsto x \log x$  is strictly increasing on the interval $[1/2,\infty)$ and so if $A\geq 1/2$ then $A \log A < B \log B$. Thus we may suppose from now on that $A<1/2$. Since $1/2<1-A<B$, it is enough to show $A \log A < (1-A)\log(1-A)$ when $A \in (0,1/2)$.

Let $c(x) = x \log x - (1-x) \log (1-x)$.
It is straightforward to check that $c''(x)>0$ on $(0,1/2)$, and so $c(x)$ is strictly convex. Since $c(0)=0=c(1/2)$, we conclude $c(x)<0$ on $(0,1/2)$. Hence $ c(A)<0$, as desired. 

\subsubsection*{\textsc{Step 6:} Conclusion} \
The relevant regions for the parameters $a$ and $b$ are graphed in \autoref{fig:triangleparametrization}, with arrows indicating how the ratio $R(a,b)$ increases. 

For the upper bound, clearly the ratio is strictly largest in the top right corner where $a=b=1$, meaning $T$ is a regular three-point set, with value $R(1,1)=(3/2)^{1/r-1/s}$. 

For the lower bound, we consider three cases corresponding to the diagrams in the figure. 

(i) (top) When $1\leq s < r$ the figure shows that $R(a,b)$ is smallest in the region where $a+b \geq 1 $ and $a^{s}+b^{s}\leq 1$, where it has the constant value $2^{1/r-1/s}$.

(ii) (bottom left) When $0<s<1< r$ the figure shows the ratio is smallest at some point on the line $a+b=1$, in which case the points of $T$ are collinear and $T$ can be regarded as a $1$-dimensional set. Applying \autoref{pr:onedim_lowermiddle} with $p$ and $q$ interchanged shows that the capacity ratio $R(a,b)$ is greater than the value $2^{1/r-1/s}$ for a two-point set.

(iii) (bottom right) When $0<s<r\leq1$ the figure shows that in the expanded region $a^r+b^r\geq 1$, $0\leq a,b \leq 1$, the function $R(a,b)$ is smallest in the limiting case of a two-point set ($a=1,b=0$, or $a=0,b=1$), with minimum value $2^{1/r-1/s}$.

\section{\bf Proof of \autoref{th:2deqtriangle}: planar sets when $p<0, q \leq-2$} \label{sec:2dtriangleproof}

First suppose $q<-2$. By the existence result \autoref{th:maximizer} with $n=2$, it is enough to prove \autoref{th:2deqtriangle} for two-point and three-point sets $T$. We will prove as desired that
\begin{equation} \label{eq:3ptsetmaxima}
\frac{\capq(T)}{\capp(T)}\leq 
\begin{cases}
(2/3)^{1/p-1/q}, & p<q<-2 , \\
2^{1/q-1/p}, & q<p<0, \quad q<-2 ,
\end{cases}
\end{equation}
with the maximum being attained by a regular three-point set in the first case and any two-point set in the second case. By interchanging $p$ and $q$ in the second case we see it is equivalent to $\capq(T)/\capp(T) \geq (1/2)^{1/p-1/q}$ when $p<q<0$ and $p<-2$. Thus the desired inequalities follow from \autoref{th:threeptratio}. 

Finally, if $q=-2$ the desired inequalities follow from letting $q \nearrow -2$ in \eqref{eq:3ptsetmaxima}, since $\capq(T)$ is a continuous function of $q<0$ \cite[Theorem 1.3]{CL24b}.

\subsection*{Nonuniqueness of equilibrium measures for \autoref{th:2deqtriangle}(b)}

With $p$ and $q$ suitably negative, \autoref{th:2deqtriangle}(b) shows that the capacity ratio is maximized by any two-point set, with equilibrium measure distributed equally at the two points. But that is not the only possibility. A maximizing set could support other equilibrium measures. For $q<p<-2$, we proceed to construct a set that attains the maximal capacity ratio while having two different $p$-equilibrium measures, which are supported at two points and three points respectively. The $q$-equilibrium measure in this example is supported at just two points. 

Let $r=-p$ and $s=-q$, so that $r,s>2$. Take $K = \{ x_0,x_1,x_2,x_3\} \subset \R^2$ to be the vertices of a ``kite'':
\[
x_0=(0,0),\,\, x_1=(1/2,\sqrt{3}/2), \,\, x_2=(-1/2,\sqrt{3}/2), \,\, x_3=(0,h),
\]
where the height $h=(4/3)^{1/r}$ is chosen so that two-point subset $\{x_0,x_3\}$ has the same $p$-capacity $(2/3)^{1/r}$ as the regular three-point subset $\{ x_0,x_1,x_2\}$, by \eqref{eq:kpoint} above. Note that $1<h<2/\sqrt{3}$ since $r>2$. We will show that these two- and three-point subsets provide the full $p$-capacity of $K$, so that that $\capp(K)=(2/3)^{1/r}$ and $K$ possess two distinct types of $p$-equilibrium measure. 

Any equilibrium measure is supported on at most three points, by \cite[Theorem 12]{B56}, since $n=2$ and $p<-2$. Thus to evaluate the capacity it suffices to consider all two- and three-point subsets of $K$. For the two-point sets, simply notice that the greatest distance between any pair of points is $h$, between $x_0$ and $x_3$. For the three point subsets, we have already considered the regular subset $\{ x_0,x_1,x_2\}$. The two remaining cases are $R=\{x_0,x_1,x_3\}$ (the right part of the kite) and $T = \{x_1,x_2,x_3\}$ (the top part of the kite). We will show using \autoref{th:trianglecapformula} that the $p$-capacities of $R$ and $T$ are less than or equal to $(2/3)^{1/r}$. 

The distances between points of $R$ are $1,l=(1/4+(h-\sqrt{3}/2)^2)^{1/2},h$. The middle distance $l$ is less than $1/\sqrt{3}$ since 
\[
l^2 < 1/4+(2/\sqrt{3}-\sqrt{3}/2)^2 = 1/3.
\]
Thus $h$ is the largest of the three distances, and since $r/2>1$ we have  
\[
1^r+l^r < 4/3 = h^r .
\]
Thus by \autoref{th:trianglecapformula}, the $p$-capacity is $2^{-1/r}(4/3)^{1/r}=(2/3)^{1/r}$ and so the subset $R$ has the same capacity as the two-point set $\{x_0,x_3\}$. As remarked after the theorem, the equilibrium measure of $R$ is supported at those two points and so does not provide any new type of equilibrium measure. 

For $T$, the distances between points are $l,l,1$. Since 
\[
l^r + l^r < \frac{2}{3} < 1^r ,
\]
\autoref{th:trianglecapformula} says that $\capp(T)=(1/2)^{1/r}$. This capacity is less than $(2/3)^{1/r}$ and so the subset $T$ does not yield the $p$-capacity of $K$. 

To show $K$ is a maximizer for the capacity ratio, we now compute the $q$-capacity of $K$. Again we need consider only two- and three-point subsets. The regular three-point set $\{x_0,x_1,x_2\}$ has $q$-capacity $(2/3)^{1/s}$, while the two-point set $\{x_0,x_3\}$ has $q$-capacity $2^{-1/s}(4/3)^{1/r}$. The two-point capacity is strictly larger, since $s>r$. And by the same calculations as used above for $p$, the sets $R$ and $T$ have $q$-capacity $2^{-1/s}(4/3)^{1/r}$ and $(1/2)^{1/s}$, respectively. Taking the largest of these numbers, we conclude the $q$-capacity is attained by the two-point subset, so that $\capq(K)=2^{-1/s}(4/3)^{1/r}$. 

Combining our findings, we see that $\capq(K)/\capp(K)=2^{1/r-1/s}$, which means $K$ is a maximizer for the capacity ratio in \autoref{th:2deqtriangle}(b), for $q<p<-2$.

\section{\bf Convergence of Riesz capacity wrt Hausdorff distance}
\label{sec:convergence}

For use in the next section, we now develop a convergence result for Riesz capacity when the sets converge in Hausdorff distance.  
\begin{proposition}[Hausdorff convergence and Riesz capacity] \label{pr:hausdorffdistance}
Let $n\geq 1$ and suppose $K_i$ is a sequence of compact sets in $\R^n$ that converges in Hausdorff distance to a compact, nonempty set $K$. If $p \geq 0$ then
\[
\limsup_{i \to \infty} \capp(K_i) \leq \capp(K).
\]
If $p<0$ then 
\[
\lim_{i \to \infty} \capp(K_i) = \capp(K).
\]
\end{proposition}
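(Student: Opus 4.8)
The plan is to pass to weak-$*$ limits of equilibrium measures and to exploit the different regularity of the interaction kernel according to the sign of $p$. Since $K_i\to K$ in Hausdorff distance and $K$ is compact, all but finitely many $K_i$ lie in a fixed compact set $\Omega$, so for any choice of ($p$-, or logarithmic) equilibrium measures $\mu_i$ on $K_i$ the family $\{\mu_i\}$ is tight; by Prokhorov's theorem some subsequence converges weakly-$*$ to a probability measure $\mu$. Writing $K_\e$ for the closed $\e$-neighborhood of $K$, one has $K_i\subset K_\e$ for large $i$, so by the Portmanteau theorem $\mu(K_\e)\geq \limsup_i \mu_i(K_\e)=1$; letting $\e\downarrow 0$ shows $\mu$ is supported on $K$.

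First I would establish the upper semicontinuity inequality, which yields the whole $p\geq 0$ statement and half of the $p<0$ statement. Pass to a subsequence realizing the relevant liminf/limsup of $V_p(K_i)$ and then, as above, to a weakly convergent subsequence of equilibrium measures with limit $\mu$ supported on $K$. When $p\geq 0$ the kernel (namely $|x-y|^{-p}$, or $\log\frac{1}{|x-y|}$ if $p=0$) is lower semicontinuous and bounded below on $\Omega\times\Omega$, so the principle of descent gives
\[
V_p(K)\leq \iint_{K\times K}|x-y|^{-p}\,d\mu\,d\mu\leq \liminf_i V_p(K_i);
\]
since $\capp$ is a continuous \emph{decreasing} function of the energy, this gives $\limsup_i \capp(K_i)\leq\capp(K)$. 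When $p<0$ the kernel $|x-y|^{-p}=|x-y|^{|p|}$ is bounded and \emph{continuous} on $\Omega\times\Omega$, so $\mu_i\times\mu_i\to\mu\times\mu$ weakly forces $V_p(K_i)\to\iint_{K\times K}|x-y|^{|p|}\,d\mu\,d\mu\leq V_p(K)$ along the subsequence, i.e. $\limsup_i V_p(K_i)\leq V_p(K)$.

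It remains to prove $\liminf_i V_p(K_i)\geq V_p(K)$ when $p<0$; this is the source of full continuity and I expect it to be the crux. Rather than take a limit, I would transport a $p$-equilibrium measure $\gamma$ of $K$ onto each $K_i$. Let $\pi_i\colon K\to K_i$ be a Borel nearest-point map, so $|\pi_i(x)-x|\leq \delta_i:=d_H(K,K_i)\to 0$ for all $x$. Then $(\pi_i)_*\gamma$ is a probability measure on $K_i$, so by maximality
\[
V_p(K_i)\geq \iint_{K\times K}|\pi_i(x)-\pi_i(y)|^{|p|}\,d\gamma(x)\,d\gamma(y).
\]
Since $\big||\pi_i(x)-\pi_i(y)|-|x-y|\big|\leq 2\delta_i\to 0$ uniformly and $t\mapsto t^{|p|}$ is continuous, the integrand converges uniformly to $|x-y|^{|p|}$, so the right-hand side tends to $V_p(K)$. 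Combined with the previous paragraph this gives $V_p(K_i)\to V_p(K)$, and continuity of $t\mapsto t^{-1/p}$ on $[0,\infty)$ (sending $0$ to $0$, which covers the degenerate case where $K$ is a singleton) upgrades this to $\capp(K_i)\to\capp(K)$.

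The main obstacle is the singular-kernel bookkeeping in the $p\geq 0$ case: one must confirm that the weak-$*$ limit is genuinely supported on $K$ and apply lower semicontinuity of energy for a kernel that blows up on the diagonal, which is precisely why only the $\limsup$ inequality survives there (continuity from below genuinely fails, e.g. for finite sets approximating a segment). For $p<0$ the delicate points---the measurable selection of $\pi_i$ and the uniform control of distances---are routine, and it is the boundedness and continuity of the kernel that make full continuity available.
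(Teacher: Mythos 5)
Your proposal is correct, and its overall skeleton matches the paper's: for the lower-semicontinuity direction (all $p$, giving the $\limsup$ statement for $p\geq 0$ and half of the $p<0$ statement) both you and the paper extract weak-$*$ limits of equilibrium measures, verify the limit measure is supported on $K$, and invoke the principle of descent for the lower semicontinuous kernel. The difference is in the remaining direction for $p<0$. You transport the equilibrium measure $\gamma$ of $K$ onto $K_i$ by a Borel nearest-point projection $\pi_i$ and use uniform continuity of $t\mapsto t^{|p|}$ on bounded distance ranges; the paper instead proves a symmetric statement --- for any \emph{continuous} kernel, the energy is continuous with respect to Hausdorff distance among compact subsets of a fixed compact set --- by decomposing an arbitrary measure on one set into finitely many pieces of small diameter and replacing each piece by a weighted point mass at a nearby point of the other set. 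The two transports accomplish the same thing, but the paper's finite discretization sidesteps the one point in your argument that genuinely requires justification, namely Borel measurability of the nearest-point selection (this is standard, e.g.\ via the lexicographically least nearest point or a measurable selection theorem, so it is a caveat rather than a gap; alternatively, replacing $\pi_i$ by a map that is constant on finitely many small Borel pieces of $K$ removes the issue entirely and essentially reproduces the paper's construction). A further small distinction: your transport goes only from $K$ to $K_i$, with the reverse inequality supplied by your weak-$*$ argument, whereas the paper's argument is symmetric in the two sets and therefore gives full energy continuity for continuous kernels in one stroke. Both routes are sound.
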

The ``$\limsup$'' result cannot be improved when $0 \leq p < n$. For example, if $K$ has positive $p$-capacity and $K_i$ is a finite subset that is $(1/i)$-dense in $K$ then $\dist(K_i,K) \leq 1/i \to 0$ but $\capp(K_i)=0$ for all $i$. The case $p \geq n$ is not interesting since the capacities are all zero (see \autoref{th:Rieszmonotonicity}(d)).  
\begin{proof}
Note that $K_i$ is nonempty for all large $i$, since the limiting set $K$ is nonempty. 

Consider the general situation of a symmetric, lower semicontinuous kernel $G(x,y)$ with energy 
\[
W(K) = \min_\mu \int_K \int_K G(x,y) \, d\mu d\mu 
\]
where $-\infty < W(K) \leq \infty$ and $\mu$ ranges over probability measures on the compact, nonempty set $K$; the minimum is known to be achieved for some equilibrium measure $\mu$, by \cite[Lemma 4.1.3]{BHS19}. We will prove $\liminf_{i \to \infty} W(K_i) \geq W(K)$, and that if $G$ is continuous then $\lim_{i \to \infty} W(K_i) = W(K)$. 

Assuming that conclusion has been established, we may apply it with kernel $G(x,y)$ equal to $|x-y|^{-p}$ when $p<0$, $\log 1/|x-y|$ when $p=0$, and $-|x-y|^{-p}$ when $p<0$, to obtain that $\liminf_{i \to \infty} V_p(K_i) \geq V_p(K)$ for $p \geq 0$ and $\lim_{i \to \infty} V_p(K_i) = V_p(K)$ for $p<0$, which proves the proposition. 

So suppose $L$ is a compact set that contains $K$ and all the $K_i$ and choose $\mu_i$ to be an equilibrium measure for $W(K_i)$. By the Helly selection principle, after passing to a subsequence, we may assume that $\mu_i$ converges weak-$*$ to some probability measure $\mu$ supported in $L$. This measure $\mu$ must in fact be supported in $K$, since if $U$ is an arbitrary open ball in the complement of $K$ and $\overline{U}\cap K$ is empty then the distance between $\overline{U}$ and $K$ is positive, and hence $\overline{U}\cap K_i$ is empty for all large $i$, which implies $\mu_i(U)=0$. Thus 
\begin{align*}
W(K) 
& \leq \int_K\int_K G(x,y) \, d\mu d\mu \\
& = \int_L\int_L G(x,y) \, d\mu d\mu \\
& \leq \liminf_{i\to \infty} \int_L\int_L G(x,y) \, d\mu_i d\mu_i \quad \text{by the Descent Principle \cite[Theorem 4.1.2]{BHS19}} \\
& =\liminf_{i\to \infty} W(K_i) ,
\end{align*}
as we wanted to show. 

Next suppose $G$ is continuous. Let $L \subset \Rn$ be compact and suppose $\e>0$. By continuity of the kernel and compactness of $L$ there exists $\delta>0$ such that 
\begin{equation} \label{eq:deltachoice}
|x-z| \leq 2\delta , \quad |y-z^\prime| \leq 2\delta , \quad x,y \in L \quad \Longrightarrow \quad G(x,y) \geq G(z,z^\prime) - \e .
\end{equation}
Consider now compact, nonempty subsets $A,B \subset L$ whose Hausdorff distance is less than $\delta$. We will show the energy $W(A)$ is bounded below by $W(B) - \e$. 
 
 Given a probability measure $\alpha$ on $A$, construct a probability measure $\beta$ on $B$ as follows: decompose $A$ into a finite, disjoint union of Borel sets $A_k$ each having diameter at most $\delta$; for each $k$, choose a point $a(k) \in A_k$ and some point $b(k) \in B$ that lies at most distance $\delta$ from $a(k)$; then define $\beta = \sum_k \alpha(A_k )\delta_{b(k)}$ to be a sum of weighted delta measures at those points, and note that $\beta$ is a probability measure on $B$ since $\sum_k \alpha(A_k)=\alpha(A)=1$. Since each point of $A_k$ lies at most distance $2\delta$ from $b(k)$, we see by the choice of $\delta$ in \eqref{eq:deltachoice} that 
\begin{align*}
\int_A \int_A G(x,y) \, d\alpha d\alpha 
& = \sum_{k,k^\prime} \int_{A_k} \int_{A_{k^\prime}} G(x,y) \, d\alpha d\alpha \\
& \geq \sum_{k,k^\prime} \int_{A_k} \int_{A_{k^\prime}} G\left( b(k),b(k^\prime) \right) d\alpha d\alpha - \e \\
& = \int_B \int_B G(x,y) \, d\beta d\beta - \e \geq W(B) - \e.
\end{align*}
Minimizing with respect to all measures $\alpha$ implies that $W(A) \geq W(B)-\e$. The same holds with $A$ and $B$ interchanged, and so the $W$-energy is continuous with respect to Hausdorff convergence of compact sets in $L$. In particular, returning to the original situation and taking $L$ large enough to contain $K$ and all the $K_i$, we conclude that $W(K_i) \to W(K)$ as $i \to \infty$. 
\end{proof}

\section{\bf Proof of \autoref{th:isodiametric}: capacitary Brunn--Minkowski and isodiametric inequalities} \label{sec:isodiametric}

The Brunn--Minkowski inequality for capacity is the key ingredient for proving \autoref{th:isodiametric}. It is known for $p=n-2$ and $p=n-1$: 
\begin{theorem}[Brunn--Minkowski for Riesz capacity; Borell \protect{\cite{B83,B84}}, Novaga--Ruffini \protect{\cite{NR15}}, Pommerenke \cite{Po59}] \label{th:capBM} Let $n \geq 2$. Suppose $A,B \subset \R^n$ are compact, convex and nonempty. If $p=n-2$ or $p=n-1$ then 
\[
\capp(\lambda A+(1-\lambda) B) \geq \lambda \capp(A) + (1-\lambda) \capp(B) , \qquad \lambda \in [0,1] .
\]
\end{theorem}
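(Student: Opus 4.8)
The plan is to reduce both admissible exponents to the single genuinely hard input, namely the Brunn--Minkowski inequality for Newtonian capacity. First, since $\capp(sK)=s\,\capp(K)$, the asserted inequality is exactly the statement that the length-normalized quantity $\capp$ is concave under Minkowski combination (this is the familiar $1/(m-2)$-power form in disguise), so no preliminary normalization is needed; I would fix $\lambda\in(0,1)$, the endpoints being trivial, and abbreviate $C=\lambda A+(1-\lambda)B$.

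The first move handles $p=n-1$ by a dimensional reduction. The Riesz energy $V_p(K)=\min_\mu\iint|x-y|^{-p}\,d\mu\,d\mu$ depends only on the pairwise distances within $K$, hence is \emph{intrinsic}: it is unchanged under an isometric re-embedding of $K$ into a larger Euclidean space. Viewing $A,B\subset\Rn$ as subsets of the hyperplane $\Rn\subset\Rnp$, their $(n-1)$-capacity therefore coincides with their Newtonian capacity in $\Rnp$, because the Newtonian exponent in dimension $n+1$ is precisely $(n+1)-2=n-1$. The hyperplane is closed under Minkowski combination, so $C$ again lies in $\Rn$ and the same identity of capacities holds for $C$. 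Thus the $p=n-1$ case is exactly the Brunn--Minkowski inequality for Newtonian capacity in $\Rnp$ (where $n+1\ge 3$), applied to the flat convex bodies $A,B$. After this reduction, the only remaining tasks are Newtonian Brunn--Minkowski in every dimension $m\ge 3$ (which simultaneously covers $p=n-2$ when $n\ge 3$) and, separately, the planar logarithmic case $n=2$, $p=0$.

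For the Newtonian case in $\R^m$, $m\ge 3$, I would argue through the capacitary potentials $u_A,u_B\colon\R^m\to[0,1]$, each harmonic off its set, equal to $1$ on it, decaying to $0$ at infinity with a far-field coefficient proportional to the capacity. Form the quasiconcave interpolation $u$ whose superlevel sets are the Minkowski combinations $\{u\ge t\}=\lambda\{u_A\ge t\}+(1-\lambda)\{u_B\ge t\}$ for $t\in(0,1]$, equivalently $u(\lambda x+(1-\lambda)y)\ge\min\bigl(u_A(x),u_B(y)\bigr)$. At $t=1$ this gives $\{u\ge 1\}=C$, so $u\equiv 1$ on $C$, while the classical Brunn--Minkowski inequality for volumes forces the superlevel sets of $u$ to grow at the correct rate.

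The main obstacle is the concavity step: one must show the interpolated potential $u$ is superharmonic in the exterior of $C$, so that the maximum principle makes $u$ dominate the true equilibrium potential $u_C$ there, whence comparing far-field asymptotics yields the capacity inequality. This superharmonicity is exactly Borell's delicate argument, and I expect essentially all of the potential-theoretic difficulty to reside here; the coarea and Gauss-variational route of Caffarelli--Jerison--Lieb is an alternative that shifts, but does not remove, this difficulty. Finally, the planar logarithmic case $n=2$, $p=0$ admits no local PDE and would instead be settled by the conformal-mapping and transfinite-diameter argument of Pommerenke, completing all cases.
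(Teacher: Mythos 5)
Your architecture is essentially the paper's own: \autoref{th:capBM} is a citation theorem there, with the analytic content delegated to Borell \cite{B83,B84} for $p=n-2$, Pommerenke \cite{Po59} for the planar logarithmic case, and with $p=n-1$ obtained by precisely your dimensional-reduction trick, which the paper attributes to Muratov, Novaga and Ruffini \cite{MNR18}: Riesz energy depends only on pairwise distances of points of $K$, so $\capnone$ of a set in $\Rn$ equals its Newtonian capacity when the set is regarded as a subset of $\Rnp$. Your sketch of the quasiconcave-interpolation proof of the Newtonian case is a reasonable description of the black box, which neither you nor the paper reproves.

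There is, however, a genuine gap, and it sits exactly where your reduction leans hardest. Borell's theorem (and Novaga--Ruffini's, and your sketched potential-theoretic argument) concerns \emph{convex bodies}, that is, convex sets with nonempty interior; the paper states this caveat explicitly. The sets to which you apply the Newtonian inequality in the $p=n-1$ step are copies of $A$, $B$, $C$ lying in the hyperplane $\Rn\subset\Rnp$, and these always have empty interior in $\Rnp$ --- your phrase ``flat convex bodies'' is an oxymoron, and the cited inequality does not apply to them as stated. The same problem occurs in the $p=n-2$ case whenever $A$ or $B$ is degenerate, which the hypotheses of the theorem permit (segments, points, lower-dimensional convex sets). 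The paper closes this hole with an approximation step that your proposal lacks: writing $A_\e=A+\B(0,\e)$ for the thickening by a small ball in the ambient space where the convex-body inequality is invoked, one notes $(\lambda A+(1-\lambda)B)_\e \supset \lambda A_\e+(1-\lambda)B_\e$, applies the convex-body inequality and monotonicity to get
\[
\capp\big( (\lambda A+(1-\lambda)B)_\e \big) \geq \lambda \capp(A) + (1-\lambda)\capp(B) ,
\]
and then lets $\e\to 0$, using the upper semicontinuity of $\capp$ (for $p\geq 0$) along Hausdorff convergence; that semicontinuity is \autoref{pr:hausdorffdistance}, which the paper proves separately. Until you add this thickening-and-limit argument (or prove the quasiconcavity machinery of \cite{B83} directly for flat sets, which the literature does not do), the $p=n-1$ case and the degenerate instances of $p=n-2$ remain unproved in your write-up.
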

The planar logarithmic case of the theorem, where $n=2$ and $p=0$, is due to Pommerenke \cite[Satz 2]{Po59}. He employed a conformal mapping technique to handle connected compact sets, which are not assumed convex. Borell found a potential theoretic method for convex sets in two and higher dimensions \cite{B83}, \cite[Example 7.3]{B84}, that covers the logarithmic and Newtonian cases, $p=n-2$. For $p=n-1$ the theorem was proved by Novaga and Ruffini \cite{NR15}. 

Strictly speaking, the latter three authors worked with convex bodies, that is, convex sets not contained in a lower dimensional space. To allow degenerate sets $A$ or $B$ in the theorem above, one may thicken each set by adding an $\e$-ball and then letting $\e\to0$, as we now explain. Write $A_{\e}=A+\B^n(0,\e)$ for the convex body that arises by $\e$-thickening of a convex set $A$. We have   
\begin{align*}
& \capp\big((\lambda A +(1-\lambda) B)_{\e}\big) \\
& \geq \capp(\lambda A _{\e}+(1-\lambda) B_{\e}) && \text{since $(\lambda A +(1-\lambda) B)_{\e} \supset \lambda A _{\e}+(1-\lambda) B_{\e}$} \\
& \geq \lambda \capp(A_{\e})+(1-\lambda) \capp(B_{\e}) && \text{by \autoref{th:capBM} for convex bodies} \\
& \geq \lambda \capp(A)+(1-\lambda) \capp(B).
\end{align*}
Taking $\e \to 0$ completes the proof for $A$ and $B$ since on the left side of the inequality,  
\[
\capp(\lambda A +(1-\lambda) B) \geq \limsup_{\e \to 0} \capp((\lambda A +(1-\lambda) B)_{\e}) 
\]
by the Hausdorff convergence result in \autoref{pr:hausdorffdistance}. 

Thus, as observed by Muratov, Novaga and Ruffini \cite[p.{\,}1064]{MNR18}, the result of Novaga and Ruffini for $p=n-1$ can be obtained directly from Borell's result: regard the convex sets $A,B\subset \Rn$ as being convex sets in $\Rnp$ and apply \autoref{th:capBM} with $p=(n+1)-2$ (that is, Borell's result on $\Rnp$). 

\smallskip
It is natural to suppose the Brunn--Minkowski inequality should hold for a wider range of $p$. Novaga and Ruffini raised such a conjecture for $0<p<n$, but in view of the symmetry breaking expected in \autoref{fig:pqdiagram}, the interval $n-2<p<n$ seems a safer bet:
\begin{conjecture}[Capacitary Brunn--Minkowski; Novaga and Ruffini \protect{\cite[Conjecture 3.3]{NR15}}] \label{conj:capBM} 
\autoref{th:capBM} continues to hold for $p \in (n-2,n)$. 
%Let $n \geq 2$. Suppose $A,B \subset \R^n$ are compact, convex and nonempty. If $p \in (n-2,n)$ then 
%\[
%\capp(\lambda A+(1-\lambda) B) \geq \lambda \capp(A) + (1-\lambda) \capp(B) , \qquad \lambda \in [0,1] .
%\]
%with equality if and only if $A$ and $B$ are homothetic.
\end{conjecture}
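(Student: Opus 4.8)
Since \autoref{conj:capBM} is an open problem, I can only propose a plausible line of attack rather than a complete argument. The plan is to extend Borell's potential-theoretic proof of the Newtonian case ($p=n-2$) to all exponents $p \in (n-2,n)$ by realizing the Riesz $p$-capacity as a weighted harmonic capacity in one additional dimension, via the Caffarelli--Silvestre extension. The key observation is that for $p \in (n-2,n)$ the Riesz kernel $|x-y|^{-p}$ is, up to a constant, the kernel of $(-\Delta)^{-s}$ on $\Rn$ with $s=(n-p)/2 \in (0,1)$. Hence the $p$-equilibrium potential $u_K$ of a convex body $K$ is the $s$-harmonic capacitary potential, with $u_K=1$ on $K$, $(-\Delta)^s u_K = 0$ on $\Rn\setminus K$, and $u_K \to 0$ at infinity. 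Lifting $u_K$ to its extension $U_K$ on the upper half-space $\Rn \times (0,\infty)$, which solves $\operatorname{div}(t^{1-2s}\nabla U_K)=0$ with boundary trace $u_K$, the $p$-capacity becomes a weighted Dirichlet energy $\int t^{1-2s}|\nabla U_K|^2$ up to normalization. This recasts the conjecture as a Brunn--Minkowski inequality for the capacity associated to the degenerate-elliptic operator $L_s = \operatorname{div}(t^{1-2s}\nabla\,\cdot\,)$, where the Minkowski interpolation $\lambda A + (1-\lambda)B$ acts only in the horizontal $\Rn$-directions and leaves the weight $t^{1-2s}$ untouched. Reassuringly, the two known cases sit at the ends of this family: $p=n-2$ gives $s=1$ (classical Borell), while $p=n-1$ gives $s=\tfrac12$, for which the weight is constant and the extension is the ordinary harmonic extension on $\Rnp_+$, exactly matching the dimension-lifting observation of Muratov, Novaga and Ruffini \cite{MNR18}. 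The conjecture then amounts to interpolating across all $s \in (0,1)$.

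With this reduction in place, I would follow the two-ingredient structure of Borell's argument. First, one needs the \emph{quasi-concavity} of the extended capacitary potential: each super-level set $\{U_K \geq c\}$ should be convex when $K$ is convex. Second, given convex bodies $A,B$ with extended potentials $U_A,U_B$, one builds the function $U_\lambda$ whose super-level sets are the horizontal Minkowski combinations $\lambda\{U_A \geq c\}+(1-\lambda)\{U_B \geq c\}$ and shows that $U_\lambda$ is an $L_s$-subsolution on the complement of the interpolated body; the comparison principle for $L_s$ then places $U_\lambda$ below the true extended potential, and substituting into the weighted energy yields concavity of $\capp$ along Minkowski interpolation. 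The degeneracy of the weight enters the subsolution computation only through the $t$-derivative terms, which are unaffected by horizontal Minkowski combination, so this second ingredient should go through routinely once convex level sets are available. As in the known cases, one also reduces from general convex sets to full-dimensional convex bodies by the $\varepsilon$-thickening argument already used after \autoref{th:capBM}, invoking the Hausdorff-continuity of capacity from \autoref{pr:hausdorffdistance} in the limit $\varepsilon \to 0$.

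The main obstacle is the first ingredient. For the classical Laplacian the convexity of super-level sets of the capacitary potential is exactly what powers Borell's method and is established by a constant-rank or concavity-maximum-principle argument; for the weighted operator $L_s$ with $s\in(0,1)$ the same conclusion is natural but far from routine, because the weight $t^{1-2s}$ degenerates (or blows up) precisely along $\{t=0\}$, where the body $K$ and its boundary data live, whereas the standard convexity machinery presumes uniform ellipticity and smooth coefficients up to the boundary. I therefore expect the crux to be a convexity principle for $L_s$-capacitary potentials that remains valid up to the degenerate hyperplane. A promising route is to regularize the weight, replacing $t^{1-2s}$ by a nondegenerate weight such as $(t+\varepsilon)^{1-2s}$, prove convexity of the level sets for the uniformly elliptic regularized problems by the classical methods, and then pass to the limit $\varepsilon\to 0$; controlling this limit, and in particular ensuring that convexity is not lost in the boundary layer near $\{t=0\}$, is where I anticipate the real difficulty to lie.
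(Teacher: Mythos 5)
\autoref{conj:capBM} is stated in the paper as an open problem (due to Novaga and Ruffini \cite{NR15}), and the paper supplies no proof of it, so there is nothing of the authors' to compare your attempt against; you correctly treat the statement as open, and what follows is an assessment of your program on its own terms. The reduction you describe is sound and, in fact, reconstructs what is essentially the known strategy rather than a new one: the identification $s=(n-p)/2\in(0,1)$, the weighted extension operator $\operatorname{div}(t^{1-2s}\nabla\,\cdot\,)$, and the endpoint checks ($s=1$ is Borell's Newtonian case \cite{B83}, while $s=\tfrac12$ has constant weight and recovers the harmonic extension to $\Rnp$, i.e.\ the dimension-lifting observation of Muratov, Novaga and Ruffini \cite{MNR18}) are all correct. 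Indeed, Novaga and Ruffini's proof of the $p=n-1$ case proceeds exactly through level-set convexity for the $1/2$-Laplacian --- that is the title of \cite{NR15} --- so your plan for general $s\in(0,1)$ is the natural extrapolation of their method, presumably close to what they had in mind when raising the conjecture. One normalization point to keep straight: \autoref{th:capBM} concerns the paper's degree-one-homogeneous capacity $\capp = V_p^{-1/p}$, so in the Dirichlet-energy picture the statement to be proved is concavity of the $1/(n-2s)$-th power of the variational capacity under Minkowski combination (the analogue of Borell's concavity of $\mathrm{cap}^{1/(n-2)}$); your phrase ``substituting into the weighted energy yields concavity of $\capp$'' glosses over this rescaling, and over the fact that in Borell's scheme the capacity is extracted from the asymptotics of the potential at infinity rather than by direct energy substitution.

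The genuine gap is the one you yourself name: quasi-concavity (convexity of superlevel sets) of the extension potential for the degenerate operator, uniformly up to the hyperplane $\{t=0\}$ where the body and all boundary data sit. This is not a technical nuisance to be dispatched at the end; it is the entire open content of the conjecture, and everything else in your outline is routine by comparison. Your proposed regularization of the weight by $(t+\e)^{1-2s}$ does not obviously help, because the classical level-set-convexity machinery (maximum-principle and constant-rank arguments, or the approach of Colesanti and Salani \cite{CS03} for variational $p$-capacities) requires structural hypotheses that the regularized problems satisfy only with constants degenerating as $\e\to 0$, and convexity is a pointwise property that can be destroyed in exactly the boundary layer where control is lost. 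So your proposal should be read as a correct framing of the problem with its central difficulty identified but untouched --- an appropriate state of affairs for a statement the paper itself labels a conjecture, but not a proof or the skeleton of one.
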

Taking the limit as $p \nearrow n$ would unify the family of capacitary Brunn--Minkowski inequalities with the classical Brunn--Minkowski inequality for volume, since volume appears as a limiting case of capacity (by \autoref{th:Rieszmonotonicity}(c)). 
% for which an excellent reference is Gardner \cite{G02}. 

\subsubsection*{Remark on equality cases}
When $p=n-2$, equality holds for convex bodies $A$ and $B$ if and only if they are homothetic, by work of Caffarelli, Jerison and Lieb \cite{CJL96}. When $p=n-1$, Muratov, Novaga and Ruffini \cite[Lemma 3.7]{MNR18} proved the equality case for $n=2$, by an argument that Qin and Zhang \cite{QZ24} observed extends easily to the case $n \geq 2$. We will not need these equality cases. 

\subsubsection*{Remark on variational capacities} Colesanti and Salani \cite{CS03} proved Brunn--Minkowski for the family of variational capacities, which are not the same as Riesz capacities except in the Newtonian case. 
%Their proof also covered equality cases. 
An endpoint limit by Colesanti and Cuoghi \cite{CC05} extends this approach to a ``variational logarithmic capacity'', which is not the same as the logarithmic capacity in this paper. 

\subsection*{Proof of \autoref{th:isodiametric}}
First we handle $p=-\infty$, that is when the denominator of the ratio is diameter. Let $0 \leq q < n$, and take $\mathcal{F}$ to be the family of convex compact subsets of $\Rn$ with fixed diameter $d>0$. Choose a sequence $\{ K_i \}$ in $\mathcal{F}$ such that $\capq(K_i) \to M = \sup \{ \capq(K) : K\in \mathcal{F} \}$. By the Blaschke selection theorem, after passing to a subsequence we may suppose $K_i$ converges in Hausdorff distance to some convex set $K$. This set $K$ belongs to the family $\mathcal{F}$ because diameter is continuous with respect to Hausdorff distance. Moreover, $M = \lim_{i \to \infty}\capq(K_i)\leq \capq(K)$ by \autoref{pr:hausdorffdistance}. Therefore $K$ achieves the supremum $M$. 

Now one follows the standard argument for deducing an isodiametric inequality from Brunn--Minkowski: the convex set $K^\prime=(1/2)(K-K)$ is contained in the ball $\overline{B}$ of radius $d/2$ (diameter $d$) centered at the origin and so 
\[
\capq(\overline{B})\geq \capq(K')\geq \capq(K) ,
\]
where the final step relies on the Brunn--Minkowski \autoref{th:capBM}, and hence is valid when $q=n-2$ or $n-1$. 

Next, we handle $-\infty< p \leq-2$, by writing the ratio as a product of two ratios
\[
\frac{\capq(K)}{\capp(K)}=\frac{\capq(K)}{\diam(K)}\, \frac{\diam(K)}{\capp(K)}.
\]
We showed in the first part of the proof that the first ratio is maximal for the ball. The second ratio, $\diam(K)/\capp(K)$ is maximal for a two-point set by \autoref{le:twopoint}, but a two-point set and a ball having the same diameter necessarily possess the same $p$-capacity when $p\leq-2$ (see \cite[Theorem 4.6.6]{BHS19}). Since both ratios on the right hand side are maximal for the ball, so is $\capq(K)/\capp(K)$. 

\emph{Note.} \autoref{le:twopoint} is proved independently in the next section.

\section{\bf Additional proofs}
\label{sec:additionalproofs}

The proofs in this section are relatively short and direct. The only results needed from elsewhere in the paper are the elementary diameter bound in \autoref{th:Rieszmonotonicity}(b) (taken from \cite{CL24b}), the Hausdorff convergence result in \autoref{pr:hausdorffdistance}, and the explicit formula for the capacity of a regular $(n+1)$-point set in \autoref{sec:threepoint}. 

\subsection*{Proof of \autoref{pr:onedim_upperleft}}
\hypertarget{proofof_pr:onedim_upperleft}{By rescaling, }we may suppose without loss of generality that $K$ has diameter $2$. Then by a translation we may assume $K \subset [-1,1] = \overline{\B}^1$. Hence 
\[
\capq(K) \leq \capq(\overline{\B}^1) = \frac{\capq(\overline{\B}^1)}{2} \diam(K) .
\]
Note $\capp(K)=2^{1+1/p} =\capp(\overline{\B}^1)$ by \autoref{th:Rieszmonotonicity}(b), since $p \leq -1$ and $K$ and $\overline{\B}^1$ each have diameter $2$. Thus the last inequality is equivalent to saying 
\[
\frac{\capq(K)}{\capp(K)} \leq \frac{\capq(\overline{\B}^1)}{\capp(\overline{\B}^1)} .
\]
Equality holds if and only if $\capq(K) = \capq(\overline{\B}^1)$. Since $-1<q<1$, the equilibrium measures are unique and so (since the capacities are equal) the measure for $K$ equals the one for the interval $\overline{\B}^1$. That measure is supported on the whole interval (see \cite[Appendix A]{CL24b}), and so we conclude that equality occurs if and only if $K=\overline{\B}^1$. 

\subsection*{Proof of \autoref{pr:onedim_lowermiddle}}
\hypertarget{proofof:pr:onedim_lowermiddle}{Note} $\capq(K)=2^{1/q} \diam(K)$ and $\capp(K) \geq 2^{1/p} \diam(K)$ by \autoref{th:Rieszmonotonicity}(b), since $q \leq -1$ and $p<0$. Combining these facts proves the inequalities in the proposition. 

If $K$ is a two-point set then $\capp(K) = 2^{1/p} \diam(K)$, by taking $k=2$ in the beginning of \autoref{sec:threepoint}. Thus equality holds in the proposition when $K$ is a two-point set. 

The remaining task is to show when $-1<p<0$ that if $\capp(K) = 2^{1/p} \diam(K)$ then $K$ is a two-point set, or equivalently that if $K$ contains at least three points then $\capp(K) > 2^{1/p} \diam(K)$, or $V_p(K)>\diam(K)^{-p}/2$.

Suppose $K$ contains at least three points. By translating, we may assume these points are $-a<0<b$, where $a+b=\diam(K)$. Further, we may rescale $K$ so that $a+b=1$. Using the trial measure $\mu_s=(1-s)\delta_{-a}/2+s\delta_0+(1-s)\delta_b/2$, where $s\in[0,1]$, we find that 
\[
V_p(K)\geq (1-s)s a^{-p}+(1-s)s b^{-p}+(1-s)^2/2.
\]
Note that $s=0$ corresponds to the energy of the two-point set $\{-a,b\}$ of diameter $1$, which equals $1/2$, and so it is enough to show the derivative of the right side is positive at $s=0$. That inequality is straightforward:
\begin{align*}
\left. \frac{d\ }{ds} \right|_{s=0} (1-s)s a^{-p}+(1-s)s b^{-p}+(1-s)^2/2 
& = a^{-p}+b^{-p}-1 \\
& >a+b-1=0 ,
\end{align*}
since $0<-p<1$ and $0<a,b<1$. 

\subsection*{Proof of \autoref{th:maximizer}}
\hypertarget{proofof:th:maximizer}{Let} 
\[
M = \sup \, \{ \capq(K)/\capp(K) : \text{$K \subset \Rn$ is compact with $\capp(K)>0$} \} 
\]
and take a supremizing sequence $K_i$, meaning the capacity ratio for $K_i$ approaches $M$ as $i \to \infty$. By scale invariance of the ratio, we may normalize by requiring $\capp(K_i)=1$ for all $i$. By translation, we may suppose each set contains the origin. The bound in \autoref{th:Rieszmonotonicity}(b)  ensures the diameter of $K_i$ is bounded above by $2^{-1/p}$, for all $i$, and so the selection theorem for compact sets \cite[Theorem 2.7.9]{W94} implies that after passing to a subsequence, the $K_i$ converge in Hausdorff distance to some nonempty compact set $K$. This set has $\capp(K)=1$ by \autoref{pr:hausdorffdistance} (since $p<0$) and the same proposition yields (whether $q < 0$ or $q \geq 0$) that $\capq(K) \geq \lim_{i \to \infty} \capq(K_i) = M$. Thus $K$ achieves the maximum capacity ratio $M$. 

(The same argument applies in the case where $q=n$ since Lebesgue measure is upper semicontinuous with respect to Hausdorff distance on compact sets, that is $\Vol_n(K)\geq \limsup_{i\to\infty} \Vol_n(K_i)$.)

Now suppose $p<0$ and $q<-2$, so that Bj\"{o}rck's result \cite[Theorem 12]{B56} says the $q$-equilibrium measure of $K$ is supported on at most $n+1$ points, each point being extreme for the convex hull of $K$. That is, there is some $k \leq n+1$ and $E=\{x_1,\dots, x_k\}\subset K$ such that $\capq(E)=\capq(K)$. We know $K$ contains more than one point, since $\capp(K)=1$, and so $\capq(K)>0$ and hence $\capq(E)>0$; thus $E$ also contains more than one point, meaning $k \geq 2$. By monotonicity with respect to containment, $\capp(K) \geq \capp(E)$. Therefore,
\[
M = \frac{\capq(K)}{\capp(K)}\leq \frac{\capq(E)}{\capp(E)} \leq M .
\] 
Thus equality holds and the $k$-point set $E$ is a maximizer. 

Lastly, suppose $p<-1$ and write $\widehat{K}$ for the convex hull of the maximizing set $K$. This convex hull contains $K$ and so has larger or equal capacity: $\capq(\widehat{K}) \geq \capq(K)$ and $\capp(\widehat{K}) \geq \capp(K)$. Further, a result by Bj\"{o}rck \cite[Theorem 9]{B56} says since $p<-1$ that if $\mu$ is any equilibrium measure for $\widehat{K}$ then $\mu$ is supported in the set of extreme points of $\widehat{K}$. Those extreme points necessarily lie in $K$ and so $\mu$ is supported in $K$, which implies that $\capp(\widehat{K}) \leq \capp(K)$. Thus we have shown 
\[
\capq(\widehat{K}) \geq \capq(K) \quad \text{and} \quad \capp(\widehat{K}) = \capp(K) .
\]
Hence the capacity ratio for $\widehat{K}$ is greater than or equal to the ratio for $K$, which means the convex set $\widehat{K}$ is  a maximizing set too, and must in fact have the same $q$-capacity as $K$. 

\subsection*{Proof of \autoref{pr:isodiamsimplex}} 
\hypertarget{proofof:pr:isodiamsimplex}{Suppose} $q<-2$, so that by another result of Bj\"{o}rck \cite[Theorem 12]{B56}, any equilibrium measure $\mu$ of $K$ is supported at finitely many distinct points $x_1,\dots,x_k \in K$, where $2\leq k\leq n+1$. Let $E=\{x_1,\dots,x_k\}$, so that $\capq(K)=\capq(E)$. Since $\diam(E)\leq \diam(K)$ we have
\begin{equation} \label{eq:equality0}
\frac{\capq(K)}{\diam(K)}\leq \frac{\capq(E)}{\diam(E)}.
\end{equation}
The measure $\mu$ can be written
\[
\mu=m_1\delta_{x_1}+\dots+m_{k}\delta_{x_{k}}
\]
where the $m_i$ are positive and sum to $1$. Let $T=\{y_1,\dots,y_{n+1}\} \subset \Rn$ be a regular $(n+1)$-point set, that is, the vertex set of a regular simplex in $\Rn$, and choose $T$ to have the same diameter as $E$. Consider the probability measure $\nu$ on $T$ given by
\[
\nu = m_1\delta_{y_1}+\dots+m_k \delta_{y_k} .
\]
Then
\begin{align}
V_q(E)= \int_E \int_E |x-y|^{-q}\, d\mu d\mu 
& =  \sum_{i\neq j} m_i m_j|x_i-x_j|^{-q} \notag \\
&  \leq \sum_{i\neq j} m_i m_j(\diam{E})^{-q} \label{eq:equality1} \\
& = \int_T\int_T|x-y|^{-q}\, d\nu d\nu \leq V_q(T). \label{eq:equality2}
\end{align}
Hence $\capq(E)\leq \capq(T)$ and $\diam(E)=\diam(T)$, which completes the proof that the ratio of $q$-capacity over diameter is maximal for a regular $(n +1)$-point set, when $q<-2$. 

For the equality statement, suppose $K$ is a maximizer. Equality must hold in \eqref{eq:equality2} and so uniqueness of the equilibrium measure for the regular $(n+1)$-point set in \autoref{sec:threepoint} implies that $m_i=1/(n+1)$ for $i=1,\ldots,n+1$. Hence the set $E$ contains $k=n+1$ points. Equality must hold in \eqref{eq:equality1} and so $|x_i-x_j|=\diam(E)$ for all $i \neq j$, meaning $E$ is a regular $(n+1)$-point set. Note that $E$ and $K$ have the same diameter, since equality holds in \eqref{eq:equality0}. Thus the equilibrium measure $\mu$ with which we began the proof is the uniform probability measure on the vertices of a regular simplex having the same diameter and $q$-capacity as $K$. 

Now suppose $q=-2$. Let $T$ be a regular $(n+1)$-point set having the same diameter as $K$ and observe that 
\begin{align*}
\capnegtwo(K) 
& \leq \capq(K) && \text{when $q < -2$, by montonicity in \autoref{th:Rieszmonotonicity}(a),} \\
& \leq \capq(T) && \text{by the result proved above for $q<-2$} \\
& \to \capnegtwo(T) 
\end{align*}
as $q \nearrow -2$, using the formula from \autoref{sec:threepoint} that $\capq(T)=(n/(n+1))^{-1/q} \diam(T)$. For the equality statement when $q=-2$, we refer to Lim and McCann \cite[Corollary 2.2]{LM21}. 

\subsection*{Proof of \autoref{le:twopoint}}
\hypertarget{proofof:le:twopoint}{Suppose} $K \subset \Rn$ is compact and contains more than one point. Choose diametral points $a,b \in K$, meaning $|a-b|=\diam(K)$. The two-point set $\{a,b\}$ has the same diameter as $K$ while its $p$-capacity is at most that of $K$. Hence the ratio $\diam(K)/\capp(K)$ is less than or equal to the corresponding ratio for the two-point set. Each two-point set gives the same ratio, by the translation, rotation, and scaling properties of capacity. 

\subsection*{Proof of \autoref{le:maxinfty}}
\hypertarget{proofof:le:maxinfty}{Let} $0 \leq p < n$ and suppose $q<p$. If $q<0$ then we may simply take $K$ to be a set containing two points, so that $\capq(K)>0$ and $\capp(K)=0$, and of course $\Vol_n(K)=0$. 

If instead $q \geq 0$ then we let $d \in (q,p)$ and choose $K$ to be a compact set with Hausdorff dimension $d$: see for example the construction in Mattila \cite[Section 4.12]{M95}. This set has $\capq(K)>0, \diam(K)>0$, and $\capp(K)=0$, by \cite[Theorems 4.3.1 and 4.3.3]{BHS19}, while $\Vol_n(K)=0$ because $d<p<n$.

\section*{Acknowledgments}
Laugesen's research was supported by grants from the Simons Foundation (\#964018) and the National Science Foundation ({\#}2246537). Jeremy Tyson helpfully provided references regarding Hausdorff measure, and Jie Xiao pointed us to recent work on capacitary Brunn--Minkowski.

\appendix

\section{\bf Potential theoretic facts}
\label{sec:background}

A set $Z \subset \Rn$ is said to have inner $p$-capacity zero if $\capp(Z^\prime)=0$ for every compact $Z^\prime \subset Z$. 
\begin{lemma}[Inner $p$-capacity zero implies measure zero] \label{le:Zmzero}
Let $0 \leq p < n$. If $Z \subset \Rn$ is a measurable set with inner $p$-capacity zero then $\Vol_n(Z) = 0$. 
\end{lemma}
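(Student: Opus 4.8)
The plan is to argue by contraposition: I will show that if $\Vol_n(Z)>0$, then $Z$ must contain a compact subset of positive $p$-capacity, contradicting the hypothesis that $Z$ has inner $p$-capacity zero. The entry point is inner regularity of Lebesgue measure, which (since $Z$ is measurable) furnishes a compact subset $Z^\prime \subset Z$ with $\Vol_n(Z^\prime)>0$. Being compact, $Z^\prime$ is bounded, say $Z^\prime \subset \B^n(0,R)$. The goal then reduces to showing that a bounded compact set of positive volume has positive $p$-capacity for $0 \le p < n$.

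To establish this I would test the energy against the normalized restriction of Lebesgue measure, namely the probability measure $\mu = \Vol_n(Z^\prime)^{-1}\,\Vol_n|_{Z^\prime}$. Recall from the definitions that when $p \geq 0$ the capacity is positive precisely when the energy is finite, so it suffices to bound the energy of $\mu$ from above. For $0<p<n$ this amounts to checking that
\[
\int_{Z^\prime}\int_{Z^\prime} |x-y|^{-p}\,dx\,dy < \infty .
\]
Fixing $x$ and enlarging the inner domain to the ball $\{\,y : |x-y|\le 2R\,\}$, a passage to polar coordinates reduces the inner integral to a constant multiple of $\int_0^{2R} r^{\,n-1-p}\,dr$, which converges \emph{exactly because} $p<n$. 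Thus the inner integral is bounded uniformly in $x$, the double integral is at most a constant times $\Vol_n(Z^\prime)<\infty$, and the $p$-energy of $\mu$ is finite; hence $\capp(Z^\prime)>0$. The borderline case $p=0$ is handled identically with the logarithmic kernel: one checks that $\int_0^{2R} r^{\,n-1}\log(1/r)\,dr$ converges (the integrand is integrable at the origin for every $n\ge 1$), so $V_{log}(Z^\prime)<\infty$ and therefore $\capzero(Z^\prime)=\exp(-V_{log}(Z^\prime))>0$.

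In either case $Z^\prime$ is a compact subset of $Z$ with positive $p$-capacity, which contradicts the assumption that $Z$ has inner $p$-capacity zero; consequently $\Vol_n(Z)=0$. I do not anticipate a genuine obstacle here: the entire argument is formal apart from the convergence of the radial integral, and that single integrability check is precisely the point at which the hypothesis $p<n$ is used. The only care needed is to separate the $p=0$ logarithmic case from the $p>0$ power case and to record the uniform-in-$x$ bound honestly.
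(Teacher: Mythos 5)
Your proposal is correct and follows essentially the same route as the paper's proof: the contrapositive via inner regularity of Lebesgue measure, followed by using normalized Lebesgue measure on the compact subset $Z^\prime$ as a trial measure for the energy. The paper leaves the finiteness of that trial energy implicit, whereas you verify it explicitly with the polar-coordinate bound $\int_0^{2R} r^{n-1-p}\,dr < \infty$ (and its logarithmic analogue for $p=0$), which is exactly where the hypothesis $p<n$ enters.
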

\begin{proof}
We establish the contrapositive. If $\Vol_n(Z)>0$ then $\Vol_n(Z^\prime)>0$ for some compact subset $Z^\prime \subset Z$ by inner regularity, and so $\capp(Z^\prime)>0$ by using normalized Lebesgue measure on $Z^\prime$ as a trial measure for the energy. Hence $Z$ does not have inner $p$-capacity zero. 
\end{proof}
\begin{lemma}[Inner $p$-capacity zero does not alter capacity] \label{le:Z}
Let $p \geq 0$ and $n \geq 1$. Suppose $K,Z \subset \Rn$ and that $K$ and $K \cup Z$ are compact. If $Z$ has inner $p$-capacity zero then 
\[
\capp(K) = \capp(K \cup Z) .
\] 
\end{lemma}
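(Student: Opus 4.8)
The plan is to prove the two inequalities $\capp(K) \le \capp(K \cup Z)$ and $\capp(K) \ge \capp(K \cup Z)$ separately. The first is immediate from monotonicity of capacity with respect to set inclusion, since $K \subset K \cup Z$. For the reverse inequality I would recall that capacity is a decreasing function of energy (for $p>0$ one has $\capp = V_p^{-1/p}$, and for $p=0$ one has $\capp = \exp(-V_{log})$), so it suffices to show $V_p(K) \le V_p(K \cup Z)$ when $p>0$, and $V_{log}(K) \le V_{log}(K \cup Z)$ when $p=0$. If every probability measure on $K \cup Z$ has infinite energy then the right-hand side is $+\infty$ and there is nothing to prove (this also covers the degenerate cases where $K$ or $K \cup Z$ is empty), so I may assume there exist probability measures of finite energy on $K \cup Z$.

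The heart of the argument is the claim that \emph{every} probability measure $\nu$ on $K \cup Z$ of finite energy assigns zero mass to the set $B = (K \cup Z) \setminus K$. Since $B \subset Z$, every compact subset $C \subset B$ is a compact subset of $Z$, and hence has $\capp(C) = 0$ by the hypothesis that $Z$ has inner $p$-capacity zero; equivalently $V_p(C) = +\infty$ (and $V_{log}(C) = +\infty$ when $p=0$), so every probability measure on $C$ has infinite energy. Now if $\nu(C) > 0$, then the normalized restriction $\nu_C = \nu|_C / \nu(C)$ would be a probability measure on $C$ whose energy is bounded by $\nu(C)^{-2}$ times the (finite) energy of $\nu$ --- here, when $p=0$, I would first add a constant to the kernel, using that $\log(1/|x-y|)$ is bounded below on the bounded set $K \cup Z$, so that the integral over $C \times C$ is controlled by the integral over $(K \cup Z)^2$. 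This finite energy contradicts $V_p(C) = +\infty$, forcing $\nu(C) = 0$ for every compact $C \subset B$. By inner regularity of the finite Borel measure $\nu$, we conclude $\nu(B) = 0$.

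Granting the claim, the proof finishes quickly. For a finite-energy probability measure $\nu$ on $K \cup Z$ we have $\nu(\Rn \setminus K) = \nu(B) = 0$, so $\nu(K) = 1$ and $\nu$ may be regarded as a probability measure on $K$ with the same energy. Hence the energy of $\nu$ is at least $V_p(K)$ (respectively $V_{log}(K)$), and taking the infimum over all finite-energy $\nu$ yields $V_p(K \cup Z) \ge V_p(K)$, as required. I expect the main obstacle to be the claim in the second paragraph: the implication that a measure of finite energy cannot charge a set of inner capacity zero is the one genuinely potential-theoretic input, and the reduction from the Borel set $B$ to its compact subsets via inner regularity, together with the sign bookkeeping for the logarithmic kernel at $p=0$, are the points requiring care.
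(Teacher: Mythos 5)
Your proof is correct and follows essentially the same route as the paper's: both reduce to showing that any finite-energy probability measure on $K \cup Z$ assigns zero mass to the part outside $K$, by combining inner regularity of the measure with the observation that a normalized restriction to a positive-mass compact subset of $Z$ would have finite energy, contradicting inner $p$-capacity zero. Your write-up is if anything slightly more explicit than the paper's, notably in the sign bookkeeping for the logarithmic kernel when $p=0$, which the paper covers only with ``one argues the same way except with logarithmic energy.''
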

\begin{proof}
We may suppose $K$ and $Z$ are disjoint, by replacing $Z$ with the smaller set $Z \setminus K$. It suffices to show $\capp(K \cup Z) \leq \capp(K)$, since the other direction is immediate. We may suppose $\capp(K \cup Z)$ is positive.

We want to show $V_p(K \cup Z) \geq V_p(K)$. First consider $p>0$, and let $\mu$ be any probability measure on $K \cup Z$ with finite $p$-energy. We claim $\mu(Z)=0$. Suppose $\mu(Z)>0$, so that $\mu(\Rn \setminus K)>0$ and hence the open set $\Rn \setminus K$ contains some compact subset $\widetilde{Z}$ with positive $\mu$-measure. Then the compact set 
\[
Z^\prime=\widetilde{Z} \cap (K \cup Z) = \widetilde{Z} \cap Z \subset Z
\]
has $\mu(Z^\prime)=\mu(\widetilde{Z})>0$. Consequently $Z^\prime$ has finite $p$-energy and so positive $p$-capacity, contradicting that $Z$ has inner capacity zero. Hence $\mu(Z)=0$ as claimed and so $\mu$ is supported on $K$, so that $V_p(K \cup Z) \geq V_p(K)$ as desired. 

When $p=0$, one argues the same way except with logarithmic energy.
\end{proof}

\bibliographystyle{plain}

\begin{thebibliography}{99}

\bibitem{B19}
A. Baernstein II, 
Symmetrization in Analysis, 
With David Drasin and Richard S. Laugesen. With a foreword by Walter Hayman. New Mathematical Monographs, 36. Cambridge University Press, Cambridge, 2019.

\bibitem{B04a}
D. Betsakos, 
\emph{Symmetrization, symmetric stable processes, and Riesz capacities}, 
Trans. Amer. Math. Soc. 356 (2004), 735--755.

\bibitem{B04b}
D. Betsakos, 
\emph{Addendum to: ``Symmetrization, symmetric stable processes, and Riesz capacities''}, 
Trans. Amer. Math. Soc. 356 (2004), 3821.

\bibitem{B56}
G. Bj\"{o}rck, 
\emph{Distributions of positive mass, which maximize a certain generalized energy integral}, 
Ark. Mat. 3 (1956), 255--269.

\bibitem{B83}
C. Borell,
\emph{Capacitary inequalities of the Brunn--Minkowski type},
Math. Ann. 263 (1983), 179--184.

\bibitem{B84}
C. Borell,
\emph{Hitting probabilities of killed brownian motion: a study on geometric regularity},
Ann. Sci. \'{E}c. Norm. Sup\'{e}r. 17 (1984), 451--467.

\bibitem{BHS19}
S. V. Borodachov, D. P. Hardin and E. B. Saff, 
Discrete Energy on Rectifiable Sets. Springer Monographs in Mathematics. Springer, New York, 2019.

\bibitem{BCH20}
A. Burchard, R. Choksi and E. Hess-Childs, 
\emph{On the strong attraction limit for a class of nonlocal interaction energies}, 
Nonlinear Anal. 198 (2020), 111844, 12 pp.

\bibitem{CJL96}
L. A. Caffarelli, D. Jerison and E. H. Lieb,
\emph{On the case of equality in the Brunn--Minkowski inequality for capacity},
Adv. Math. 117 (1996), 193--207.

\bibitem{CFP17}
J. A. Carrillo, A. Figalli and F. S. Patacchini, 
\emph{Geometry of minimizers for the interaction energy with mildly repulsive potentials},
Ann. Inst. H. Poincaré C Anal. Non Linéaire 34 (2017), 1299--1308.

\bibitem{CL24b}
C. Clark and R. S. Laugesen, 
\emph{Riesz capacity: monotonicity, continuity, diameter and volume}. 
Preprint. \arxiv{2406.10781}

\bibitem{CC05}
A. Colesanti and P. Cuoghi,
\emph{The Brunn-Minkowski inequality for the n-dimensional logarithmic capacity of convex bodies},
Potential Anal. 22 (2005), 289--304.

\bibitem{CS03}
A. Colesanti and P. Salani,
\emph{The Brunn--Minkowski inequality for p-capacity of convex bodies},
Math. Ann. 327 (2003), 459--479.

\bibitem{DLM22}
C. Davies, T. Lim and R. J. McCann, 
\emph{Classifying minimum energy states for interacting particles: spherical shells},
SIAM J. Appl. Math. 82 (2022), 1520–1536.

\bibitem{DLM23}
C. Davies, T. Lim and R. J. McCann, 
\emph{Classifying minimum energy states for interacting particles: regular simplices}, 
Comm. Math. Phys. 399 (2023), 577–598.

\bibitem{FL18}
R. L. Frank and E. H. Lieb,
\emph{A ``liquid-solid'' phase transition in a simple model for swarming, based on the ``no flat-spots'' theorem for subharmonic functions},
Indiana Univ. Math. J. 67 (2018), 1547–1569.

\bibitem{HK76}
W. K. Hayman and P. B. Kennedy, 
Subharmonic Functions. Vol. I. London Mathematical Society Monographs, No.{\,} 9. Academic Press (Harcourt Brace Jovanovich, Publishers), London--New York, 1976.

\bibitem{H89}
W. K. Hayman, 
Subharmonic Functions. Vol. 2. London Mathematical Society Monographs, 20. Academic Press, Inc. [Harcourt Brace Jovanovich, Publishers], London, 1989.

\bibitem{J96}
D. Jerison, 
\emph{A Minkowski problem for electrostatic capacity}, 
Acta Math. 176 (1996), 1--47.

\bibitem{L72}
N. S. Landkof, 
Foundations of Modern Potential Theory. Translated from the Russian by A. P. Doohovskoy. Die Grundlehren der mathematischen Wissenschaften, Band 180. Springer--Verlag, New York--Heidelberg, 1972.

\bibitem{L22}
R. S. Laugesen, 
\emph{Minimizing capacity among linear images of rotationally invariant conductors}, 
Anal. Math. Phys. 12, 21 (2022). 

\bibitem{LM21}
T. Lim and R. J. McCann,
\emph{Isodiametry, variance, and regular simplices from particle interactions},
Arch. Ration. Mech. Anal. 241 (2021), 553--576.

\bibitem{M95}
P. Mattila, 
Geometry of Sets and Measures in Euclidean spaces. Fractals and Rectifiability. 
Cambridge Stud. Adv. Math., 44, Cambridge University Press, Cambridge, 1995.

\bibitem{MH06}
P. J. M\'{e}ndez-Hern\'{a}ndez, 
\emph{An isoperimetric inequality for Riesz capacities},  
Rocky Mountain J. Math. 36 (2006), 675--682. 

\bibitem{MNR18}
C. B. Muratov, M. Novaga and B. Ruffini,
\emph{On equilibrium shape of charged flat drops},
Comm. Pure Appl. Math. 71 (2018), 1049–1073.

\bibitem{NR15}
M. Novaga and B. Ruffini,
\emph{Brunn--Minkowski inequality for the $1$-Riesz capacity and level set convexity for the $1/2$-Laplacian},
J. Convex Anal. 22 (2015), 1125–1134.

\bibitem{PS45}
G. P\'{o}lya and G. Szeg\H{o}, 
\emph{Inequalities for the capacity of a condenser}, 
Amer. J. Math. 67 (1945), 1--32.

\bibitem{PS51}
G. P\'{o}lya and G. Szeg\H{o}, 
Isoperimetric Inequalities in Mathematical Physics. 
Annals of Mathematics Studies, No.\ 27, Princeton University Press, Princeton, N.J., 1951.

\bibitem{Po59}
C. Pommerenke, 
\emph{\"{U}ber die Kapazit\"{a}t der Summe von Kontinuen}, 
Math. Ann. 139 (1959), 127--132.

\bibitem{QZ24}
L. Qin and L. Zhang,
\emph{A class of overdetermined problem for fractional capacity}. 
Preprint, \arxiv{2406.15748}.

\bibitem{R89}
R. S. Rumely, 
Capacity Theory on Algebraic Curves. Lecture Notes in Math., 1378, 
Springer--Verlag, Berlin, 1989. 

\bibitem{SZ04}
A. Y. Solynin and V. A. Zalgaller, 
\emph{An isoperimetric inequality for logarithmic capacity of polygons}, 
Ann. of Math. (2) 159 (2004), 277--303.

\bibitem{S31}
G. Szeg\H{o}, 
\emph{\"{U}ber einige neue Extremaleigenschaften der Kugel}, 
Math. Z. 33 (1931), 419--425.

\bibitem{S45}
G. Szeg\H{o}, 
\emph{On the capacity of a condenser}, 
Bull. Amer. Math. Soc. 51 (1945), 325--350. 

\bibitem{S52}
G. Szeg\H{o}, 
\emph{Two applications of conformal mapping to the theory of Newtonian potential}, 
Tech. Rep. no.{\,}24, Stanford Univ., 1952. 

\bibitem{S56}
G. Szeg\H{o}, 
\emph{Relations between different capacity concepts.} 
In: Proceedings of the Conference on Differential Equations (dedicated to A. Weinstein), pp.{\,}139--145.
University of Maryland Book Store, College Park, Md., 1956. 

\bibitem{T05}
G. Tee, 
\emph{Surface area and capacity of ellipsoids in n dimensions}, 
New Zealand J. Math. 34 (2005), 165--198.

\bibitem{W83}
T. Watanabe, 
\emph{The isoperimetric inequality for isotropic unimodal L\'{e}vy processes},  
Z. Wahrsch. Verw. Gebiete 63 (1983), 487--499. 

\bibitem{W94}
R. Webster, Convexity. Oxford Sci. Publ.
The Clarendon Press, Oxford University Press, New York, 1994.

\bibitem{X17}
J. Xiao, 
\emph{$P$-capacity vs surface-area}, 
Adv. Math. 308 (2017), 1318--1336.

\end{thebibliography}

\end{document}